\documentclass[article]{amsart}
\usepackage{amssymb,amsfonts}
\usepackage[all,arc]{xy}
\usepackage{enumerate}
\usepackage{mathrsfs}
\usepackage{hyperref}
\usepackage{wasysym}
\usepackage{mathtools}
\usepackage{setspace}
\usepackage[style=alphabetic,sorting=nyt]{biblatex}
\addbibresource{Sources.bib}

\newtheorem{thm}{Theorem}[section]
\newtheorem{cor}[thm]{Corollary}
\newtheorem{prop}[thm]{Proposition}
\newtheorem{lem}[thm]{Lemma}

\theoremstyle{definition}
\newtheorem{defn}[thm]{Definition}
\newtheorem{defns}[thm]{Definitions}

\newtheorem{exmp}[thm]{Example}

\newtheorem{rem}[thm]{Remark}

\newtheorem{assump}[thm]{Assumption}

\newcommand{\mycomment}[1]{}
\newcommand{\cal}{\mathcal}
\newcommand{\bb}{\mathbb}

\DeclareMathOperator{\Spec}{Spec}
\DeclareMathOperator{\Spf}{Spf}

\DeclareMathOperator{\Pic}{Pic}

\DeclareMathOperator{\Cl}{Cl}

\DeclareMathOperator{\chara}{char}
\DeclareMathOperator{\Aut}{Aut}

\DeclareMathOperator{\et}{\acute{e}t}
\allowdisplaybreaks

\makeatletter
\let\c@equation\c@thm
\makeatother
\numberwithin{equation}{section}

\title{Extendability of group actions on K3 or Enriques surfaces}

\author{Tianchen Zhao}
\onehalfspacing

\begin{document}
\maketitle
\begin{abstract}
    Let $X$ be a K3 or Enriques surface with good reduction. Let $G$ be a finite group acting (not necessarily linearly) on $X$. We give a criterion for this group action to extend to a smooth model of $X$ in terms of the action of $G$ on the second $\ell$-adic cohomology groups. In particular, we generalize the result on the extendability of Galois actions on K3 surfaces by Chiarellotto, Lazda, and Liedtke. As an application, we prove that a symplectic linear group action is extendable if the residue characteristic does not divide its order. Lastly, we relate the good reduction of Enriques surfaces with that of their K3 double covers.
\end{abstract}

\setcounter{tocdepth}{1}
\tableofcontents

\section{Introduction}

Throughout the whole article, we fix the following notations.
$$\begin{array}{ll}
    K & \mbox{a complete and discretely valued field} \\
    \cal{O}_K & \mbox{the ring of integers of $K$} \\
    k & \mbox{the residue field of $K$, which we assume to be perfect of characteristic $p$} \\
    K^{\mathrm{s}} & \mbox{a fixed separable closure of $K$} \\
    \overline{k} & \mbox{the residue field of $K^{\mathrm{s}}$, an algebraic closure of $k$} \\
    G_{E/F} & \mbox{the Galois group of $E/F$, a Galois extension} \\
    I_{E/F} & \mbox{the inertia group of $E/F$, a Galois extension of complete and} \\ & \mbox{discretely valued fields} \\
    G_F & \mbox{an absolute Galois group of a field $F$} \\
    I_F & \mbox{an absolute inertia group of a complete and discretely valued field $F$}
\end{array}$$

\begin{defns}
    Let $X$ be a smooth proper variety over $K$.
    \begin{itemize}
        \item A model of $X$ is an proper flat algebraic space over $\cal{O}_K$ whose generic fibre is isomorphic to $X$.
        \item We say $X$ has good reduction if $X$ has a smooth model. Also, we say $X$ has potential good reduction if $X_L$ has good reduction for some finite extension $L/K$.
    \end{itemize}
\end{defns}

In this article, we study when a group action on a smooth proper variety extends to a smooth model. In particular, the group actions are not assumed to be linear over the ground field unless explicitly stated otherwise.

\begin{defn}
    Let $X$ be a smooth proper variety over $K$ with good reduction. Let $G$ be a group acting on $X$. We say the $G$-action on $X$ is extendable if the $G$-action extends to some smooth model $\cal{X}$ of $X$.
\end{defn}

Note that we only require the action to extend to some smooth model rather than a particular smooth model.

\begin{exmp}
    Let $A$ be an abelian variety over $K$ with good reduction. Its smooth model coincides with the N\'eron model by \cite[Corollary 1.4]{Neron_model_Artin}. Then every group action on $X$ is extendable by the N\'eron mapping property.
\end{exmp}

\subsection{Good reduction and the extendability problem}

The extendability of group actions plays an important role in the good reduction problem.

In \cite{liedtke_2017_good_reduction_k3surfaces}, Liedtke and Matsumoto prove that if a K3 surface $X$ over $K$ has good reduction after a finite and separable extension $L/K$ and $H^2_{\et}(X_{K^{\text{s}}},\bb{Q}_\ell)$ is unramified, then $X$ has good reduction after a finite and unramified Galois extension. The idea of the proof is as follows. By passing to the Galois closure, we assume $L/K$ is Galois. We find a smooth model $\cal{X}$ of $X_L$, such that the action of $I_{L/K}$ on $X_L$ extends to $\cal{X}$. Then $\cal{X}/I_{L/K}$ gives a smooth model of $X$ over the maximal unramified sub-extension of $L/K$.

Another example is in \cite{Chiarellotto_2019_neron_ogg_shafarevich_criterion_k3surface}. Let $X$ be a K3 surface over $K$ with good reduction after a finite and unramified extension $L/K$. Again, we may assume $L/K$ is Galois. Chiarellotto, Lazda, and Liedtke derive a criterion for the extendability of the action of $G_{L/K}$ on $X_L$. If the action does extend to $\cal{X}$, then $\cal{X}/G_{L/K}$ gives a smooth model of $X$.

In addition to the extendability of Galois actions, the extendability of linear group actions also has applications to the good reduction problem. See Corollary \ref{good reduction of enriques surfaces} for an example.

\subsection{Main results}

We focus on the case where $X$ is a K3 or Enriques surface. In particular, for any smooth model $\cal{X}$ of $X$, its special fibre $\cal{X}_k$ is determined up to isomorphism by $X$ by \cite[Proposition 4.7]{liedtke_2017_good_reduction_k3surfaces}. We call it the canonical reduction of $X$, denoted by $X^\circ$. This is also a K3 or Enriques surface depending on $X$. Moreover, there is a canonical homomorphism
    $$\mathrm{sp}:\Aut(X)\to\Aut(X^\circ)\ \ \ g\mapsto g_k,$$
called the specialization homomorphism. Any group action on $X$ induces a group action on $X^\circ$ via specialization. See Section \ref{specialization homomorphism subsection} for more details.

Let $G$ be a finite group acting on $X$ on the right. This induces a $G$-action on $K$ on the left whose fixed field is $K_0=K^G$ (we call this the base field of the $G$-action as in Definition \ref{base field}). We restrict to the case where $K/K_0$ is finite and unramified.

\begin{rem}
    We will see why we want $G$ to act on the right in Lemma \ref{ugly cocycle lemma}. This assumption is merely a formality. Indeed, suppose $G$ acts on $X$ on the left via $\rho:G\to \Aut(X)$. We can convert this to a right action by taking the anti-homomorphism $\rho':G\to \Aut(X)$, $g\mapsto \rho(g^{-1})$.
\end{rem}

Let $\ell\neq p$ be a prime number. We want to give a criterion for the extendability of group actions in terms of the $\ell$-adic cohomology of $X$ and $X^\circ$. Since $X$ has good reduction, $I_K$ acts trivially on $H^2_{\et}(X_{K^{\mathrm{s}}},\bb{Q}_\ell(1))$. Then $G_k\cong G_K/I_K$ acts on $H^2_{\et}(X_{K^{\mathrm{s}}},\bb{Q}_\ell(1))$. Denote by $H^2_{\et}(X_{K^{\mathrm{s}}},\bb{Q}_\ell(1))^{G_k}$ and $H^2_{\et}(X^\circ_{\overline{k}},\bb{Q}_\ell(1))^{G_k}$ respectively the $G_k$-invariant subspaces of $H^2_{\et}(X_{K^{\mathrm{s}}},\bb{Q}_\ell(1))$ and $H^2_{\et}(X^\circ_{\overline{k}},\bb{Q}_\ell(1))$. As in Section \ref{fibre product action}, we can define a $G$-action on $H^2_{\et}(X_{K^{\mathrm{s}}},\bb{Q}_\ell(1))^{G_k}$ and similarly on $H^2_{\et}(X^\circ_{\overline{k}},\bb{Q}_\ell(1))^{G_k}$ via specialization.

By adapting the method in \cite{Chiarellotto_2019_neron_ogg_shafarevich_criterion_k3surface}, we prove the following.

\begin{thm}\label{main theorem}
    Let $X$ be a K3 or Enriques surface over $K$ with good reduction. Let $G$ be a finite group acting on $X$ with base field $K_0$. Assume $K/K_0$ is finite and unramified. Then the $G$-action on $X$ is extendable if and only if there is a $G$-isomorphism $$H^2_{\et}(X_{K^{\mathrm{s}}},\bb{Q}_\ell(1))^{G_k}\cong H^2_{\et}(X^\circ_{\overline{k}},\bb{Q}_\ell(1))^{G_k}.$$
\end{thm}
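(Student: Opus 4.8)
The plan is to follow the strategy of Chiarellotto--Lazda--Liedtke, splitting the equivalence into its two implications and treating the cohomological invariant as the obstruction class it really is. First I would set up the right framework for twisted forms: since $G$ acts on $X$ on the right with base field $K_0$ and $K/K_0$ is finite unramified, giving a smooth model $\mathcal{X}/\mathcal{O}_K$ together with an extension of the $G$-action is the same as giving a smooth model over $\mathcal{O}_{K_0}$ of the twisted form $X/G$, or more precisely a descent-type datum; the key point, made possible by the right action and Lemma \ref{ugly cocycle lemma}, is that the set of such equivariant smooth models (if nonempty) is a torsor, and the obstruction to nonemptiness lives in a suitable $H^1$ of $G$ (or of $G_{K_0}$) with coefficients built from $\Aut$ of the canonical reduction $X^\circ$ and its cohomology.

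For the forward direction, assume the $G$-action extends to a smooth model $\mathcal{X}$. Then $\mathcal{X}_k = X^\circ$ receives a $G$-action compatible, via specialization, with the $G$-action on $X$. Smooth proper base change gives a $G$-equivariant (and $G_k$-equivariant) isomorphism $H^2_{\et}(X_{K^{\mathrm s}},\mathbb{Q}_\ell(1)) \cong H^2_{\et}(X^\circ_{\overline k},\mathbb{Q}_\ell(1))$ of the underlying Galois-and-$G$ modules; taking $G_k$-invariants and then remembering the induced $G$-action (as defined in Section \ref{fibre product action}) produces the required $G$-isomorphism of invariant subspaces. This direction should be essentially formal once the compatibility of specialization with the fibre-product $G$-action is in hand.

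The substantive direction is the converse. Given a $G$-isomorphism $\varphi: H^2_{\et}(X_{K^{\mathrm s}},\mathbb{Q}_\ell(1))^{G_k} \cong H^2_{\et}(X^\circ_{\overline k},\mathbb{Q}_\ell(1))^{G_k}$, I would first upgrade it to an honest isometry of the relevant lattices: the $G_k$-invariants correspond to the Néron--Severi side, and using the Torelli theorem for K3 (resp. Enriques) surfaces together with the known comparison between $X$ and $X^\circ$ from \cite{liedtke_2017_good_reduction_k3surfaces}, one shows that such a cohomological $G$-isomorphism is realized by an actual isomorphism of the varieties compatible with the $G$-actions after a controlled adjustment — equivalently, that the cocycle in $H^1(G, \Aut(X^\circ))$ (or its arithmetic refinement over $K_0$) measuring the failure of the $G$-action to extend is killed. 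Concretely: choose any smooth model $\mathcal{X}_0$ of $X$ over $\mathcal{O}_K$ (it exists, $X$ has good reduction); the $G$-action on $X$ together with $\mathcal{X}_0$ gives for each $g$ an isomorphism $g^*\mathcal{X}_0 \dashrightarrow \mathcal{X}_0$ only up to an automorphism of $X^\circ$ in the special fibre, yielding a class in a nonabelian $H^1$; the cohomological hypothesis, via Torelli and the discriminant-form bookkeeping, shows this class lifts to the image of $H^1(G,\text{(something acting trivially on cohomology)})$, and the latter group is trivial (or the relevant automorphisms are unobstructed) because automorphisms of a K3 or Enriques surface acting trivially on $H^2$ are severely constrained. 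Straightening the cocycle then produces an equivariant smooth model.

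The main obstacle I expect is exactly this last step: passing from the cohomological $G$-isomorphism to an \emph{equivariant} geometric model. Two technical difficulties lurk there. One is the arithmetic descent from $K$ down to $K_0$ — one must arrange the straightened model to be defined compatibly over $\mathcal{O}_{K_0}$, which is where the hypothesis that $K/K_0$ is finite \emph{and unramified} is used, together with a vanishing statement for the relevant unramified Galois cohomology (as in \cite{Chiarellotto_2019_neron_ogg_shafarevich_criterion_k3surface}). The other is controlling the kernel of the specialization map $\mathrm{sp}:\Aut(X)\to\Aut(X^\circ)$ and the action of automorphisms on the discriminant group of the Néron--Severi lattice: one needs that an automorphism of $X^\circ$ which is trivial on $H^2_{\et}(X^\circ_{\overline k},\mathbb{Q}_\ell(1))$ lifts (after possibly enlarging nothing) to an automorphism of $X$ compatible with the $G$-structure, so that the obstruction class genuinely dies. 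For Enriques surfaces one additionally has to be careful because $H^2$ with $\mathbb{Q}_\ell$-coefficients forgets the $2$-torsion canonical class, so the lattice-theoretic argument must be run on the K3 double cover or on integral cohomology and then reconciled with the $\mathbb{Q}_\ell$-statement; I would handle this by the same device used for Corollary \ref{good reduction of enriques surfaces}, relating $X$ to its K3 double cover.
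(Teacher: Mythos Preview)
Your forward direction is fine and matches the paper: once the action extends to a smooth model, smooth proper base change gives an equivariant comparison isomorphism, and restricting to $G_k$-invariants yields the desired $G$-isomorphism.

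The converse, however, has a genuine gap: you have placed the obstruction in the wrong group and invoked the wrong tool. The failure of the $G$-action to extend is \emph{not} measured by a class in $H^1(G,\Aut(X^\circ))$, and the Torelli theorem plays no role. The point is that any two smooth models of $X$ already have \emph{isomorphic} special fibres (this is the uniqueness of the canonical reduction), so the obstruction cannot be seen at the level of automorphisms of $X^\circ$. What distinguishes smooth models is the birational map between them, and for an $\mathcal{L}$-terminal model $\mathcal{X}$ (with $\mathcal{L}$ a $G$-fixed polarization) each such birational map $f$ determines an element $s_f$ of the finite \emph{Weyl group} $\mathcal{W}_{X,\mathcal{L}}\subset\Aut(\Pic(X^\circ)_{\mathbb{Q}})$, generated by reflections in the exceptional curves of the RDP model $P(X,\mathcal{L})$. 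The correct obstruction is the class $\alpha_{\mathcal{L}}\in H^1(G,\mathcal{W}_{X,\mathcal{L}})$ of the cocycle $g\mapsto s_{g^\#_{\mathcal{X}}}$; one shows (via the flop construction, Propositions~\ref{surjectivity} and~\ref{birational to weyl group}) that $\alpha_{\mathcal{L}}$ is trivial iff the action extends to some $\mathcal{L}$-terminal model.

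The link to the cohomological hypothesis is then purely representation-theoretic, not Torelli-type geometry: one defines an $\ell$-adic realization $j_\ell:\mathcal{W}_{X,\mathcal{L}}\to\Aut(H^2_{\et}(X^\circ_{\overline{k}},\mathbb{Q}_\ell(1))^{G_k})$, checks that the pushforward $\beta_{\mathcal{L},\ell}=j_{\ell*}(\alpha_{\mathcal{L}})$ is exactly the twist comparing the two $G$-modules in the statement, and proves that $j_{\ell*}$ has trivial kernel (using that $\mathcal{W}_{X,\mathcal{L}}$ acts trivially on the quotient $H^2_{\et}(X^\circ_{\overline{k}},\mathbb{Q}_\ell(1))^{G_k}/\Lambda_{X,\mathcal{L},\mathbb{Q}_\ell}$ and semisimplicity of $G$-representations in characteristic~$0$). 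Your proposed mechanism---lifting automorphisms of $X^\circ$ trivial on $H^2$, controlling $\ker(\mathrm{sp})$, discriminant forms, passing to the K3 double cover for Enriques---is not what is needed; the Enriques case is handled uniformly by working in $\Pic(X^\circ)_{\mathbb{Q}}$, and no statement about $\Aut(X^\circ)$ or $\ker(\mathrm{sp})$ enters.
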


A further criterion is given in Theorem \ref{refined main theorem}. In Section \ref{fibre product action}, we define a group $\overline{G}_k$ that acts on $X_{K^{\mathrm{s}}}$ and $X_{\overline{k}}$, which combines the action of $G$ on $X$ and the Galois action on $K^{\mathrm{s}}$ and $\overline{k}$. Then we show that the condition in Theorem \ref{main theorem} is further equivalent to the existence of a $\overline{G}_k$-isomorphism $H^2_{\et}(X_{K^{\mathrm{s}}},\bb{Q}_\ell(1))\cong H^2_{\et}(X^\circ_{\overline{k}},\bb{Q}_\ell(1))$.

Theorem \ref{main theorem} recovers the criterion for the extendability of Galois action in \cite{Chiarellotto_2019_neron_ogg_shafarevich_criterion_k3surface}. Let $L/K$ be a finite and unramified extension. Let $k_L$ be the residue field of $L$. Let $X$ be a K3 surface over $K$ such that $X_L$ has good reduction. By passing to the Galois closure, we may assume $L/K$ is Galois.

\begin{cor}\cite[Theorem 1.6]{Chiarellotto_2019_neron_ogg_shafarevich_criterion_k3surface}
    The following are equivalent.\begin{enumerate}
        \item $X$ has good reduction.
        \item The action of $G_{L/K}$ on $X_L$ is extendable.
        \item There is a $G_K$-isomorphism $H^2_{\et}(X_{K^{\text{s}}},\bb{Q}_{\ell})\cong H^2_{\et}(X_{\overline{k}}^\circ,\bb{Q}_{\ell})$.
        \item There is a $G_{L/K}$-isomorphism $H^2_{\et}(X_{K^{\text{s}}},\bb{Q}_{\ell}(1))^{G_{k_L}}\cong H^2_{\et}(X_{\overline{k}}^\circ,\bb{Q}_{\ell}(1))^{G_{k_L}}$.
    \end{enumerate}
\end{cor}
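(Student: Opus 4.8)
The plan is to derive all four equivalences from Theorem \ref{main theorem} and Theorem \ref{refined main theorem}, applied to the K3 surface $X_L$ over $L$ equipped with the (semilinear) action of $G=G_{L/K}$, together with one elementary descent argument. Note first that the hypotheses are met: the base field of this $G$-action is $L^{G_{L/K}}=K$, and $L/K$ is finite and unramified, while the residue field in the role of "$k$" is $k_L$.

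\textbf{(1) $\Leftrightarrow$ (2).} This is the geometric step sketched in the introduction. If $X$ has a smooth model $\cal{X}_0$ over $\cal{O}_K$, then $\cal{X}_0\times_{\cal{O}_K}\cal{O}_L$ is a smooth model of $X_L$ on which $G_{L/K}$ acts through the second factor, extending its action on $X_L$, so (2) holds. Conversely, given a smooth model $\cal{X}$ of $X_L$ to which the $G_{L/K}$-action extends, I would form $\cal{X}/G_{L/K}$: since $\cal{O}_L/\cal{O}_K$ is finite \'etale this is faithfully flat descent along a finite \'etale cover, so $\cal{X}/G_{L/K}$ is a proper flat algebraic space over $\cal{O}_K$ with $\cal{X}=(\cal{X}/G_{L/K})\times_{\cal{O}_K}\cal{O}_L$; smoothness descends and the generic fibre is $X_L/G_{L/K}=X$, giving (1). (One must work with algebraic spaces here, but descent for algebraic spaces along finite \'etale covers is standard.)

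\textbf{(2) $\Leftrightarrow$ (4) and (3) $\Leftrightarrow$ (4).} Theorem \ref{main theorem} applied to $X_L$ with $G=G_{L/K}$ states precisely that the $G_{L/K}$-action on $X_L$ is extendable iff there is a $G_{L/K}$-isomorphism $H^2_{\et}((X_L)_{L^{\mathrm{s}}},\bb{Q}_\ell(1))^{G_{k_L}}\cong H^2_{\et}(X^\circ_{\overline{k}},\bb{Q}_\ell(1))^{G_{k_L}}$; since $L^{\mathrm{s}}=K^{\mathrm{s}}$ and $(X_L)_{K^{\mathrm{s}}}=X_{K^{\mathrm{s}}}$, this is exactly (4), giving (2) $\Leftrightarrow$ (4). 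For (3) $\Leftrightarrow$ (4) I would invoke Theorem \ref{refined main theorem}: condition (4) is equivalent to the existence of a $\overline{G}_k$-isomorphism $H^2_{\et}(X_{K^{\mathrm{s}}},\bb{Q}_\ell(1))\cong H^2_{\et}(X^\circ_{\overline{k}},\bb{Q}_\ell(1))$. In this purely Galois situation the group $\overline{G}_k$ of Section \ref{fibre product action} is canonically identified with $G_k$ acting on $X_{K^{\mathrm{s}}}$ in the usual way (the $G_{L/K}$-factor and $G_{k_L}$-factor assemble into $G_k$), and a $G_k$-equivariant map is the same as a $G_K$-equivariant one by inflation. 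Finally, tensoring both sides with $\bb{Q}_\ell(-1)$ converts a $\bb{Q}_\ell(1)$-isomorphism into a $\bb{Q}_\ell$-isomorphism and back, since $\bb{Q}_\ell(1)$ is the same Galois module on either side; this yields (3). (Alternatively $(1)\Rightarrow(3)$ is immediate from smooth proper base change once good reduction is known, and $(3)\Rightarrow(4)$ follows by observing that (3) forces $I_K$ to act trivially on $H^2_{\et}(X_{K^{\mathrm{s}}},\bb{Q}_\ell)$, after which one restricts to $G_L$, passes to $G_{k_L}$-invariants, and twists.)

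\textbf{Main obstacle.} The only genuinely delicate point is the bookkeeping of identifications: verifying that the abstract specialization $G_{L/K}$-action and the combined group $\overline{G}_k$ coming from the general machinery of Section \ref{fibre product action} really do collapse, in the case $G=G_{L/K}$ acting on $X_L$, to the familiar absolute Galois actions on $H^2_{\et}(X_{K^{\mathrm{s}}},\bb{Q}_\ell)$. Once that identification is pinned down, the four equivalences are formal consequences of Theorems \ref{main theorem} and \ref{refined main theorem} and the descent step above.
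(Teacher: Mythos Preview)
Your proposal is correct and uses the same ingredients as the paper's proof: the quotient/descent step for the equivalence of (1) and (2), Theorem~\ref{main theorem} for (4)$\Leftrightarrow$(2), and the comparison isomorphism for linking (3) into the chain. The paper organizes these as the cycle $(2)\Rightarrow(1)\Rightarrow(3)\Rightarrow(4)\Rightarrow(2)$, whereas you prove three biconditionals; your primary route for (3)$\Leftrightarrow$(4) via Theorem~\ref{refined main theorem} and the identification $\overline{G}_k\cong G_k$ is valid (and your check that the fibre product collapses because $G_{L/K}\to G_{k_L/k}$ is an isomorphism is exactly the needed bookkeeping), but the paper bypasses this entirely by taking the shortcut you mention parenthetically, namely $(1)\Rightarrow(3)$ from smooth proper base change and $(3)\Rightarrow(4)$ by restricting to $G_{k_L}$-invariants and twisting.
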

\begin{proof}
    Suppose the $G_{L/K}$-action on $X_L$ extends to a smooth model $\cal{X}$. Then $\cal{X}/G_{L/K}$ is a smooth model of $X$ by \cite[Proposition 7.1]{Chiarellotto_2019_neron_ogg_shafarevich_criterion_k3surface}. This proves $(2)\implies(1)$.

    Next, suppose $X$ has a smooth model $\cal{X}$. Then the comparison isomorphism $H^2_{\et}(X_{K^{\mathrm{s}}},\bb{Q}_{\ell})\cong H^2_{\et}(X_{\overline{k}}^\circ,\bb{Q}_{\ell})$ is $G_K$-equivariant. This proves $(1)\implies(3)$.

    For $(3)\implies(4)$, we simply restrict the $G_K$-isomorphism to the $G_{k_L}$-invariant subspace. Finally, $(4)\implies(2)$ follows by Theorem \ref{main theorem}.
\end{proof}

In case of a $K$-linear action, $G$ acts on $H^2_{\et}(X_{K^{\mathrm{s}}},\bb{Q}_\ell)$ and on $H^2_{\et}(X^\circ_{\overline{k}},\bb{Q}_\ell)$ via specialization. We have the following criterion for extendability.

\begin{thm}\label{linear case main theorem}
    Let $X$ be a K3 or Enriques surface over $K$ with good reduction. Let $G$ be a finite group acting $K$-linearly on $X$. Then the $G$-action on $X$ is extendable if and only if there is a $G$-isomorphism $$H^2_{\et}(X_{K^{\mathrm{s}}},\bb{Q}_{\ell})\cong H^2_{\et}(X^\circ_{\overline{k}},\bb{Q}_{\ell}).$$
\end{thm}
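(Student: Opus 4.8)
The plan is to deduce Theorem \ref{linear case main theorem} from Theorem \ref{main theorem} by checking that, for a $K$-linear action, the abstract criterion involving $G_k$-invariants collapses to the stated one. First I would observe that if the $G$-action is $K$-linear, then its base field is $K_0 = K$, so the hypothesis ``$K/K_0$ finite and unramified'' is trivially satisfied and Theorem \ref{main theorem} applies verbatim. Moreover, because the action is $K$-linear, the $G$-action on $X$ commutes with the full Galois action of $G_K$, hence commutes with the $G_k$-action on $H^2_{\et}(X_{K^{\mathrm{s}}},\bb{Q}_\ell(1))$ and on $H^2_{\et}(X^\circ_{\overline k},\bb{Q}_\ell(1))$; in particular the $\overline G_k$ of Section \ref{fibre product action} is just the direct product $G \times G_k$ in this case.

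The key step is then to pass between the untwisted cohomology $H^2_{\et}(X_{K^{\mathrm{s}}},\bb{Q}_\ell)$ and the Tate twist $H^2_{\et}(X_{K^{\mathrm{s}}},\bb{Q}_\ell(1))$, and between the full space and the $G_k$-invariant subspace. A $G$-isomorphism $H^2_{\et}(X_{K^{\mathrm{s}}},\bb{Q}_\ell) \cong H^2_{\et}(X^\circ_{\overline k},\bb{Q}_\ell)$ in the sense of Theorem \ref{linear case main theorem} should be understood as a $\overline G_k$-isomorphism, i.e.\ one compatible with both the $G$-action and the Galois action (this is how the analogous statement (3) in the preceding corollary is phrased). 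Since $\bb{Q}_\ell(1)$ is a one-dimensional Galois representation on which $G$ acts trivially (the Tate twist is a $K$-linear operation), tensoring any $\overline G_k$-equivariant map with $\bb{Q}_\ell(-1)$ gives a bijection between $\overline G_k$-isomorphisms $H^2(X_{K^{\mathrm{s}}},\bb{Q}_\ell) \cong H^2(X^\circ_{\overline k},\bb{Q}_\ell)$ and $\overline G_k$-isomorphisms $H^2(X_{K^{\mathrm{s}}},\bb{Q}_\ell(1)) \cong H^2(X^\circ_{\overline k},\bb{Q}_\ell(1))$. Restricting such a $\overline G_k$-isomorphism to $G_k$-invariants yields a $G$-isomorphism of the invariant subspaces, which is exactly the criterion in Theorem \ref{main theorem}; this gives one direction.

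For the converse I would invoke the refinement mentioned after Theorem \ref{main theorem}: the existence of a $G$-isomorphism on the $G_k$-invariant subspaces is equivalent to the existence of a $\overline G_k$-isomorphism $H^2_{\et}(X_{K^{\mathrm{s}}},\bb{Q}_\ell(1)) \cong H^2_{\et}(X^\circ_{\overline k},\bb{Q}_\ell(1))$ on the whole space (Theorem \ref{refined main theorem}). Untwisting again produces a $\overline G_k$-isomorphism $H^2_{\et}(X_{K^{\mathrm{s}}},\bb{Q}_\ell) \cong H^2_{\et}(X^\circ_{\overline k},\bb{Q}_\ell)$, which is the $G$-isomorphism asserted in Theorem \ref{linear case main theorem}. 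So the two conditions are literally equivalent once one fixes conventions, and the theorem follows.

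The main obstacle I anticipate is bookkeeping rather than mathematics: one must be careful about what ``$G$-isomorphism'' means in the $K$-linear setting, namely that it should implicitly also respect the residual Galois action (otherwise the statement would be false, since over $K^{\mathrm{s}}$ both cohomology groups are abstractly isomorphic as $G$-modules whenever they have the same dimension). Making the identification $\overline G_k \cong G \times G_k$ precise in the $K$-linear case, and checking that the Tate twist is genuinely $\overline G_k$-equivariant with trivial $G$-action, is the crux; once that is set up, the reduction to Theorem \ref{main theorem} (equivalently Theorem \ref{refined main theorem}) is immediate.
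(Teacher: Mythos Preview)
Your proposal rests on a misreading of the statement. In Theorem~\ref{linear case main theorem} the phrase ``$G$-isomorphism'' means exactly what it says: an isomorphism of $\bb{Q}_\ell$-vector spaces equivariant for the $G$-action alone, with no Galois equivariance imposed. The paper makes this explicit immediately after the statement (``the extendability of $K$-linear group actions is a geometric property rather than an arithmetic property'') and again in the proof of Corollary~\ref{characteristic and extendability}, where the criterion is checked by comparing characters of $G$. Your justification for reading in a hidden Galois condition --- that otherwise the two $22$-dimensional spaces would be ``abstractly isomorphic as $G$-modules whenever they have the same dimension'' --- is simply false: the $G$-action on $H^2_{\et}(X_{K^{\mathrm{s}}},\bb{Q}_\ell)$ comes from $G$ acting on $X$, while the $G$-action on $H^2_{\et}(X^\circ_{\overline{k}},\bb{Q}_\ell)$ comes from the specialization action on $X^\circ$, and these can genuinely be non-isomorphic $G$-representations. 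That is the entire content of the theorem.

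With the correct reading, your reduction to Theorem~\ref{refined main theorem} only yields one direction: extendability gives a $\overline{G}_k$-isomorphism, hence a fortiori a $G$-isomorphism. The converse --- that a bare $G$-isomorphism, with no control on the $G_k$-action, forces extendability --- is the substantive part, and it does not follow from Theorem~\ref{refined main theorem}, since a $G$-isomorphism need not restrict to the $G_k$-invariants nor upgrade to a $\overline{G}_k$-isomorphism a priori. The paper handles this by running the cocycle argument a second time with a different target: instead of $j_\ell$ landing in $\Aut(H^2_{\et}(X^\circ_{\overline{k}},\bb{Q}_\ell(1))^{G_k})$, one uses $q_\ell:\cal{W}_{X,\cal{L}}\to\Aut_{\bb{Q}_\ell}(H^2_{\et}(X^\circ_{\overline{k}},\bb{Q}_\ell(1)))$, forgetting the Galois structure entirely. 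One then proves (Lemma~\ref{trivial kernel linear case}) that ${q_\ell}_*$ still has trivial kernel, and (Proposition~\ref{linear case twist}) that the twist of $H^2_{\et}(X^\circ_{\overline{k}},\bb{Q}_\ell(1))$ by the resulting cocycle $\gamma_{\cal{L},\ell}$ is $G$-isomorphic to $H^2_{\et}(X_{K^{\mathrm{s}}},\bb{Q}_\ell(1))$. This parallel argument is what you are missing.
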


In particular, this implies that the extendability of $K$-linear group actions is a geometric property rather than an arithmetic property.

\subsection{Applications of the main results}

We present some applications of our main results. We will prove them in Section \ref{application}. The first application is a criterion for the extendability of a symplectic group action in terms of the order of the group and the residue characteristic.

\begin{defn}
    Let $X$ be a K3 surface over a field $F$. An $F$-linear automorphism of $X$ is symplectic if the induced automorphism of $H^0(X,\omega_X)$ is the identity. An $F$-linear group action of $G$ on $X$ is symplectic if for every $g\in G$, the automorphism $g:X\to X$ is symplectic.
\end{defn}

\begin{cor}\label{characteristic and extendability}
    Let $G$ be a finite group acting linearly and symplectically on $X$. If $p$ does not divide the order of $G$, then the $G$-action is extendable.
\end{cor}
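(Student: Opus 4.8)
The plan is to verify the cohomological criterion of Theorem~\ref{linear case main theorem}. Write $V=H^2_{\et}(X_{K^{\mathrm{s}}},\bb{Q}_\ell)$, with $G$ acting through its $K$-linear action on $X$, and $W=H^2_{\et}(X^\circ_{\overline{k}},\bb{Q}_\ell)$, with $G$ acting through the specialized action $g\mapsto g_k$; both are $22$-dimensional. By Theorem~\ref{linear case main theorem} it is enough to produce a $G$-isomorphism $V\cong W$. Since $\bb{Q}_\ell$ has characteristic $0$, the algebra $\bb{Q}_\ell[G]$ is semisimple, so two $\bb{Q}_\ell[G]$-modules are isomorphic as soon as their characters agree. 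Hence it suffices to check $\mathrm{tr}(g\mid V)=\mathrm{tr}(g_k\mid W)$ for every $g\in G$.

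Fix $g\in G$ and let $n$ be its order; since $n\mid|G|$ we have $p\nmid n$. By hypothesis $g$ is a symplectic automorphism of $X$, a property that persists after base change to $K^{\mathrm{s}}$. I next want $g_k$ to be symplectic as well: this should follow from the compatibility of the specialization homomorphism with the comparison between the de Rham cohomology of $X$ and the crystalline cohomology of $X^\circ$ respecting the Hodge filtrations (Section~\ref{specialization homomorphism subsection}) --- triviality of $g$ on $\mathrm{Fil}^2H^2_{\mathrm{dR}}(X/K)=H^0(X,\omega_X)$ then forces triviality of $g_k$ on $H^0(X^\circ,\omega_{X^\circ})$. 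Since the specialization homomorphism is injective, $g_k$ also has order $n$. So $g$ and $g_k$ are symplectic automorphisms of order $n$ on K3 surfaces over separably closed fields whose characteristics are prime to $n$.

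It now suffices to prove that the trace of a symplectic automorphism $\sigma$ of order $n$ on $H^2_{\et}(\cdot,\bb{Q}_\ell)$ of a K3 surface over a separably closed field of characteristic prime to $n$ depends only on $n$; applying this to $\sigma=g$ on $X_{K^{\mathrm{s}}}$ and to $\sigma=g_k$ on $X^\circ_{\overline{k}}$ then finishes the proof. The fixed locus of $\sigma$ is finite: were it to contain a curve, $d\sigma$ would act with eigenvalue $1$ on a line in each tangent space, and as $\sigma$ preserves the symplectic form the other eigenvalue would also be $1$, so $\sigma$ would be the identity near that curve, hence everywhere. Because $n$ is prime to the characteristic the fixed locus is smooth, and at each fixed point $d\sigma$ is conjugate to $\mathrm{diag}(\zeta,\zeta^{-1})$ for a root of unity $\zeta\ne1$ (the two eigenvalues are mutually inverse since $\sigma$ preserves the symplectic form); in particular $\mathrm{id}-d\sigma$ is invertible there, so every fixed point is transverse. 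The Grothendieck--Lefschetz trace formula therefore gives
\[
\#\mathrm{Fix}(\sigma)=\sum_i(-1)^i\,\mathrm{tr}\!\big(\sigma\mid H^i_{\et}\big)=2+\mathrm{tr}\!\big(\sigma\mid H^2_{\et}\big),
\]
using $H^1=H^3=0$ and the triviality of $\sigma$ on $H^0$ and $H^4$ of a K3 surface. Finally, $\#\mathrm{Fix}(\sigma)$ depends only on $n$: the numbers of fixed points of each possible local type are pinned down by applying the Lefschetz fixed point formula for the structure sheaf to $\sigma$ and its powers (each again symplectic), a computation carried out by Nikulin over $\bb{C}$ whose outcome is unchanged in every characteristic prime to $n$ (compare the work of Dolgachev--Keum on symplectic automorphisms in positive characteristic).

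The hard part is the claim that specialization sends symplectic automorphisms to symplectic automorphisms; making it precise requires the compatibility of $\mathrm{sp}$ with the $p$-adic (de Rham / crystalline) comparison isomorphisms and their Hodge filtrations. A secondary point is to check that Nikulin's enumeration of the fixed points of a symplectic automorphism is insensitive to the characteristic once it is prime to the order of the automorphism.
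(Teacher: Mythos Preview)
Your overall strategy is exactly the paper's: reduce via Theorem~\ref{linear case main theorem} to a character comparison, then show that each $g_k$ is symplectic of the same order as $g$, and finally use that the trace of a symplectic automorphism on $H^2$ depends only on its order. For the last step the paper simply cites \cite[Lemma~2.13]{matsumoto_2021_extendability_automorphisms_k3surfaces}, while you sketch the Lefschetz fixed-point argument yourself; that part is fine.

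There are, however, two genuine gaps at the two intermediate steps.

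\emph{Preservation of symplecticness.} Your proposed route through the de Rham/crystalline comparison and its Hodge filtration is circular. Those comparison isomorphisms are produced from a smooth model $\cal{X}$, and they intertwine $g^*$ with $g_k^*$ only once $g$ extends to $\cal{X}$; but whether $g$ extends is precisely what we are trying to decide. Nothing in Section~\ref{specialization homomorphism subsection} gives such a compatibility before extendability is known. The paper's Lemma~\ref{symplecticness preserving} bypasses this by working on the open set $U\subset\cal{X}$ (the complement of finitely many special-fibre curves) to which $g$ \emph{does} extend: since $U$ agrees with $\cal{X}$ in codimension~$\leq 1$ one has $H^0(U,\omega_U)=H^0(\cal{X},\omega_{\cal{X}})\cong\cal{O}_K$, and one then compares the $g$-action on $H^0(X,\omega_X)$ with the $g_k$-action on $H^0(X^\circ,\omega_{X^\circ})$ through $H^0(U,\omega_U)$ directly. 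Your idea is morally in this direction, but the Hodge-theoretic packaging does not work as stated.

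\emph{Preservation of the order.} You assert that the specialization homomorphism is injective and deduce $\mathrm{ord}(g_k)=\mathrm{ord}(g)$. Injectivity of $\mathrm{sp}$ is not established anywhere in the paper and is not known in general; the kernel can contain elements of $p$-power order. What is true, and what the paper uses (Lemma~\ref{order preserving}), is \cite[Proposition~6.1]{matsumoto_2021_extendability_automorphisms_k3surfaces}: the ratio $\mathrm{ord}(g)/\mathrm{ord}(g_k)$ is always a power of $p$. Combined with the hypothesis $p\nmid|G|$ this forces the orders to agree. You need this input (or an independent proof of it); the bare assertion of injectivity is unjustified.
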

\begin{rem}
    Matsumoto proves this in \cite[Theorem 1.1]{matsumoto_2021_extendability_automorphisms_k3surfaces} with a weaker notion of extendability. In \cite{matsumoto_2021_extendability_automorphisms_k3surfaces}, an automorphism $g$ of a K3 surface $X$ with good reduction is defined to be extendable if there exists a finite extension $L/K$ such that $g$ extends to a smooth model of $X_L$. This corresponds to potential extendability in our terminology
\end{rem}
\begin{rem}
    Our proof of Corollary \ref{characteristic and extendability} is different from that of \cite[Theorem 1.1]{matsumoto_2021_extendability_automorphisms_k3surfaces}. Despite this, we need \cite[Lemma 2.13]{matsumoto_2021_extendability_automorphisms_k3surfaces} and \cite[Proposition 6.1]{matsumoto_2021_extendability_automorphisms_k3surfaces}. Neither of them depend on the main result of \cite{matsumoto_2021_extendability_automorphisms_k3surfaces}. Indeed, \cite[Lemma 2.13]{matsumoto_2021_extendability_automorphisms_k3surfaces} is a general fact about K3 surfaces and \cite[Proposition 6.1]{matsumoto_2021_extendability_automorphisms_k3surfaces} is based on a mild generalization of the flops considered in \cite[Section 4]{liedtke_2017_good_reduction_k3surfaces}.
\end{rem}

Assume $\chara K=0$. This corollary can be applied to show that singular K3 surfaces have potential good reduction modulo $p\neq 2$ with a scheme model. In \cite[Theorem 0.1]{Matsumoto_Shioda-Inose}, Matsumoto shows that if $p\neq 2, 3$, every K3 surface with a Shioda-Inose structure of product type has good reduction after a field extension of ramification index $1,2,3,4,$ or $6$. In particular, this holds for singular K3 surfaces.

This corollary generalizes the argument to $p=3$. The argument in \cite[Theorem 0.1]{Matsumoto_Shioda-Inose} fails for $p=3$: in step (4), Matsumoto shows that a symplectic involution on a K3 surface $X$ extends to a smooth model $\cal{X}$ using a relative elliptic fibration, that is, a fibration $\cal{X}\to\bb{P}^1_{\cal{O}_K}$ whose generic and special fibres are both elliptic fibrations. If $p=3$, the special fibre might be quasi-elliptic and the argument fails.

If we apply \cite[Theorem 1.1]{matsumoto_2021_extendability_automorphisms_k3surfaces}, we see that the symplectic involution is potentially extendable, so we see that $X$ has potential good reduction. However, we lose control of the ramification index. With Corollary \ref{characteristic and extendability}, we see that $X$ has good reduction after a field extension of ramification index $1,2,3,4,6,8,$ or $12$.

We can also relate the good reduction of Enriques surfaces with that of their K3 double covers. Recall that for every Enriques surface $Y$ over a field $F$ of characteristic different from $2$, there exists a K3 surface $X$ with a fixed-point-free involution $\iota$ such that $Y=X/\iota$. We say this $X$ is a K3 double cover of $Y$ and $\iota$ is the associated involution.

\begin{cor}\label{good reduction of enriques surfaces}
    Assume $p\neq 2$. Let $Y$ be an Enriques surface over $K$. Then the following are equivalent. \begin{enumerate}
        \item $Y$ has good reduction.
        \item There exists a K3 double cover $X$ of $Y$, with associated involution $\iota$, such that $X$ has good reduction and $\iota_k$ has no fixed points.
        \item There exists a K3 double cover $X$ of $Y$, with associated involution $\iota$, such that $X$ has good reduction and $\dim H^2_{\et}(X_{\overline{k}}^\circ,\bb{Q}_{\ell})^{\iota_k=\mathrm{id}}=10$.
    \end{enumerate}
\end{cor}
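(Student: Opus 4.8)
The plan is to prove the cycle of implications $(1)\Rightarrow(2)\Rightarrow(3)\Rightarrow(1)$.

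\medskip

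\noindent\textbf{$(1)\Rightarrow(2)$.} Let $\cal{Y}$ be a smooth proper model of $Y$ over $\cal{O}_K$, with structure map $f$. Since the fibres of $f$ are Enriques surfaces we have $f_*\cal{O}_{\cal{Y}}=\cal{O}_{\cal{O}_K}$, and $\omega_{\cal{Y}/\cal{O}_K}$ is a line bundle whose square restricts to the trivial bundle on both $\cal{Y}_K$ and $\cal{Y}_k$. A line bundle on $\cal{Y}$ that is trivial on both fibres over the DVR $\cal{O}_K$ is trivial (apply cohomology and base change to $f_*(\omega_{\cal{Y}/\cal{O}_K}^{\otimes 2})$ together with $f_*\cal{O}_{\cal{Y}}=\cal{O}_{\cal{O}_K}$), so $\omega_{\cal{Y}/\cal{O}_K}^{\otimes 2}\cong\cal{O}_{\cal{Y}}$. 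Fixing such an isomorphism, set $\cal{X}:=\underline{\Spec}_{\cal{Y}}(\cal{O}_{\cal{Y}}\oplus\omega_{\cal{Y}/\cal{O}_K})$; since $p\neq 2$, the map $\cal{X}\to\cal{Y}$ is finite étale of degree $2$, so $\cal{X}$ is smooth and proper over $\cal{O}_K$. Its generic fibre is the K3 double cover $X$ of $Y$ (the same construction base-changed to $K$), and its special fibre is the K3 double cover of $\cal{Y}_k=Y^\circ$. Hence $X$ has good reduction, $X^\circ=\cal{X}_k$ by \cite[Proposition 4.7]{liedtke_2017_good_reduction_k3surfaces}, and the deck involution $\iota$ of $\cal{X}\to\cal{Y}$ acts freely on $\cal{X}$; as $\iota$ extends to the smooth model $\cal{X}$, the automorphism $\iota_k=\mathrm{sp}(\iota)=\iota|_{\cal{X}_k}$ is the deck involution of the étale cover $\cal{X}_k\to\cal{Y}_k$ and thus has no fixed points.

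\medskip

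\noindent\textbf{$(2)\Rightarrow(3)$ and $(3)\Rightarrow(1)$.} Suppose $X$ and $\iota$ are as in $(2)$. Then $\mathrm{sp}(\iota)$ acts freely on the K3 surface $X^\circ_{\overline{k}}$, so the Grothendieck--Lefschetz fixed point formula gives $\sum_i(-1)^i\,\mathrm{tr}\bigl(\mathrm{sp}(\iota)^*\mid H^i_{\et}(X^\circ_{\overline{k}},\bb{Q}_\ell)\bigr)=0$; since $H^1=H^3=0$ and $\mathrm{sp}(\iota)^*$ is the identity on $H^0$ and $H^4$, this forces $\mathrm{tr}(\mathrm{sp}(\iota)^*\mid H^2_{\et}(X^\circ_{\overline{k}},\bb{Q}_\ell))=-2$, whence $\dim H^2_{\et}(X^\circ_{\overline{k}},\bb{Q}_\ell)^{\iota_k=\mathrm{id}}=(22-2)/2=10$, which is $(3)$. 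Conversely, suppose $X$ and $\iota$ are as in $(3)$. The same Lefschetz computation applied to $\iota$ on $X_{K^{\mathrm{s}}}$, where the Enriques involution is fixed-point-free, yields $\dim H^2_{\et}(X_{K^{\mathrm{s}}},\bb{Q}_\ell)^{\iota=\mathrm{id}}=10$ as well. Both cohomology groups are $22$-dimensional with $10$-dimensional $\iota$-invariant part, and a $\bb{Q}_\ell$-representation of $\bb{Z}/2$ is determined by the dimensions of its $\pm1$ eigenspaces, so there is a $\langle\iota\rangle$-equivariant isomorphism $H^2_{\et}(X_{K^{\mathrm{s}}},\bb{Q}_\ell)\cong H^2_{\et}(X^\circ_{\overline{k}},\bb{Q}_\ell)$. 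By Theorem \ref{linear case main theorem} applied to the $K$-linear $\bb{Z}/2$-action generated by $\iota$ on $X$, the involution $\iota$ extends to a smooth model $\cal{X}'$ of $X$. The fixed locus $(\cal{X}')^\iota$ is smooth over $\cal{O}_K$ because the order of $\iota$ is invertible in $\cal{O}_K$; its generic fibre $X^\iota$ is empty, and a flat $\cal{O}_K$-space with empty generic fibre is empty, so $\iota$ acts freely on $\cal{X}'$. Therefore $\cal{X}'\to\cal{X}'/\langle\iota\rangle=:\cal{Y}'$ is finite étale of degree $2$, $\cal{Y}'$ is smooth and proper over $\cal{O}_K$, and $\cal{Y}'_K=X/\iota=Y$; hence $Y$ has good reduction.

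\medskip

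\noindent\textbf{Main obstacle.} The step requiring the most care is $(1)\Rightarrow(2)$, namely showing that the canonical K3 double cover of $Y$ spreads out over $\cal{O}_K$: that $\omega_{\cal{Y}/\cal{O}_K}$ is $2$-torsion in $\Pic(\cal{Y})$, and that the resulting $\cal{X}$ is genuinely a smooth model of the generic K3 double cover with $\cal{X}_k=X^\circ$. Once this is in place, together with the standard facts that the fixed locus of a tame (here order-$2$) automorphism of a smooth $\cal{O}_K$-space is smooth over $\cal{O}_K$ and that the étale quotient of a smooth proper model by a free involution is again a smooth proper model, the remaining implications reduce to the Lefschetz bookkeeping above and one application of Theorem \ref{linear case main theorem}.
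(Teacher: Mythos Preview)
Your proof is correct and follows the same overall architecture as the paper (the cycle $(1)\Rightarrow(2)\Rightarrow(3)\Rightarrow(1)$, with Theorem~\ref{linear case main theorem} as the key input for $(3)\Rightarrow(1)$), but the argument for smoothness of the quotient in $(3)\Rightarrow(1)$ is genuinely different. The paper does \emph{not} show that the extended involution acts freely on the smooth model $\cal{X}'$; instead it argues that $\iota_k$ must be non-symplectic (a symplectic quotient would have holomorphic Euler characteristic $2$, contradicting invariance of $\chi$ across the family), and then invokes Nikulin's classification of non-symplectic involutions on K3 surfaces to conclude that $\cal{X}'_k/\iota_k$ is smooth. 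Your route---the fixed locus of a tame involution on a smooth $\cal{O}_K$-space is smooth, hence flat, hence empty since its generic fibre is---is more elementary and avoids the appeal to Nikulin's classification entirely; it also yields the stronger statement that $\iota$ is free on the model, not just that the quotient is smooth. The paper's approach, on the other hand, keeps the argument entirely within the geometry of K3 and Enriques surfaces and does not require the (standard, but not stated in the paper) fact about smoothness of tame fixed loci on algebraic spaces. Your handling of $(2)\Rightarrow(3)$ via the Grothendieck--Lefschetz trace formula is also a minor variation: the paper instead identifies the invariant subspace with $H^2$ of the Enriques quotient directly.
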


\noindent \textbf{Acknowledgements.} I would like to express my sincere gratitude to my supervisor, Christopher Lazda, for his invaluable support in many aspects. I also want to thank Yuya Matsumoto for his helpful comments on his proof of the good reduction of K3 surfaces with a Shioda-Inose structure. This work would not have been possible without their help. Last but not least, I gratefully acknowledge the financial support from the University of Exeter.

\section{Preliminaries}

In this section, we review the preliminaries of this article, including the specialization homomorphism, RDP models, Weyl groups, and group cohomology with coefficients in a non-abelian group.

\subsection{Specialization homomorphisms}\label{specialization homomorphism subsection}

Let $X$ be a K3 or Enriques surface over $K$ with good reduction. Let $X^\circ$ be the canonical reduction of $X$.

\begin{lem}
    Let $g:X\to X$ be an automorphism. For every smooth model $\cal{X}$ of $X$, $g$ extends uniquely to a birational self-map $\cal{X}\dashrightarrow\cal{X}$ defined away from finitely many curves on the special fibre.
\end{lem}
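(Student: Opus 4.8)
The plan is to take the extended map to be simply the $\cal{O}_K$-rational self-map that $g$ induces on $\cal{X}$, and then to pin down its indeterminacy locus via the valuative criterion of properness. For the construction, recall that the generic fibre $X$ is a dense open subspace of $\cal{X}$, so composing $g\colon X\to X$ with the open immersion $X\hookrightarrow\cal{X}$ yields an $\cal{O}_K$-rational map $g_{\cal{X}}\colon\cal{X}\dashrightarrow\cal{X}$ which restricts to the isomorphism $g$ on $X$; in particular $g_{\cal{X}}$ is birational, since applying the same construction to $g^{-1}$ furnishes an inverse. Uniqueness is immediate: any birational self-map of $\cal{X}$ restricting to $g$ on $X$ agrees with $g_{\cal{X}}$ on the dense open subspace $X$, hence defines the same rational map. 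It therefore remains to show that the indeterminacy locus $Z$ of $g_{\cal{X}}$ is contained in a finite union of curves on the special fibre $\cal{X}_k$.

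The heart of the matter is that $\cal{X}$ is normal, being smooth over $\cal{O}_K$, while $\cal{X}\to\Spec\cal{O}_K$ is proper. Hence for every codimension-one point $y\in\cal{X}$ the local ring $\cal{O}_{\cal{X},y}$ is a discrete valuation ring, and the valuative criterion of properness, applied to $\cal{X}\to\Spec\cal{O}_K$ with the generic point sent into $\cal{X}$ via $\Spec K(\cal{X})\to X\xrightarrow{g}X\hookrightarrow\cal{X}$ and with the structure morphism $\Spec\cal{O}_{\cal{X},y}\to\Spec\cal{O}_K$, produces a unique morphism $\Spec\cal{O}_{\cal{X},y}\to\cal{X}$ extending $g_{\cal{X}}$; thus $g_{\cal{X}}$ is defined at $y$. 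Consequently $Z$ contains no codimension-one point of $\cal{X}$, so $Z$ has codimension at least $2$ in $\cal{X}$. Since $g$ is an honest morphism on $X$, we have $Z\cap X=\emptyset$, so $Z$ is a closed subspace of $\cal{X}_k$; as $\cal{X}_k$ is an irreducible surface whose generic point is a codimension-one point of $\cal{X}$ and hence does not lie in $Z$, every irreducible component of $Z$ has dimension at most one. This is precisely the statement that $g_{\cal{X}}$ is defined away from finitely many curves on $\cal{X}_k$.

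The one point requiring a little care is that $\cal{X}$ is only an algebraic space, not necessarily a scheme; but the valuative criterion of properness is available in that generality, and normality, codimension, and the indeterminacy locus are all \'etale-local, so one may equally argue \'etale-locally, where $\cal{X}$ becomes a scheme. I expect this bookkeeping to be the sole mildly delicate point: the argument is otherwise formal and uses nothing about K3 or Enriques surfaces beyond $\dim X=2$, which is what turns ``codimension $\geq 2$ in $\cal{X}$'' into ``dimension $\leq 1$ in the surface $\cal{X}_k$'', i.e.\ a finite union of curves.
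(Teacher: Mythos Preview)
Your argument is correct and is precisely the standard one: normality of $\cal{X}$ plus the valuative criterion of properness forces the indeterminacy locus to have codimension $\geq 2$, and since it lies in the two-dimensional special fibre it is at most one-dimensional. The paper does not give its own proof but simply cites \cite[Proposition 2.2(1)]{matsumoto_2021_extendability_automorphisms_k3surfaces}, whose argument is exactly this one, so your write-up matches the intended proof.
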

\begin{proof}
    The proof is the same as \cite[Proposition 2.2(1)]{matsumoto_2021_extendability_automorphisms_k3surfaces}.
\end{proof}

By this lemma, every automorphism $g$ on $X$ extends to $U\subset\cal{X}$, the complement of finitely many curves on the special fibre. Then it restricts to a birational self-map of $X^\circ$. Since $X^\circ$ is a K3 or Enriques surface, the self-map extends uniquely to a automorphism of $X^\circ$. This defines a group homomorphism $\mathrm{sp}_{\cal{X}}:\Aut(X)\to\Aut(X^\circ)$ called the specialization homomorphism.

\begin{lem}
    The specialization map $\mathrm{sp}_{\cal{X}}:\Aut(X)\to\Aut(X^\circ)$ is independent of the choice of $\cal{X}$.
\end{lem}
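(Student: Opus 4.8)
The plan is to relate any two smooth models by a birational map over $\cal{O}_K$ which is the identity on generic fibres, and to show that the two specialization homomorphisms are intertwined by the resulting (canonical) identification of the special fibres. Let $\cal{X}$ and $\cal{X}'$ be two smooth models of $X$. Since both have generic fibre canonically equal to $X$, the identity of $X$ extends to a birational map $h\colon\cal{X}\dashrightarrow\cal{X}'$ over $\cal{O}_K$. As $\cal{X}$ is normal and $\cal{X}'$ is proper, $h$ is defined at every codimension-one point of $\cal{X}$ by the valuative criterion of properness, so its indeterminacy locus has codimension $\geq 2$ in $\cal{X}$; since $h$ restricts to the identity on the generic fibre, this locus lies in $\cal{X}_k$ and is therefore a finite union of curves (and points). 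The same applies to $h^{-1}$. Hence $h$ restricts to a birational map $\bar h\colon\cal{X}_k\dashrightarrow\cal{X}'_k$ which, being a birational map between K3 or Enriques surfaces, is an isomorphism. By uniqueness of extensions of rational maps from integral spaces, the assignment $(\cal{X},\cal{X}')\mapsto\bar h$ is compatible with composition and inverses, so it identifies the special fibre of every smooth model with $X^\circ$ in a coherent way.

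Now fix $g\in\Aut(X)$, and let $\tilde g\colon\cal{X}\dashrightarrow\cal{X}$ and $\tilde g'\colon\cal{X}'\dashrightarrow\cal{X}'$ be the birational self-maps extending $g$, each defined away from finitely many curves in the special fibre by the preceding lemma, so that $\mathrm{sp}_{\cal{X}}(g)$ and $\mathrm{sp}_{\cal{X}'}(g)$ are their restrictions to $\cal{X}_k$ and $\cal{X}'_k$. Consider the two rational maps $h\circ\tilde g$ and $\tilde g'\circ h$ from $\cal{X}$ to $\cal{X}'$; both are defined on a common open $V\subseteq\cal{X}$ whose complement is a one-dimensional closed subset of $\cal{X}_k$. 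On the generic fibre they both restrict to $g$, because $h$ and $h^{-1}$ restrict to the identity there. Since $\cal{X}$ is integral with scheme-theoretically dense generic fibre (flatness over $\cal{O}_K$ together with irreducibility of the fibres), $V$ is reduced and $V_K$ is dense in $V$; as $\cal{X}'$ is separated, it follows that $h\circ\tilde g=\tilde g'\circ h$ on all of $V$. Restricting to the special fibre gives $\bar h\circ\mathrm{sp}_{\cal{X}}(g)=\mathrm{sp}_{\cal{X}'}(g)\circ\bar h$ as birational maps $X^\circ\dashrightarrow X^\circ$ agreeing on a dense open; since every map here is an automorphism of $X^\circ$, we get $\mathrm{sp}_{\cal{X}'}(g)=\bar h\circ\mathrm{sp}_{\cal{X}}(g)\circ\bar h^{-1}$. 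Thus, under the coherent identification of the special fibres with $X^\circ$, we have $\mathrm{sp}_{\cal{X}'}=\mathrm{sp}_{\cal{X}}$.

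The main obstacle is the first paragraph: one must know that a birational map between two smooth, proper, three-dimensional algebraic spaces over $\cal{O}_K$, together with its inverse, is defined away from codimension two, so that on the special fibres it only contracts finitely many curves. For schemes this is the standard fact that a rational map from a normal variety to a proper variety is regular outside codimension two; for algebraic spaces one argues the same way using the valuative criterion of properness at codimension-one points. Once this is established, the identity $h\circ\tilde g=\tilde g'\circ h$ is automatic from reducedness and separatedness, and the conjugation formula — hence the independence of $\mathrm{sp}_{\cal{X}}$ — follows formally.
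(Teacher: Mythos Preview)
Your argument is correct. The paper itself gives no proof here beyond citing \cite[Proposition 2.2(2)]{matsumoto_2021_extendability_automorphisms_k3surfaces}, so you have in fact spelled out what the paper leaves as a black-box reference; the approach you take---linking two smooth models by the birational map extending the identity of $X$, observing its indeterminacy has codimension $\geq 2$, and checking that the induced isomorphism of special fibres conjugates one specialization into the other---is the standard one.
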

\begin{proof}
    The proof is the same as \cite[Proposition 2.2(2)]{matsumoto_2021_extendability_automorphisms_k3surfaces}.
\end{proof}

Given this lemma, we may write $$\mathrm{sp}:\Aut(X)\to\Aut(X^\circ)\ \ \ g\mapsto g_k.$$ Then any group acting on $X$ also acts on $X^\circ$ via specialization. In particular, let $L/K$ be a finite Galois extension. Let $k_L$ be the residue field of $K$. The $G_{L/K}$-action on $X_L$ specializes to a $G_{L/K}$-action on $X^\circ_{k_L}$. This factors through the $G_{k_L/k}$-action on $X^\circ_{k_L}$ via the surjection $G_{L/K}\to G_{k_L/k}$.

\subsection{RDP models and Weyl groups}\label{subsection RDP models and Weyl groups}

Suppose $X$ is a K3 or Enriques surface over $K$ with good reduction. Although the canonical reduction of $X$ is unique up to isomorphism, there is in general no canonical choice of smooth models of $X$. However, once a polarization $\cal{L}$ of $X$ is fixed, there is a canonical of model of $X$, called the RDP model $P(X,\cal{L})$. It may admit mild singularities on the special fibre but we can still recover the canonical reduction from its special fibre.

\begin{thm}\label{RDP models}
    Let $X$ be a K3 or Enriques surface with good reduction. Let $\cal{L}$ be a polarization of $X$. Then there exists a projective model $P(X,\cal{L})$ of $X$ such that
    \begin{enumerate}
        \item $P(X,\cal{L})$ only depends on the pair $(X,\cal{L})$ up to unique isomorphism.
        \item The special fibre $P(X,\cal{L})_k$ has at worst rational double points, and its minimal desingularization coincides with $X^\circ$.
    \end{enumerate}
\end{thm}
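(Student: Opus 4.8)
The plan is to construct $P(X,\cal{L})$ as the relative ample model of $\cal{L}$ over $\cal{O}_K$ and then to check that the construction depends on nothing.

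First I would fix any smooth model $\cal{X}$ of $X$; it exists by hypothesis, and by \cite[Proposition 4.7]{liedtke_2017_good_reduction_k3surfaces} its special fibre is $X^\circ$. Since $\cal{X}$ is regular, every Weil divisor on it is Cartier, so writing $\cal{L}\cong\cal{O}_X(D)$ and taking the schematic closure $\overline{D}$ of $D$ in $\cal{X}$ yields a line bundle $\widetilde{\cal{L}}:=\cal{O}_{\cal{X}}(\overline{D})$ with $\widetilde{\cal{L}}|_X\cong\cal{L}$; this extension is unique, because a line bundle on $\cal{X}$ trivial on $X$ is supported on the special fibre, which is the irreducible principal divisor $X^\circ$, hence trivial. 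Set $R:=\bigoplus_{n\ge0}f_*\widetilde{\cal{L}}^{\otimes n}$ and $P(X,\cal{L}):=\Proj_{\cal{O}_K} R$. I claim $R$, hence $P(X,\cal{L})$, is independent of $\cal{X}$. Any two smooth models of $X$ are isomorphic away from codimension $\ge 2$: the identity of $X$ extends over the codimension-one (discretely valued) points of one model to the other by the valuative criterion of properness, its restriction to special fibres is a birational self-map of the minimal surface $X^\circ$ and hence an automorphism, and the same applies to the inverse; so the two models have the same divisorial behaviour, and since both are normal and the extension of $\cal{L}$ is unique, the $\cal{O}_K$-submodules $f_*\widetilde{\cal{L}}^{\otimes n}\subseteq H^0(X,\cal{L}^{\otimes n})$ coincide for all $n$. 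As $f$ is flat and proper, $R\otimes_{\cal{O}_K}K=\bigoplus_n H^0(X,\cal{L}^{\otimes n})$, so the generic fibre of $P(X,\cal{L})$ is $\Proj$ of the section ring of the ample bundle $\cal{L}$, namely $X$.

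To analyse the special fibre I would choose $\cal{X}$ well. Using the theory of flops of smooth models of K3 and Enriques surfaces (\cite[Section 4]{liedtke_2017_good_reduction_k3surfaces} and its extension used in \cite[Proposition 6.1]{matsumoto_2021_extendability_automorphisms_k3surfaces}) I would arrange that $\cal{M}:=\widetilde{\cal{L}}|_{X^\circ}$ is nef. It is also big: $\cal{M}^2=\cal{L}^2>0$ by constancy of intersection numbers in the flat family, and a nonzero nef class of positive self-intersection on a K3 or Enriques surface is effective by Riemann--Roch (the canonical class being numerically trivial), hence big. For such $\cal{M}$ one has $H^1(X^\circ,\cal{M}^{\otimes n})=0$ and $\cal{M}$ is semiample, so $\bigoplus_n H^0(X^\circ,\cal{M}^{\otimes n})$ is a finitely generated $k$-algebra; therefore $R$ is a finitely generated $\cal{O}_K$-algebra with $\cal{O}_K$-flat graded pieces, $P(X,\cal{L})$ is projective and flat over $\cal{O}_K$, and cohomology and base change give $P(X,\cal{L})_k=\Proj\bigoplus_n H^0(X^\circ,\cal{M}^{\otimes n})$, the contraction of $X^\circ$ collapsing exactly the curves $C$ with $\cal{M}\cdot C=0$. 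On a K3 or Enriques surface such curves are $(-2)$-curves forming ADE configurations, whose contraction produces rational double points and whose minimal resolution recovers $X^\circ$; this is (2).

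Finally, (1) holds because $R$ is functorial in the polarized surface $(X,\cal{L})$, so an isomorphism of pairs induces an isomorphism of the $\Proj$'s; this isomorphism is unique because $P(X,\cal{L})$ is integral and flat over $\cal{O}_K$, so its generic fibre $X$ is schematically dense and any self-map of $P(X,\cal{L})$ restricting to the identity on $X$ is the identity. The part I expect to be the real work is arranging the nefness of $\cal{M}$ by flops, together with the vanishing $H^1(X^\circ,\cal{M}^{\otimes n})=0$ and the semiampleness of big nef line bundles: these are classical for K3 and Enriques surfaces in characteristic $0$ and in odd characteristic, but for Enriques surfaces in characteristic $2$ (classical, singular, and supersingular alike) one must invoke the corresponding results for that class of surfaces, which is where the argument is most delicate.
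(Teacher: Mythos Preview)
Your construction is correct and is essentially the content of the references the paper cites: the paper's own proof of this theorem consists entirely of pointers to \cite[Propositions 4.6 and 4.7]{liedtke_2017_good_reduction_k3surfaces}, and what you have written is a faithful unpacking of those propositions (take an $\cal{L}$-terminal smooth model via flops, form the relative $\Proj$ of the section ring, identify the special fibre as the anticanonical/semiample contraction of $X^\circ$, and observe that the section ring is model-independent by normality and isomorphism in codimension one). Your caveat about Enriques surfaces in characteristic $2$ is well placed, and your argument that two smooth models agree in codimension one could be stated more cleanly by noting that neither direction of the birational map can contract a divisor (the only vertical prime divisor is the irreducible special fibre $X^\circ$, which cannot map to something lower-dimensional since the target special fibre is again $X^\circ$); but these are matters of exposition, not substance.
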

\begin{proof}
    (2) is by \cite[Proposition 4.6]{liedtke_2017_good_reduction_k3surfaces} and (1) follows from the proof of \cite[Proposition 4.7]{liedtke_2017_good_reduction_k3surfaces}.
\end{proof}

Let $X$ be a K3 or Enriques surface over $K$ with good reduction. Fix a polarization $\cal{L}$ of $X$. Then $P(X,\cal{L})_k$ is a surface over $k$ with at worst rational double points and $X^\circ\to P(X,\cal{L})_k$ is its minimal desingularization. We denote the exceptional locus as $E_{X,\cal{L}}\subset X^\circ$.

Let $\Lambda_{X,\cal{L}}$ be the subgroup of $\Pic(X^\circ)$ generated by the classes of all the integral components of $E_{X,\cal{L}}\subset X^\circ$. For every such component $E_i$, denote $n_i=h^0(E_i,\cal{O}_{E_i})$. Let $s_{E_i}$ be the automorphism of either $\Lambda_{X,\cal{L}}$ or $\Pic(X^\circ)$ defined by $$s_{E_i}:D\mapsto D+\frac{1}{n_i}(D.E_i)E_i$$

\begin{defn}
    Let $\Pic(X^\circ)_{\bb{Q}}=\Pic(X^\circ)\otimes\bb{Q}$. The Weyl group $\cal{W}_{X,\cal{L}}$ is the subgroup of $\Aut(\Pic(X^\circ)_{\bb{Q}})$ generated by $s_{E_i}$ for all integral components $E_i$ of $E_{X,\cal{L}}$.
\end{defn}

\begin{rem}
    In \cite[Section 3]{Chiarellotto_2019_neron_ogg_shafarevich_criterion_k3surface}, the Weyl group of a K3 surface is defined as the subgroup of $\Aut(\Pic(X^\circ))$ generated by all $s_{E_i}$. To generalize this definition to Enriques surfaces, we need to replace $\Pic(X^\circ)$ by $\Pic(X^\circ)_{\bb{Q}}$. In particular, if $X$ is a K3 surface, this agrees with the Weyl group defined in \cite[Section 3]{Chiarellotto_2019_neron_ogg_shafarevich_criterion_k3surface}, since $\Pic(X^\circ)$ embeds into $\Pic(X^\circ)_{\bb{Q}}$.
\end{rem}

\subsection{Group cohomology with coefficients in a non-abelian group}\label{1-cocycle sub-section}

We recall the construction of group cohomology with coefficients in a non-abelian group in \cite[Section 5.1, Chapter I]{SerreGaloisCohomology}.

\begin{defn}
    Suppose $G$ is a group acting on a possibly non-abelian group $A$ on the left.
    \begin{enumerate}
        \item An $A$-valued 1-cocycle for $G$ is a function $\phi:G\to A$ such that for every $g,h\in G$,           $$\phi(gh)=\phi(g)g(\phi(h)).$$
        \item Given two 1-cocycles $\phi$ and $\phi'$, we say $\phi\sim\phi'$ if there exists $a\in A$ such that for all $g\in G$,
            $$a\phi(g)=\phi'(g)g(a).$$
        \item Define $H^1(G,A)=\{\text{$A$-valued $1$-cocycles for $G$}\}/\sim$. This is a pointed set, with base point represented by the trivial cocycle $G\to A$, which maps every $g\in G$ to the identity in $A$.
    \end{enumerate}
\end{defn}

Next, suppose $M$ is a left $G$-module, so $G$ acts on the possibly non-abelian group $\Aut(M)$ on the left by conjugation. Denote the action of $G$ on $M$ by $\rho:G\to \Aut(M)$.

\begin{defn}
    Let $\phi:G\to \Aut(M)$ be an $\Aut(M)$-valued 1-cocycle of $G$. Define $M^\phi$, the twist of $M$ by $\phi$, as a $G$-module:
    \begin{enumerate}
        \item The underlying group is $M$.
        \item The action $\rho^\phi:G\to \Aut(M^\phi)$ is defined for every $g\in G$, $m\in M$,
        $$\rho^\phi(g)(m)=\phi(g)(\rho(g)(m)).$$
    \end{enumerate}
\end{defn}

\begin{lem}\label{twist a module}
    Let $\phi:G\to \Aut(M)$ be an $\Aut(M)$-valued 1-cocycle of $G$. Then the cohomology class represented by $\phi$ is trivial if and only if there exists a $G$-isomorphism $$M\cong M^\phi.$$
\end{lem}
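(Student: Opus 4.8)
The plan is to unwind the definitions on both sides and match a trivializing element of $A = \Aut(M)$ for the cocycle $\phi$ with a $G$-isomorphism $M \to M^\phi$. First I would spell out what a $G$-isomorphism $f : M \to M^\phi$ means: since the underlying groups of $M$ and $M^\phi$ agree, $f$ is an automorphism of the abelian group $M$, i.e. $f \in \Aut(M) = A$, and the $G$-equivariance condition $f(\rho(g)(m)) = \rho^\phi(g)(f(m))$ becomes, by the definition of $\rho^\phi$, the identity $f \circ \rho(g) = \phi(g) \circ \rho(g) \circ f$ in $\Aut(M)$ for every $g \in G$. Rewriting this as $f \cdot \rho(g) \cdot f^{-1} = \phi(g) \cdot \rho(g)$, i.e. ${}^{f}(\rho(g)) = \phi(g)\,\rho(g)$, and recalling that the left $G$-action on $A = \Aut(M)$ is conjugation via $\rho$ — so $g(a) = \rho(g)\,a\,\rho(g)^{-1}$ — I would reorganize the equation $f\,\rho(g) = \phi(g)\,\rho(g)\,f$ into $f = \phi(g)\cdot\big(\rho(g)\,f\,\rho(g)^{-1}\big) = \phi(g)\cdot g(f)$, which is exactly the relation $f = \phi(g)\,g(f)$ for all $g$.

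Now compare this with the triviality condition for $\phi$. The cocycle $\phi$ represents the base point of $H^1(G,A)$ iff $\phi \sim 1$, i.e. there exists $a \in A$ with $a \cdot 1 = \phi(g)\cdot g(a)$ for all $g$, that is $a = \phi(g)\,g(a)$ for all $g \in G$. Thus the existence of a trivializing element $a$ for $\phi$ is literally the same equation (in the variable $a = f$) as the existence of a $G$-isomorphism $f : M \to M^\phi$. So for the forward direction, given that $\phi$ is cohomologically trivial, pick such an $a$; then $f := a \in \Aut(M)$ satisfies $f\,\rho(g) = \phi(g)\,g(f)\,\rho(g) = \phi(g)\,\rho(g)\,f\,\rho(g)^{-1}\,\rho(g) = \phi(g)\,\rho(g)\,f = \rho^\phi(g)\,f$, so $f$ is a $G$-equivariant bijection, hence a $G$-isomorphism $M \cong M^\phi$. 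Conversely, a $G$-isomorphism $f$ gives, by the computation above, an element $a = f \in A$ with $a = \phi(g)\,g(a)$ for all $g$, which witnesses $\phi \sim 1$.

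This is essentially a formal bookkeeping argument, so there is no genuinely hard step; the only thing to be careful about is the direction of the actions and the conjugation convention — making sure that "$G$ acts on $\Aut(M)$ on the left by conjugation" is used consistently as $g(a) = \rho(g)\,a\,\rho(g)^{-1}$, and that the cocycle and equivalence relations from the definition ($\phi(gh) = \phi(g)\,g(\phi(h))$ and $a\phi(g) = \phi'(g)\,g(a)$) are applied with the group elements composed in the order matching our convention for composing automorphisms of $M$. I would also remark in passing that one does not even need the cocycle condition on $\phi$ for this particular equivalence — it is purely about the trivialization equation — though of course $\phi$ being a cocycle is what makes $M^\phi$ a well-defined $G$-module in the first place (that is where $\rho^\phi(gh) = \rho^\phi(g)\rho^\phi(h)$ uses $\phi(gh) = \phi(g)g(\phi(h))$), so I would either have verified that in the preceding definition or note it here.
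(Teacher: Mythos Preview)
Your proof is correct and takes essentially the same approach as the paper: both arguments unwind the definitions of ``cohomologically trivial'' and ``$G$-isomorphism $M \cong M^\phi$'' and show they amount to the same equation in $\Aut(M)$. The only cosmetic difference is that you use the relation $a = \phi(g)\,g(a)$ (coming from $1 \sim \phi$, though you label it $\phi \sim 1$) and hence match the trivializing element with the $G$-isomorphism directly, while the paper uses $\phi(g) = f^{-1}\,g(f)$ and matches via $f = h^{-1}$; by symmetry of $\sim$ these are the same.
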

\begin{proof}
    By definition, the cohomology class represented by $\phi$ is trivial if and only if there exists $f\in \Aut(M)$ such that for every $g\in G$, $$\phi(g)=f^{-1}\cdot g(f)=f^{-1}\cdot \rho(g)\cdot f\cdot \rho(g)^{-1}.$$ Next, a $G$-isomorphism $h:M\to M^\phi$ is a group automorphism of $M$ such that for every $m\in M$,
    \begin{align*}
        & \rho^\phi(g)(h(m))=h(\rho(g)(m))\\
        \iff & (\phi(g)\cdot\rho(g)\cdot h)(m)=(h\cdot \rho(g))(m)
    \end{align*}
    Therefore, $\phi(g)=h\cdot\rho(g)\cdot h^{-1}\cdot\rho(g)^{-1}$. Then the two conditions are equivalent upon taking $f=h^{-1}$.
\end{proof}

\section{Extendability and non-abelian group cohomology}

To prove Theorem \ref{main theorem} and Theorem \ref{linear case main theorem}, we define a cohomology class of $G$ that controls the extendability of the $G$-action.

\subsection{Extendability of birational maps}\label{s_f construction subsection} The constructions in this sub-section are based on \cite[Section 7.1]{Chiarellotto_2019_neron_ogg_shafarevich_criterion_k3surface}. Let $X$ be a K3 or Enriques surface over $K$ with good reduction. Fix a smooth model $\cal{X}$ with special fibre $X^\circ$.

Given a birational map $f:\cal{X}\dashrightarrow\cal{X}^+$ between two smooth models of $X$, we construct an endomorphism of $\Pic(X^\circ)_{\bb{Q}}$. Denote by $\Gamma_f\subset \cal{X}\times \cal{X}^+$ the graph of $f$ and by $\Gamma_{f,k}\subset X^\circ\times\cal{X}^+_k$ its special fibre. Let $p_1$ and $p_2$ be the projections from $X^\circ\times\cal{X}_k^+$ to $X^\circ$ and $\cal{X}_k^+$ respectively. Then we define a homomorphism
$$\Pic(\cal{X}^+_k)\to\Pic(X^\circ)\ \ \ D\mapsto p_{1*}(\Gamma_{f,k}\cap p_2^*D),$$
which extends by linearity to $\tilde{s}_f:\Pic(\cal{X}^+_k)_{\bb{Q}}\to\Pic(X^\circ)_{\bb{Q}}$.

Also, by the uniqueness of the canonical reduction, we have an isomorphism $f_k:X^\circ\xrightarrow{\sim}\cal{X}^+_k$. We precompose $\tilde{s}_f$ with the pullback via $f_k^{-1}$ and get an endomorphism $s_f$ of $\Pic(X^\circ)_{\bb{Q}}$,
$$s_f:\Pic(X^\circ)_{\bb{Q}}\xrightarrow{(f_k^*)^{-1}}\Pic(\cal{X}^+_k)_{\bb{Q}}\xrightarrow{\tilde{s}_f}\Pic(X^\circ)_{\bb{Q}}.$$

Let $\ell\neq p$ be a prime number. The cycle class map defines an injection $[-]:\Pic(X^\circ)_{\bb{Q}}\to H^2_{\et}(X^\circ_{\overline{k}},\bb{Q}_{\ell}(1))$. We extend $s_f$ to $H^2_{\et}(X^\circ_{\overline{k}},\bb{Q}_{\ell}(1))$ as follows. First, we define a homomorphism
$$\tilde{s}_{f,\ell}:H^2_{\et}(\cal{X}^+_{\overline{k}},\bb{Q}_{\ell}(1))\to H^2_{\et}(X^\circ_{\overline{k}},\bb{Q}_{\ell}(1))\ \ \ \alpha\mapsto p_{1*}([\Gamma_{f,k}]\cup p_2^*\alpha).$$ This is well defined since $[\Gamma_{f,k}]\in H^4_{\et}(X^\circ_{\overline{k}}\times\cal{X}_{\overline{k}}^+,\bb{Q}_\ell(2))$, $p_2^*\alpha\in H^2_{\et}(X^\circ_{\overline{k}}\times\cal{X}_{\overline{k}}^+,\bb{Q}_\ell(1))$, $[\Gamma_{f,k}]\cup p_2^*\alpha\in H^6_{\et}(X^\circ_{\overline{k}}\times\cal{X}_{\overline{k}}^+,\bb{Q}_\ell(3))$, and $p_{1*}([\Gamma_{f,k}]\cup p_2^*\alpha)\in H^2_{\et}(X^\circ_{\overline{k}},\bb{Q}_\ell(1))$.

Similarly, we precompose $\tilde{s}_{f,\ell}$ with the pullback via $f_k^{-1}$ to get an endomorphism $s_{f,\ell}$ of $H^2_{\et}(X^\circ_{\overline{k}},\bb{Q}_\ell(1))$,

$$s_{f,\ell}:H^2_{\et}(X^\circ_{\overline{k}},\bb{Q}_\ell(1))\xrightarrow{(f_k^*)^{-1}}H^2_{\et}(\cal{X}^+_{\overline{k}},\bb{Q}_\ell(1))\xrightarrow{\tilde{s}_{f,\ell}}H^2_{\et}(X^\circ_{\overline{k}},\bb{Q}_\ell(1)).$$

By \cite[Corollary 10.7]{Etale_cohomology_milne_book}, the cycle class map sends intersection products to cup products and is compatible with pullback. By \cite[Theorem 11.1]{Etale_cohomology_milne_book}, the cycle class map is compatible with the trace map, therefore compatible with pushforward. Hence, the following diagram commutes. 
\begin{equation}\label{l-adic analogue}
\xymatrix{
\Pic(X^\circ_{\overline{k}})_{\bb{Q}} 
  \ar[r]^{(f_k^*)^{-1}} \ar[d]_{[-]} &
\Pic(\cal{X}^+_{\overline{k}})_{\bb{Q}}  
  \ar[r]^{\tilde{s}_f} \ar[d]_{[-]} &
\Pic(X^\circ_{\overline{k}})_{\bb{Q}} 
  \ar[d]_{[-]} \\
H^2_{\et}(X^\circ_{\overline{k}},\bb{Q}_{\ell}(1)) 
  \ar[r]^{(f_k^*)^{-1}} &
H^2_{\et}(\cal{X}^+_{\overline{k}},\bb{Q}_{\ell}(1)) 
  \ar[r]^{\tilde{s}_{f,\ell}} &
H^2_{\et}(X^\circ_{\overline{k}},\bb{Q}_{\ell}(1))
}
\end{equation}

Also, we have the following characterization of $\tilde{s}_{f,\ell}$.

\begin{lem}\cite[Lemma 5.6]{liedtke_2017_good_reduction_k3surfaces}\label{generic pullback}
    Let $\phi$ be the comparison isomorphism provided by the smooth base change theorem. Then the following diagram commutes. $$\xymatrix{H^2_{\et}(\cal{X}^+_{\overline{k}},\bb{Q}_{\ell}(1)) \ar@{->}[r]^\phi \ar@{->}[d]_{\tilde{s}_{f,\ell}} & H^2_{\et}(\cal{X}^+_{K^{\mathrm{s}}},\bb{Q}_{\ell}(1)) \ar@{->}[d]_{f_K^*}\\ H^2_{\et}(X^\circ_{\overline{k}},\bb{Q}_{\ell}(1)) \ar@{->}[r]^\phi & H^2_{\et}(X_{K^{\mathrm{s}}},\bb{Q}_{\ell}(1))}$$ Therefore, $s_{f,\ell}$ decomposes as follows 
    \begin{align*}
        s_{f,\ell}: & H^2_{\et}(X^\circ_{\overline{k}},\bb{Q}_{\ell}(1)) \xrightarrow{(f_k^*)^{-1}} H^2_{\et}(\cal{X}^+_{\overline{k}},\bb{Q}_{\ell}(1))\xrightarrow{\phi} H^2_{\et}(X^+_{K^{\mathrm{s}}},\bb{Q}_{\ell}(1)) \\ & \xrightarrow{f^*_K} H^2_{\et}(X_{K^{\mathrm{s}}},\bb{Q}_{\ell}(1))\xrightarrow{\phi^{-1}} H^2_{\et}(X^\circ_{\overline{k}},\bb{Q}_{\ell}(1))
    \end{align*}
\end{lem}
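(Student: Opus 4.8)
The plan is to prove the commutativity of the square by a correspondence computation carried out over $\cal{O}_K$; the displayed decomposition of $s_{f,\ell}$ is then a formality, since by its very construction $s_{f,\ell}=\tilde s_{f,\ell}\circ(f_k^*)^{-1}$, so once the square commutes one substitutes its content $\tilde s_{f,\ell}=\phi^{-1}\circ f_K^*\circ\phi$ (with the two $\phi$'s the comparison isomorphisms for $\cal{X}^+$ and for $\cal{X}$) and reads off the stated composite. So the whole content is the square.

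First I would realise $[\Gamma_{f,k}]$ and the graph $[\Gamma_{f_K}]$ of $f_K$ as the two geometric-fibre restrictions of a single cycle class over $\cal{O}_K$. Set $\cal{Y}=\cal{X}\times_{\cal{O}_K}\cal{X}^+$, a smooth proper algebraic space over $\cal{O}_K$ of relative dimension $4$, with projections $\mathrm{pr}_1,\mathrm{pr}_2$ restricting on special fibres to the maps $p_1,p_2$ of the construction, and let $\Gamma_f\subset\cal{Y}$ be the graph of $f$, i.e.\ the closure of the graph of $f$ over the open locus of $\cal{X}$ where $f$ is defined. Being the closure of an integral subspace of the generic fibre, $\Gamma_f$ is integral and dominates $\Spec\cal{O}_K$, hence is flat over $\cal{O}_K$, of relative dimension $2$ (so codimension $2$ in $\cal{Y}$). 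Its generic fibre is the graph of the birational self-map of $X$ induced by $f$; since $X$ is a K3 or Enriques surface this self-map is an isomorphism $f_K$, so $\Gamma_{f,K}=\Gamma_{f_K}$, and $\mathrm{pr}_1$ restricts to an isomorphism $\Gamma_{f_K}\xrightarrow{\sim}X$ intertwining $\mathrm{pr}_2$ with $f_K$. Finally, flatness of $\Gamma_f$ over $\cal{O}_K$ means $\Gamma_{f,k}$ is cut out on $\Gamma_f$ by a uniformiser and meets each geometric fibre of $\cal{Y}$ properly, so that the Gysin restrictions of the cycle class $[\Gamma_f]$ (formed in the cohomology of $\cal{Y}$ over a strictly henselian local $\cal{O}_K$-algebra $R$ with residue field $\overline{k}$) to the special and generic geometric fibres of $\cal{Y}$ are $[\Gamma_{f,k}]$ and $[\Gamma_{f_K}]$ respectively.

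Next I would invoke the standard properties of the comparison isomorphism: for a smooth proper $\cal{Z}/\cal{O}_K$, $\phi$ is the composite of the specialisation isomorphism $H^*_{\et}(\cal{Z}_R,\bb{Q}_\ell)\xrightarrow{\sim}H^*_{\et}(\cal{Z}_{\overline{k}},\bb{Q}_\ell)$ (proper base change over the henselian base $R$) with the cospecialisation isomorphism $H^*_{\et}(\cal{Z}_R,\bb{Q}_\ell)\xrightarrow{\sim}H^*_{\et}(\cal{Z}_{K^{\mathrm{s}}},\bb{Q}_\ell)$ (smooth proper base change), and both restriction maps are ring homomorphisms compatible with arbitrary pullback, with proper pushforward, and with cycle class maps — exactly the properties of \cite[Corollary 10.7, Theorem 11.1]{Etale_cohomology_milne_book} already used for \eqref{l-adic analogue} — so $\phi$ inherits all of them. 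Applying this to $\cal{X}$, $\cal{X}^+$ and $\cal{Y}=\cal{X}\times\cal{X}^+$, the corresponding $\phi$'s are compatible with $\mathrm{pr}_1^*,\mathrm{pr}_2^*$, with cup products, and with $\mathrm{pr}_{1*}$, and $\phi([\Gamma_{f,k}])=[\Gamma_{f_K}]$ by the previous paragraph. Hence for $\alpha\in H^2_{\et}(\cal{X}^+_{\overline{k}},\bb{Q}_\ell(1))$,
\[
\phi\bigl(\tilde s_{f,\ell}(\alpha)\bigr)=\phi\bigl(\mathrm{pr}_{1*}([\Gamma_{f,k}]\cup\mathrm{pr}_2^*\alpha)\bigr)=\mathrm{pr}_{1*}\bigl([\Gamma_{f_K}]\cup\mathrm{pr}_2^*\phi(\alpha)\bigr)=f_K^*\bigl(\phi(\alpha)\bigr),
\]
the last equality being the correspondence formula for $f_K^*$: since $\mathrm{pr}_1|_{\Gamma_{f_K}}$ is an isomorphism and $\mathrm{pr}_2|_{\Gamma_{f_K}}$ corresponds to $f_K$, the projection formula gives $\mathrm{pr}_{1*}([\Gamma_{f_K}]\cup\mathrm{pr}_2^*\beta)=f_K^*\beta$. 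This is precisely the commutativity of the square.

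The argument uses no input beyond proper and smooth proper base change and the usual functorialities of $\ell$-adic cycle classes; the delicate points are purely bookkeeping. One must check that $\Gamma_f$ is honestly $\cal{O}_K$-flat, so that $[\Gamma_f]$ restricts to the intended cycle classes with the correct multiplicities, and that the cycle-class formalism and its base-change compatibilities are available for algebraic spaces and for the possibly singular $\Gamma_f$ — for the latter it is essential that one only ever forms the cycle class of $\Gamma_f$ inside the ambient smooth $\cal{Y}$, never on $\Gamma_f$ itself. I expect this bookkeeping, rather than any conceptual difficulty, to be the main obstacle. As the statement is \cite[Lemma 5.6]{liedtke_2017_good_reduction_k3surfaces}, one may alternatively just cite that reference.
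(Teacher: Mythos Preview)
The paper does not prove this lemma: it is stated with a citation to Liedtke--Matsumoto and used as a black box. Your argument is correct and is the standard correspondence computation one would expect to find there --- spread the graph $\Gamma_f$ over $\cal{O}_K$, use its flatness over the DVR to identify the fibrewise cycle classes, and invoke the compatibility of the smooth/proper base change isomorphism with pullback, proper pushforward, cup product, and cycle classes. Your closing remark that one may simply cite the reference is exactly what the paper does.
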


\begin{prop}\cite[c.f.][Proposition 7.4]{Chiarellotto_2019_neron_ogg_shafarevich_criterion_k3surface}\label{properties of s_f}
    \begin{enumerate}
        \item For every $\alpha_1,\alpha_2\in H^2_{\et}(X^\circ_{\overline{k}},\bb{Q}_{\ell}(1))$, $$s_{f,\ell}(\alpha_1)\cup s_{f,\ell}(\alpha_2)=\alpha_1\cup\alpha_2.$$ In particular, for every $D_1,D_2\in \Pic(X^\circ)_{\bb{Q}}$, $$s_f(D_1).s_f(D_2)=D_1.D_2.$$
        \item For every two composable birational maps $f$ and $g$, $$s_{g\cdot f,\ell}=s_{f,\ell}\cdot f_k^*\cdot s_{g,\ell}\cdot (f_k^*)^{-1},$$ $$s_{g\cdot f}=s_f\cdot f_k^*\cdot s_g\cdot (f_k^*)^{-1}.$$
        \item For every birational map $f$, $s_{f,\ell}$ and $s_f$ are invertible with $$(s_{f,\ell})^{-1}=f_k^*\cdot s_{f^{-1},\ell}\cdot (f_k^*)^{-1},$$
        $$(s_f)^{-1}=f_k^*\cdot s_{f^{-1}}\cdot (f_k^*)^{-1}.$$
    \end{enumerate}
\end{prop}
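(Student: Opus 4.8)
The plan is to prove the three assertions by transferring everything to $\ell$-adic cohomology via the commutative diagram \eqref{l-adic analogue} and the decomposition of $s_{f,\ell}$ in Lemma \ref{generic pullback}, since each of $(f_k^*)^{-1}$, $\phi$, $f_K^*$, $\phi^{-1}$ is an isomorphism of cohomology groups, and the statements about $s_f$ on $\Pic(X^\circ)_{\bb{Q}}$ then follow by restricting along the injective cycle class map $[-]$.

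For part (1), I would use the decomposition $s_{f,\ell}=\phi^{-1}\circ f_K^*\circ\phi\circ(f_k^*)^{-1}$ from Lemma \ref{generic pullback}. The comparison isomorphism $\phi$ respects the cup product pairing, and so does the pullback $f_k^*$ along the isomorphism $f_k:X^\circ\xrightarrow{\sim}\cal{X}^+_k$ of smooth proper surfaces (these are genuine morphisms, not just correspondences, so compatibility with cup product is standard). The only nontrivial point is that $f_K^*:H^2_{\et}(X^+_{K^{\mathrm{s}}},\bb{Q}_\ell(1))\to H^2_{\et}(X_{K^{\mathrm{s}}},\bb{Q}_\ell(1))$ preserves the cup product pairing; this holds because $f_K:X\dashrightarrow X^+$ is a birational map of smooth proper surfaces over a field, and on $H^2$ of smooth proper surfaces a birational map induces an isometry with respect to the intersection form (equivalently, $f_K$ extends to an isomorphism in codimension one, or one invokes that the transcendental-plus-numerical decomposition is preserved). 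Chaining these four isometries gives $s_{f,\ell}(\alpha_1)\cup s_{f,\ell}(\alpha_2)=\alpha_1\cup\alpha_2$, and intersecting with $[-]$ in \eqref{l-adic analogue} together with the compatibility of $[-]$ with cup/intersection products yields the statement for $s_f$.

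For part (2), I would again pass to the decomposition and compute directly. Writing $s_{f,\ell}=\phi^{-1} f_K^* \phi (f_k^*)^{-1}$ and $s_{g,\ell}=\phi^{-1} g_K^* \phi (g_k^*)^{-1}$, and using that $(g\cdot f)_K^* = f_K^* g_K^*$ and $(g\cdot f)_k^* = f_k^* g_k^*$ (contravariance of pullback, noting that on the special fibre $(g\cdot f)_k = g_k\circ f_k$ by functoriality of the specialization construction), one expands $s_{f,\ell}\cdot f_k^*\cdot s_{g,\ell}\cdot(f_k^*)^{-1}$ and checks the telescoping cancellation of the $\phi$'s and $f_k^*$'s reduces it to $\phi^{-1} f_K^* g_K^* \phi (f_k^* g_k^*)^{-1} = s_{g\cdot f,\ell}$. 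The $\Pic$-level identity follows by the same computation using the $\Pic$ version of the decomposition implicit in \eqref{l-adic analogue}, or simply by restricting along $[-]$. Part (3) is then the special case $g = f^{-1}$ of part (2): since $s_{\Id,\ell}=\mathrm{id}$ (the graph of the identity is the diagonal, and $p_{1*}([\Delta]\cup p_2^*\alpha)=\alpha$), setting $g=f^{-1}$ in part (2) gives $\mathrm{id}=s_{f^{-1}\cdot f,\ell}=s_{f,\ell}\cdot f_k^*\cdot s_{f^{-1},\ell}\cdot(f_k^*)^{-1}$, which rearranges to $(s_{f,\ell})^{-1}=f_k^*\cdot s_{f^{-1},\ell}\cdot(f_k^*)^{-1}$; invertibility of $s_f$ is identical.

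The main obstacle is the claim in part (1) that $f_K^*$ on $H^2$ is an isometry for a birational map of smooth proper surfaces, together with the dual verification that the composite actually computes $s_{f,\ell}$ faithfully — i.e. that nothing is lost in the correspondences $\Gamma_{f,k}$ despite $f$ being only birational. I expect to handle the isometry point either by citing the standard fact that birational maps of smooth projective surfaces factor through blow-ups and blow-downs (each of which induces an isometric embedding on $H^2$, and the composite one here is an isomorphism since both sides have the same dimension), or by using that $f_K$ is an isomorphism away from codimension $\ge 2$ on these minimal surfaces of Kodaira dimension zero so that $f_K^*$ is literally induced by an open immersion's complement. Everything else is bookkeeping with the diagrams already established. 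Following \cite[Proposition 7.4]{Chiarellotto_2019_neron_ogg_shafarevich_criterion_k3surface} closely, the arguments should be short once the isometry input is in place.
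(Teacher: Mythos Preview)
Your proposal is correct and follows essentially the same route as the paper: reduce to $s_{f,\ell}$ via the injectivity of $[-]$ in diagram~\eqref{l-adic analogue}, then use the decomposition $s_{f,\ell}=\phi^{-1}\circ f_K^*\circ\phi\circ(f_k^*)^{-1}$ from Lemma~\ref{generic pullback} and note that each factor is compatible with cup products and composition, with (3) following from (2). Your one worry is unnecessary: $f_K$ is a birational map between K3 or Enriques surfaces, which are minimal, so $f_K$ is in fact an isomorphism and $f_K^*$ is trivially an isometry.
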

{\begin{proof}
    This proposition generalizes \cite[Proposition 7.4]{Chiarellotto_2019_neron_ogg_shafarevich_criterion_k3surface} to Enriques surfaces essentially with the same proof if we replace $\Pic(X^\circ)$ in \cite[Proposition 7.4]{Chiarellotto_2019_neron_ogg_shafarevich_criterion_k3surface} by $\Pic(X^\circ)_{\bb{Q}} $.

    In the commutative diagram \ref{l-adic analogue}, the vertical maps are injective. Therefore, it suffices to prove these for $s_{f,\ell}$. Also, $(2)\implies(3)$, so we are left with (1) and (2). By Lemma \ref{generic pullback}, we only need to prove that pullback, pushforward, and the comparison isomorphism are compatible with cup products and composition. This is clear.
\end{proof}

Let $\cal{L}$ be a polarization of $X$. Let $f:\cal{X}\dashrightarrow\cal{X}^+$ be a birational map between two smooth models of $X$. We are interested in when $s_f$ lies in the Weyl group $\cal{W}_{X,\cal{L}}$. This leads to the following definition.

\begin{defn}
    A smooth model $\cal{X}$ of $X$ is $\cal{L}$-terminal if the specialization $\cal{L}_k$ on $\cal{X}_k$ is big and nef. Equivalently, we say $(\cal{X},\cal{L})$ is terminal.
\end{defn}

\begin{rem}
    Let $\cal{X}$ be an $\cal{L}$-terminal model of $X$. Recall that $E_{X,\cal{L}}$ is the exceptional divisors of the minimal desingularization $\cal{X}_k\to P(X,\cal{L})_k$ as in Section \ref{subsection RDP models and Weyl groups}. We can identify $E_{X,\cal{L}}$ with the curves $E\subset \cal{X}_k$ with $\cal{L}_k.E=0$ by \cite[Proposition 4.6(2)]{liedtke_2017_good_reduction_k3surfaces}.
\end{rem}

\begin{rem}\label{surgery}
    If $X$ has good reduction, $X$ always admits an $\cal{L}$-terminal model by \cite[Proposition 4.5]{liedtke_2017_good_reduction_k3surfaces}.
\end{rem}

\begin{defn}
    Suppose $\cal{X}$ is $\cal{L}$-terminal and $f: \cal{X}\dashrightarrow\cal{X}^+$ is a birational map between smooth models. Denote the pushforward of $\cal{L}$ to $\cal{X}^+_K$ by $\cal{L}^+$. Then we say $f$ is $\cal{L}$-terminal if $\cal{X}^+$ is $\cal{L}^+$-terminal.
\end{defn}

\begin{prop}\cite[c.f.][Proposition 7.5]{Chiarellotto_2019_neron_ogg_shafarevich_criterion_k3surface}\label{surjectivity}
    Let $\cal{X}$ be an $\cal{L}$-terminal model of $X$. Assume all irreducible components of $E_{X,\cal{L}}$ are geometrically irreducible. Then for any element $w\in \cal{W}_{X,\cal{L}}$, there exists an $\cal{L}$-terminal birational map $f:\cal{X}\dashrightarrow\cal{X}^+$ such that $s_f=w$.
\end{prop}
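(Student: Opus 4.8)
The plan is to reduce the general statement to the case of a single reflection $s_{E_i}$, since the Weyl group $\cal{W}_{X,\cal{L}}$ is generated by these and Proposition \ref{properties of s_f}(2) tells us that $s_f$ behaves well under composition. More precisely, writing $w = s_{E_{i_1}}\cdots s_{E_{i_r}}$, I would produce the desired $f$ by composing birational maps realizing each factor, using the multiplicativity formula to track how the conjugating pullbacks $f_k^*$ interact. The slight subtlety is that after performing a flop realizing $s_{E_{i_1}}$, the resulting model $\cal{X}^+$ must again be $\cal{L}^+$-terminal and its own exceptional configuration must again be understood, so one wants the class of the relevant single reflection to be available on the new model; this is exactly what $\cal{L}$-terminality of the map buys us, together with the identification of $E_{X,\cal{L}}$ with the curves of $\cal{L}_k$-degree zero.

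For the base case, given a single integral component $E_i \subset \cal{X}_k$ with $\cal{L}_k.E_i = 0$, I would perform the elementary flop in $E_i$: since $\cal{X}$ is $\cal{L}$-terminal, $E_i$ is a $(-2)$-curve (or a chain thereof contracted by $\cal{L}_k$) and one can contract it on the special fibre, obtaining an algebraic space with an RDP, and then take a small resolution on the other side to get a new smooth model $\cal{X}^+$ with a birational map $f:\cal{X}\dashrightarrow\cal{X}^+$. This is the "mild generalization of the flops considered in \cite[Section 4]{liedtke_2017_good_reduction_k3surfaces}" alluded to in the introduction; the content is that such flops exist in the category of algebraic spaces over $\cal{O}_K$ and that the pushforward polarization $\cal{L}^+$ is again big and nef, i.e. $f$ is $\cal{L}$-terminal. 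Then one computes directly from the definition of $\tilde{s}_f$ — the graph $\Gamma_{f,k}$ decomposes into the graph of the isomorphism away from $E_i$ plus a correction component supported over $E_i$ — that $s_f$ acts as the identity on the orthogonal complement of $E_i$ and sends $E_i \mapsto -E_i$, which is precisely $s_{E_i}$ on $\Pic(X^\circ)_{\bb{Q}}$ (here the hypothesis that components are geometrically irreducible makes $n_i = 1$, so the reflection formula simplifies).

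The inductive step then goes: apply the base case to get $f_1:\cal{X}\dashrightarrow\cal{X}^+_1$ with $s_{f_1} = s_{E_{i_1}}$ and $\cal{X}^+_1$ being $\cal{L}^+_1$-terminal; now $\cal{W}_{X,\cal{L}}$ acts on $\Pic(X^\circ)_{\bb{Q}}$ and, transporting via $(f_1)_k^*$, the remaining word $s_{E_{i_2}}\cdots s_{E_{i_r}}$ corresponds to an element of the Weyl group of $(\cal{X}^+_1,\cal{L}^+_1)$; by induction there is an $\cal{L}^+_1$-terminal $g:\cal{X}^+_1\dashrightarrow\cal{X}^+$ realizing it, and $f = g\circ f_1$ works after one checks, using Proposition \ref{properties of s_f}(2), that the conjugation by $(f_1)_k^*$ matches up with how the Weyl group elements were transported. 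The main obstacle is the geometric input in the base case: establishing that the flop in a $\cal{L}_k$-degree-zero curve exists as a smooth model over $\cal{O}_K$ and that it is again $\cal{L}$-terminal with the expected exceptional locus. Everything after that is bookkeeping with the formulas already proved in Proposition \ref{properties of s_f} and the identification of exceptional curves from the remark following the definition of $\cal{L}$-terminal maps; I would lean on \cite[Section 4]{liedtke_2017_good_reduction_k3surfaces} and its Enriques analogue for the flop construction and simply verify that the polarization-degree condition is preserved.
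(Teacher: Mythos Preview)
Your approach is essentially the same as the paper's: reduce to a single reflection $s_E$ via the composition formula in Proposition \ref{properties of s_f}(2), realize $s_E$ by the flop along the $(-2)$-curve $E$ (the paper constructs this flop via a formal automorphism of the contracted $A_1$-singularity together with Artin's dilatation theorem, which is exactly the ``contract and re-resolve'' you describe), and then check $\cal{L}$-terminality by noting $\cal{L}_k.E=0$. The one place you are too quick is the claim that the graph decomposition \emph{directly} gives $s_f=s_E$: the decomposition $\Gamma_{f,k}=\Gamma_{f_k}+b(E\times f_k(E))$ only yields $s_f(D)=D+b(D.E)E$ for some unknown $b\geq 0$, and the paper then uses that $s_f$ is an isometry (Proposition \ref{properties of s_f}(1)) to force $b\in\{0,1\}$, followed by a separate argument that the flop is not an isomorphism to exclude $b=0$.
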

\begin{proof}
    This proposition generalizes \cite[Proposition 7.5]{Chiarellotto_2019_neron_ogg_shafarevich_criterion_k3surface} to Enriques surfaces, essentially with the same proof. The only difference is that we need to replace $\Pic(X^\circ)$ by $\Pic(X^\circ)_{\bb{Q}} $. We give a sketch of the proof.
    
    By Proposition \ref{properties of s_f}(2), it suffices to prove this for $w=s_E$, for an irreducible component $E\subset E_{X,\cal{L}}$. Since $E$ is a $(-2)$-curve, there exists a blow down $\cal{X}\to\cal{X}'$ of $E$, which contracts $E$ to $y\in \cal{X}
    $ by \cite[Theorem 3.9]{algebraic_surface}. Let $\widehat{\cal{X}_y'}=\Spf(R)$ be the formal completion of $\cal{X}'$ at $y$ and let $\widehat{\cal{X}}\to\widehat{\cal{X}_y'}$ be the formal fibre. Following the proof of \cite[Proposition 4.2]{liedtke_2017_good_reduction_k3surfaces}, we find an automorphism $t:\widehat{\cal{X}'_y}\to \widehat{\cal{X}'_y}$ which induces $-1$ on the class group $\Cl(R)$. By \cite[Theorem 3.2]{artin_dilatation}, there exists a dilatation of algebraic spaces $f^+:\cal{X}^+\to\cal{X}'$, such that the formal completion of $f^+$ along its exceptional locus coincides with $\widehat{\cal{X}}\to\widehat{\cal{X}_y'}\xrightarrow{t}\widehat{\cal{X}_y'}$. This defines a birational map $$f:\cal{X}\to\cal{X}'\overset{t^{-1}}{\dashrightarrow}\cal{X}^+.$$ We claim that $s_f=s_E$.

    Let $\Gamma_{f,k}$ be the special fibre of the graph of $f$ and let $\Gamma_{f_k}$ be the graph of $f_k$. Since $f$ is an isomorphism away from $E$, we see that $$\Gamma_{f,k}=\Gamma_{f_k}+b(E\times f_k(E))$$ for some integer $b\geq0$. Then $$s_f(D)=D+b(E.D)E.$$ Also, by Proposition \ref{properties of s_f}(1), $s_f$ preserves the intersection pairing, thus
    \begin{align*}
        & E^2 = s_f(E)^2 \\
        \implies & E^2 = E^2 + 2b(E^2)^2 + b^2(E^2)^3 \\
        \implies & 2b(E^2)^2 + b^2(E^2)^3 = 0.
    \end{align*}
    Setting $E^2=-2$ yields $b=0$ or $1$. If $b=0$, $f$ extends to an isomorphism. If $b=1$, $s_f=s_E$. Then we prove $s_f=s_E$ by arguing that $f$ does not extend to an isomorphism as in \cite[Proposition 7.5]{Chiarellotto_2019_neron_ogg_shafarevich_criterion_k3surface}.

    Lastly, we need to prove that $f$ is $\cal{L}$-terminal. Since $\cal{X}$ is $\cal{L}$-terminal, $\cal{L}_k.E=0$. We may choose a divisor corresponding to $\cal{L}$ that does not intersect $E$. Since $f$ is an isomorphism away from $E$, it follows that $\cal{L}^+_k$ is also big and nef.
\end{proof}

Conversely, we have the following.

\begin{prop}\cite[c.f.][Theorem 7.6]{Chiarellotto_2019_neron_ogg_shafarevich_criterion_k3surface}\label{birational to weyl group}
Let $f:\cal{X}\dashrightarrow\cal{X}^+$ be an $\cal{L}$-terminal birational map. Then
\begin{enumerate}
    \item $s_f\in\cal{W}_{X,\cal{L}}$.
    \item $f$ is an isomorphism if and only if $s_f=\mathrm{id}$.
\end{enumerate}
\end{prop}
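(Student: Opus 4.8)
The plan is to adapt \cite[Theorem 7.6]{Chiarellotto_2019_neron_ogg_shafarevich_criterion_k3surface}, working throughout with $\Pic(X^\circ)_{\bb{Q}}$ in place of $\Pic(X^\circ)$. The first point is that $\cal{L}$-terminality pins down where $f$ is bad. Since $\cal{X}$ is $\cal{L}$-terminal and $\cal{X}^+$ is $\cal{L}^+$-terminal, Theorem \ref{RDP models} gives contractions $\cal{X}\to P(X,\cal{L})$ and $\cal{X}^+\to P(X,\cal{L}^+)$, and the generic-fibre isomorphism $f_K$ — which identifies $\cal{L}$ with $\cal{L}^+$ — induces an isomorphism $P(X,\cal{L})\cong P(X,\cal{L}^+)$. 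By the uniqueness in the lemma opening Section \ref{specialization homomorphism subsection}, $f$ is the birational map induced by $\cal{X}\to P(X,\cal{L})\cong P(X,\cal{L}^+)\leftarrow\cal{X}^+$; hence $f$ is an isomorphism over $\cal{X}\setminus E_{X,\cal{L}}$, and $f^{-1}$ over $\cal{X}^+\setminus E_{X^+,\cal{L}^+}$. Moreover $f_k\colon X^\circ\to\cal{X}^+_k$ is the canonical isomorphism of minimal resolutions of $P(X,\cal{L})_k$, so it carries $E_{X,\cal{L}}$ onto $E_{X^+,\cal{L}^+}$; in particular conjugation by $f_k^*$ preserves $\cal{W}_{X,\cal{L}}$. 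Finally, replacing $\cal{X}^+$ by the same algebraic space with its generic identification twisted by $f_K$ — which alters neither $\Gamma_{f,k}$, nor $f_k$, nor $s_f$, nor $\cal{L}$-terminality of $f$, since all are computed from the $\cal{O}_K$-schemes and their special fibres — we may assume $f$ is vertical, i.e.\ $f_K=\mathrm{id}_X$.

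For (1): $f$ is now a vertical $\cal{L}$-terminal birational map between two $\cal{L}$-terminal models of $(X,\cal{L})$. I would invoke the structure theory of $\cal{L}$-terminal models from \cite[Section 4]{liedtke_2017_good_reduction_k3surfaces} to factor $f$, up to composing with an isomorphism (which contributes the identity to $s_f$), as a composition $f_r\circ\cdots\circ f_1$ of flops in single $(-2)$-curves of the successive exceptional configurations. Each $f_i$ is of the type constructed in the proof of Proposition \ref{surjectivity}, so the computation there gives $s_{f_i}=s_{E_i}$; applying Proposition \ref{properties of s_f}(2) repeatedly then expresses $s_f$ as a product of $f_k$-conjugates (via the intermediate $f_{i,k}$) of the reflections $s_{E_i}$, each of which lies in $\cal{W}_{X,\cal{L}}$ because, as in the first paragraph, these isomorphisms carry exceptional configurations to exceptional configurations. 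Hence $s_f\in\cal{W}_{X,\cal{L}}$.

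For (2): if $f$ is an isomorphism then $\Gamma_{f,k}=\Gamma_{f_k}$, so $\tilde{s}_f=f_k^*$ and $s_f=\mathrm{id}$. Conversely, $\Gamma_{f,k}\supseteq\Gamma_{f_k}$ always, and by the first paragraph every further component projects into $E_{X,\cal{L}}$ under $p_1$ and into $E_{X^+,\cal{L}^+}$ under $p_2$, so for dimension reasons $\Gamma_{f,k}=\Gamma_{f_k}+\sum_{i,j}a_{ij}\,(C_i\times f_k(C_j))$ with $a_{ij}\ge 0$ and the $C_i$ the components of $E_{X,\cal{L}}$. The computation of Proposition \ref{surjectivity} then gives $s_f(D)=D+\sum_i\big(\sum_j a_{ij}(C_j.D)\big)C_i$. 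If $s_f=\mathrm{id}$, then since the $C_i$ are linearly independent in $\Pic(X^\circ)_{\bb{Q}}$ we obtain $\sum_j a_{ij}(C_j.D)=0$ for every $i$ and every $D$; taking $D$ an ample class and using $C_j.D>0$ together with $a_{ij}\ge 0$ forces all $a_{ij}=0$, so $\Gamma_{f,k}=\Gamma_{f_k}$ and $f$ is an isomorphism.

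The main obstacle is the factorization invoked in (1): showing that an arbitrary vertical $\cal{L}$-terminal birational map decomposes, up to isomorphism, into elementary flops — equivalently, that the groupoid of $\cal{L}$-terminal birational maps is generated by these flops. This is where the global geometry of the models genuinely enters; once it is granted, the remaining arguments are routine manipulations with $\Gamma_{f,k}$, the intersection pairing, and Proposition \ref{properties of s_f}.
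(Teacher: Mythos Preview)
Your argument for (2) is correct, and the ample-class trick is a clean variant of the paper's negative-definiteness argument. The gap is in (1), and it is precisely the one you flag: the factorization of an arbitrary vertical $\cal{L}$-terminal birational map into elementary flops is \emph{not} in \cite[Section 4]{liedtke_2017_good_reduction_k3surfaces}. That section constructs individual flops (Proposition 4.2) and produces one $\cal{L}$-terminal model (Proposition 4.5), but contains no statement that birational maps between $\cal{L}$-terminal models are generated by such flops. Establishing this would amount to a flop-decomposition theorem for smooth proper algebraic spaces over $\cal{O}_K$ with trivial relative canonical class, which is of the same order of difficulty as the proposition itself; invoking it here is not far from circular.

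The paper sidesteps factorization entirely with a lattice-theoretic argument. From the formula $s_f(D)=D+\sum_{i,j}b_{ij}(D.E_j)E_i$ (which you also derive), one sees that $s_f$ preserves $\Lambda_{X,\cal{L}}$ and each of its connected-component ADE root sublattices $\Lambda_l$, and by Proposition \ref{properties of s_f}(1) it lies in $\prod_l O(\Lambda_l)$. The structure theorem \cite{automorphism_of_lattices} gives $O(\Lambda_l)=\cal{W}(\Lambda_l)\rtimes\Aut(\text{Dynkin diagram of }\Lambda_l)$; Proposition \ref{surjectivity} then realizes the Weyl part by an actual $\cal{L}$-terminal map, so after composing one may assume $s_f|_{\Lambda_{X,\cal{L}}}$ is a pure product of diagram automorphisms. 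Finally \cite[Lemma 3.5]{Chiarellotto_2019_neron_ogg_shafarevich_criterion_k3surface} rules these out (this is where the positivity $b_{ij}\ge 0$ enters), giving $s_f|_{\Lambda_{X,\cal{L}}}=\mathrm{id}$; nondegeneracy of the intersection matrix $A$ then forces $B=0$ from $BA=0$, so $s_f=\mathrm{id}$ and the original $s_f$ lies in $\cal{W}_{X,\cal{L}}$. The key input you are missing is thus not geometric but combinatorial: the decomposition of $O(\Lambda_l)$ together with the exclusion of diagram automorphisms.
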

\begin{proof}
    Again, this proposition generalizes \cite[Theorem 7.6]{Chiarellotto_2019_neron_ogg_shafarevich_criterion_k3surface} to Enriques surfaces with the same proof. We give a sketch of the proof.

    For a finite and unramified Galois extension $L/K$, we have $\cal{W}_{X,\cal{L}}=\cal{W}_{X_L,\cal{L}_L}^{G_{L/K}}$ by \cite[Corollary 3.3]{Chiarellotto_2019_neron_ogg_shafarevich_criterion_k3surface}. Also, whether $f$ extends to an isomorphism can be detected over $L$. Therefore, we extend $K$ so that the irreducible components of $E_i\subset E_{X,\cal{L}}$ are geometrically irreducible and the singularities of $P(X,\cal{L})_k$ are of type ADE.

    Let $X^+$ be the generic fibre of $\cal{X}^+$. Then $f$ induces an isomorphism $f_K:X\to X^+$, therefore an isomorphism $P(X,\cal{L})\to P(X^+,\cal{L}^+)$. Next, $X^\circ$ and $\cal{X}^+_k$ are respectively the minimal desingularizations of $P(X,\cal{L})_k$ and $P(X^+,\cal{L}^+)_k$ with exceptional divisors $E_{X,\cal{L}}$ and $E_{X^+,\cal{L}^+}$. Thus, $f$ induces an isomorphism $\cal{X}\setminus E_{X,\cal{L}}\cong\cal{X}^+\setminus E_{X^+,\cal{L}^+}$. Let $p_1$ and $p_2$ respectively be the projections from $X^\circ\times X^\circ$ to the two factors. Then we have $s_f:D\mapsto p_{1*}(\Gamma\cap p_2^*D)$ with $$\Gamma=\Delta_{X^\circ}+\sum_{i,j}b_{ij}(E_i\times E_j)$$ for integers $b_{ij}\geq 0$. Therefore,
    
    $$s_f(D)=D+\sum_{i,j}b_{ij}(D.E_j)E_i.$$
    
    In particular, $s_f\in\Aut(\Pic(X^\circ)_{\bb{Q}})$ preserves the integral lattice $\Lambda_{X,\cal{L}}\subset\Pic(X^\circ)_{\bb{Q}}$. Also, the isomorphism $X^\circ\to\cal{X}_k^+$ maps the connected components of $E_{X,\cal{L}}$ to those of $E_{X^+,\cal{L}^+}$, so $b_{ij}=0$ if $E_i$ and $E_j$ are in different connected components. Then $s_f|_{\Lambda_{X,\cal{L}}}$ preserves every sub-lattice $\Lambda_l\subset E_{X,\cal{L}}$ corresponding to each connected component of $E_{X,\cal{L}}$. By Proposition \ref{properties of s_f}(3), the same thing holds for $s_{f^{-1}}$.

    Moreover, by Proposition \ref{properties of s_f}(1), $s_f$ preserves intersection pairing, so we write $$s_f|_{\Lambda_{X,\cal{L}}}=\prod_{l}\tilde{w}_l$$ for $\tilde{w}_l\in O(\Lambda_l)$ in the orthogonal group of every sub-lattice $\Lambda_l$. By \cite[Theorem 1]{automorphism_of_lattices}, for every $l$, $\tilde{w}_l=w_l\alpha_l$ where $w_l$ is in the Weyl group of $\Lambda_l$ and $\alpha_l$ induced by an automorphism of the Dynkin diagram of $\Lambda_l$. By Proposition \ref{surjectivity}, we may absorb $\prod_lw_l$ into $s_f$ and assume $$s_f|_{\Lambda_{X,\cal{L}}}=\prod_l\alpha_l.$$
    To prove (1), it suffices to prove that $s_f=\mathrm{id}$.
    Let $A$ be the intersection matrix $A$ of $E_{X,\cal{L}}$ and let $B=\{b_{ij}\}$. By \cite[Lemma 3.5]{Chiarellotto_2019_neron_ogg_shafarevich_criterion_k3surface}, $s_f|_{\Lambda_{X,\cal{L}}}=\mathrm{id}$. Therefore, $BA=0$. Since $A$ is negative definite, $B=0$ and $s_f=\mathrm{id}.$

    The proof of (2) is similar. If $f$ is an isomorphism, $s_f=\mathrm{id}$. Conversely, $s_f=\mathrm{id}$ implies that $BA=0$. Since $A$ is negative definite, $A=0$ and $f$ is an isomorphism.
\end{proof}

Given Proposition \ref{birational to weyl group}, we can remove the assumption in Proposition \ref{surjectivity} that all irreducible components of $E_{X,\cal{L}}$ are geometrically irreducible.

\begin{cor}\label{surjectivity improved}
    Let $\cal{X}$ be an $\cal{L}$-terminal model of $X$. For any $w\in\cal{W}_{X,\cal{L}}$, there exists an $\cal{L}$-terminal birational map $f:\cal{X}\dashrightarrow\cal{X}^+$ such that $s_f=w$.
\end{cor}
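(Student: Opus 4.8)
The plan is to reduce to Proposition \ref{surjectivity} by base-changing to a finite unramified extension $L/K$ over which all irreducible components of $E_{X,\cal{L}}$ become geometrically irreducible, solve the problem there, and then descend. The key technical input is the identification $\cal{W}_{X,\cal{L}} = \cal{W}_{X_L,\cal{L}_L}^{G_{L/K}}$ coming from \cite[Corollary 3.3]{Chiarellotto_2019_neron_ogg_shafarevich_criterion_k3surface}, together with Proposition \ref{birational to weyl group}, which characterizes which Weyl-group elements arise from $\cal{L}$-terminal birational maps \emph{without} the geometric-irreducibility hypothesis.

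First I would fix $w \in \cal{W}_{X,\cal{L}}$ and choose a finite unramified Galois extension $L/K$ such that every irreducible component of $E_{X,\cal{L}}$ is geometrically irreducible over the residue field of $L$; such an $L$ exists since the components are defined over a finite separable extension of $k$ and $k$ is perfect. Write $\cal{X}_{\cal{O}_L}$ for the base change, which is still $\cal{L}_L$-terminal. Under the inclusion $\cal{W}_{X,\cal{L}} \hookrightarrow \cal{W}_{X_L,\cal{L}_L}$ of \cite[Corollary 3.3]{Chiarellotto_2019_neron_ogg_shafarevich_criterion_k3surface}, the element $w$ becomes an element of $\cal{W}_{X_L,\cal{L}_L}$, and now Proposition \ref{surjectivity} applies: there is an $\cal{L}_L$-terminal birational map $f_L : \cal{X}_{\cal{O}_L} \dashrightarrow \cal{Y}$ between smooth models of $X_L$ with $s_{f_L} = w$.

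The main obstacle is descent: I must produce a birational map over $\cal{O}_K$ whose base change recovers $f_L$ (or at least one with the same $s_f$). Here I would use that $w$, viewed in $\cal{W}_{X_L,\cal{L}_L}$, is $G_{L/K}$-invariant, so the construction of $\cal{Y}$ and $f_L$ can be carried out $G_{L/K}$-equivariantly — the blow-downs and dilatations in the proof of Proposition \ref{surjectivity} are canonical once the Weyl element is fixed, hence the $G_{L/K}$-action on $\cal{X}_{\cal{O}_L}$ extends to $\cal{Y}$ and commutes with $f_L$ up to the natural twisting. Taking the quotient by $G_{L/K}$ (using that $L/K$ is unramified, so quotients of smooth models are smooth models, as in \cite[Proposition 7.1]{Chiarellotto_2019_neron_ogg_shafarevich_criterion_k3surface}) yields a smooth model $\cal{X}^+$ of $X$ and an $\cal{L}$-terminal birational map $f : \cal{X} \dashrightarrow \cal{X}^+$ whose base change to $\cal{O}_L$ is $f_L$. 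Since $s_f$ is computed from the graph, which is compatible with flat base change, we get $s_f = s_{f_L} = w$. Finally, $\cal{L}$-terminality of $f$ follows because $\cal{L}^+_k$ big and nef can be checked after the faithfully flat base change to $\cal{O}_L$, where it holds by construction. Alternatively — and perhaps more cleanly — once Proposition \ref{birational to weyl group} is available one can bypass explicit descent of the map: it suffices to know \emph{some} $\cal{L}$-terminal $\cal{X}^+$ with $s_f = w$ exists, and the equivariant quotient construction above provides it; I would present whichever of these two packagings is shorter.
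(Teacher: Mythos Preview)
Your overall strategy---base change to a finite unramified Galois extension $L/K$ over which the components of $E_{X,\cal{L}}$ become geometrically irreducible, apply Proposition \ref{surjectivity} there, then descend using Galois invariance of $w$---is exactly the approach of \cite[Corollary 7.7]{Chiarellotto_2019_neron_ogg_shafarevich_criterion_k3surface}, which is what the paper invokes. However, your descent step has a real gap: the claim that ``the blow-downs and dilatations in the proof of Proposition \ref{surjectivity} are canonical once the Weyl element is fixed'' is not justified. That construction proceeds by choosing a reduced expression for $w$ as a product of simple reflections and flopping the corresponding $(-2)$-curves one at a time; different reduced expressions give different sequences of intermediate models, so there is no a priori $G_{L/K}$-equivariance of the output $\cal{Y}$.

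The role of Proposition \ref{birational to weyl group} that you are missing is part (2), not part (1): an $\cal{L}$-terminal birational map is an \emph{isomorphism} precisely when its $s$-invariant is trivial. Once you have $f_L:\cal{X}_{\cal{O}_L}\dashrightarrow\cal{Y}$ with $s_{f_L}=w$, then for each $\sigma\in G_{L/K}$ the conjugate $f_L^{\sigma}:\cal{X}_{\cal{O}_L}\dashrightarrow\cal{Y}^{\sigma}$ also has $s_{f_L^{\sigma}}=\sigma(w)=w$, because $w\in\cal{W}_{X_L,\cal{L}_L}^{G_{L/K}}$. Hence the composite $f_L^{\sigma}\circ f_L^{-1}:\cal{Y}\dashrightarrow\cal{Y}^{\sigma}$ is an $\cal{L}_L$-terminal birational map with trivial $s$-invariant, and Proposition \ref{birational to weyl group}(2) forces it to be an isomorphism. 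These isomorphisms are the descent datum on $\cal{Y}$ (the cocycle condition holds because each is the identity on the generic fibre $X_L$, hence unique), and \'etale descent for algebraic spaces along the unramified cover $\cal{O}_L/\cal{O}_K$ then produces $\cal{X}^+$ and $f$ over $\cal{O}_K$. So it is Proposition \ref{birational to weyl group}(2), not any canonicity of the flop construction, that makes the descent work.
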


\begin{proof}
    This is an application of Proposition \ref{surjectivity} and Proposition \ref{birational to weyl group}. The proof is the same as \cite[Corollary 7.7]{Chiarellotto_2019_neron_ogg_shafarevich_criterion_k3surface}.
\end{proof}

\subsection{Extendability of group actions}

The constructions in this sub-section are based on \cite[Section 7.2]{Chiarellotto_2019_neron_ogg_shafarevich_criterion_k3surface}. We want to apply the results from the previous sub-section to deduce a criterion for the extendability of group actions. We begin with some terminology.

Let $G$ be a group acting on the right on a proper variety $X$ (finite type, geometrically integral, and separated) over a field $F$.

For every $g\in G$, $g$ induces an automorphism of $F$ by pulling back the global sections. This defines a homomorphism $\sigma:G\to \Aut(F)$. Denote by $\sigma_g$ the automorphism induced by $g$ on $\Spec F$.

\begin{defn}\label{base field}
The base field of the $G$-action is the fixed field of $\sigma(G)\subset\Aut(F)$.
\end{defn}

Let $g\in G$. Then $g$ is an automorphism of $X$ semi-linear with respect to $\sigma_g$. Equivalently, denote by $X^{\sigma_g}$ the base change of $X$ by $\sigma_g$. Then the induced isomorphism $g^{\#}:X\to X^{\sigma_g}$ is $F$-linear. In short, we have the following commutative diagram, where the square is Cartesian. $$\xymatrix{X \ar@{->}[r]^{g^{\#}} \ar@{->}[dr]_g & X^{\sigma_g} \ar@{->}[r] \ar@{->}[d] & \Spec F\ar@{->}[d]^{\sigma_g} \\  & X \ar@{->}[r] & \Spec F} $$

Now we return to the case where $X$ is a K3 or Enriques surface over $K$ with good reduction. Let $\cal{X}$ be a smooth model of $X$. Let $G$ be a finite group acting on $X$ on the right with base field $K_0$.

\begin{assump}
    We assume that $K/K_0$ is finite and unramified.
\end{assump}

Let $g\in G$. As above, we have a factorization $g:X\xrightarrow{g^{\#}}X^{\sigma_g}\to X$. The second isomorphism is induced by $\sigma_g$. This obviously extends to an isomorphism $\cal{X}^{\sigma_g}\to\cal{X}$, where we abuse notation and write $\sigma_g$ also for the induced automorphism of $\Spec\cal{O}_K$. Therefore, the birational map $g_{\cal{X}}:\cal{X}\dashrightarrow\cal{X}$ induced by $g$ extends to an automorphism if and only if the birational map $g^{\#}_{\cal{X}}:\cal{X}\dashrightarrow\cal{X}^{\sigma_g}$ induced by $g^\#$ extends to an isomorphism.

To apply results from Section \ref{s_f construction subsection}, we want to find some polarization $\cal{L}$ of $X$ and an $\cal{L}$-terminal model $\cal{X}$, such that $s_{g^{\#}_\cal{X}}\in \cal{W}_{X,\cal{L}}$ for every $g\in G$.

\begin{lem}
    Suppose $\cal{L}$ is a polarization of $X$ fixed by the $G$-action. Let $\cal{X}$ be an $\cal{L}$-terminal model. Then for every $g\in G$, $s_{g^{\#}_\cal{X}}\in \cal{W}_{X,\cal{L}}$.
\end{lem}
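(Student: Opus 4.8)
The plan is to reduce the statement to Proposition~\ref{birational to weyl group}(1) by verifying that the birational map $g^{\#}_{\cal{X}}:\cal{X}\dashrightarrow\cal{X}^{\sigma_g}$ is $\cal{L}$-terminal in the sense of Section~\ref{s_f construction subsection}. We are given that $\cal{X}$ is $\cal{L}$-terminal, so by the very definition of $\cal{L}$-terminality of a birational map, it suffices to show that $\cal{X}^{\sigma_g}$ is $\cal{L}'$-terminal, where $\cal{L}'$ denotes the pushforward of $\cal{L}$ along $(g^{\#})_K:X\to X^{\sigma_g}$, i.e. that the specialization $\cal{L}'_k$ on $(\cal{X}^{\sigma_g})_k$ is big and nef.

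First I would identify $\cal{X}^{\sigma_g}$ with base change of $\cal{X}$ along the automorphism $\sigma_g$ of $\Spec\cal{O}_K$ (using that $K/K_0$ is unramified, so $\sigma_g$ extends to $\Spec\cal{O}_K$ and $\cal{X}^{\sigma_g}$ is again a smooth model of $X^{\sigma_g}$, with special fibre $(\cal{X}_k)^{\sigma_{g,k}}$ the base change along the induced automorphism of $\Spec k$). Under this identification, the polarization $\cal{L}$ on $\cal{X}$ pulls back to a polarization on $\cal{X}^{\sigma_g}$ whose generic fibre is $\cal{L}^{\sigma_g}$ on $X^{\sigma_g}$; and since base change along a field automorphism preserves bigness and nefness of divisors, this pulled-back polarization is again terminal. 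Next I would use the hypothesis that $\cal{L}$ is fixed by the $G$-action: this means precisely that $(g^{\#})_K^*\cal{L}^{\sigma_g}\cong\cal{L}$ on $X$, equivalently $\cal{L}' = (g^{\#})_{K,*}\cal{L}\cong\cal{L}^{\sigma_g}$. Hence $\cal{L}'$ is the generic fibre of the terminal polarization on $\cal{X}^{\sigma_g}$ obtained by base change, so $\cal{X}^{\sigma_g}$ is $\cal{L}'$-terminal, i.e. $g^{\#}_{\cal{X}}$ is an $\cal{L}$-terminal birational map. Applying Proposition~\ref{birational to weyl group}(1) gives $s_{g^{\#}_{\cal{X}}}\in\cal{W}_{X,\cal{L}}$.

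The main subtlety — and the step I would be most careful about — is the bookkeeping around the semilinear twist: one must check that the RDP model $P(X^{\sigma_g},\cal{L}^{\sigma_g})$ and the exceptional locus $E_{X^{\sigma_g},\cal{L}^{\sigma_g}}$ are genuinely the $\sigma_{g,k}$-base changes of $P(X,\cal{L})$ and $E_{X,\cal{L}}$ (so that the notion of terminality transports correctly), which follows from the uniqueness in Theorem~\ref{RDP models}(1) applied over the twisted base, together with the fact that base change along a field automorphism commutes with taking minimal desingularizations and preserves the property of having at worst rational double points. Granting that, everything else is formal. Note that no assumption of geometric irreducibility of the components of $E_{X,\cal{L}}$ is needed here, since Proposition~\ref{birational to weyl group} already covers the general case.
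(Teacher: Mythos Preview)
Your proposal is correct and follows essentially the same route as the paper: both verify that $g^{\#}_{\cal{X}}:\cal{X}\dashrightarrow\cal{X}^{\sigma_g}$ is $\cal{L}$-terminal and then invoke Proposition~\ref{birational to weyl group}(1). The paper presents this via the factorization $g^{\#}=\sigma_g^{-1}\circ g$ (the base-change map is $\cal{L}$-terminal for free, and $g$ is $\cal{L}$-terminal because $g^*\cal{L}\cong\cal{L}$), while you compute the pushforward $\cal{L}'\cong\cal{L}^{\sigma_g}$ directly; the content is identical, and your extra caution about RDP models transporting under $\sigma_g$ is not actually needed, since $\cal{L}$-terminality is defined solely by $\cal{L}_k$ being big and nef.
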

\begin{proof}
    Note that $g^{\#}:\cal{X}\dashrightarrow \cal{X}^{\sigma_g}$ factors as $\cal{X}\overset{g}{\dashrightarrow}\cal{X}\to \cal{X}^{\sigma_g}$, where the second map is induced by $\sigma_g^{-1}$. The second map is just base change, so it is $\cal{L}$-terminal for any terminal pair $(\cal{X},\cal{L})$. Also, by assumption, $\cal{L}$ is fixed by the $G$-action, so the first map is $\cal{L}$-terminal. This proves that $g_{\cal{X}}^{\#}$ is $\cal{L}$-terminal. Then we are done by Proposition \ref{birational to weyl group}.
\end{proof}

\begin{rem}
    Since $G$ is finite, the $G$-action fixes some polarization. Indeed, given any polarization $\mathcal{L}$, the tensor product $\bigotimes_{g \in G} g^*\mathcal{L}$ is $G$-invariant and hence defines a polarization fixed by $G$.
\end{rem}

Recall that, via the specialization homomorphism, $G$ also acts on $X^\circ$ on the right. Therefore, $G$ acts on $\Pic(X^\circ)$ on the left via pullback, and on $\Aut(\Pic(X^\circ))$ on the left via conjugation.

Equivalently, the $G$-action on $\Aut(\Pic(X^\circ))$ can be described as follows. We abuse notation and write $\sigma_g$ both for the automorphisms of $\Spec K$ and $\Spec k$ induced respectively by $g\in \Aut(X)$ and $g_k\in \Aut(X^\circ)$. Given $s\in \Aut(\Pic(X^\circ))_{\bb{Q}}$, let $s^{\sigma_g}\in \Aut(\Pic((X^\circ)^{\sigma_g})_{\bb{Q}})$ be the base change of $s$ by $\sigma_g\in\Aut(\Spec k)$, i.e. $s^{\sigma_g}$ is the unique automorphism of $\Pic((X^\circ)^{\sigma_g})_{\bb{Q}}$ that makes the following diagram commute.
$$\xymatrix{\Pic(X^\circ)_{\bb{Q}} \ar@{->}[r]^{s} \ar@{->}[d]^{\sigma_g^*} & \Pic(X^\circ)_{\bb{Q}} \ar@{->}[d]^{\sigma_g^*} \\ \Pic((X^\circ)^{\sigma_g})_{\bb{Q}} \ar@{->}[r]^{s^{\sigma_g}} & \Pic((X^\circ)^{\sigma_g})_{\bb{Q}}}$$
We claim that $g$ acts on $\Aut(\Pic(X^\circ)_{\bb{Q}})$ equivalently via $$s\mapsto (g_k^{\#})^*\circ s^{\sigma_g}\circ ((g_k^{\#})^*)^{-1}.$$
To see this, we need to prove
\begin{equation}\label{picard group action}
    g_k^*\circ s\circ (g_k^*)^{-1}=(g_k^{\#})^*\circ s^{\sigma_g}\circ ((g_k^{\#})^*)^{-1}.
\end{equation}
Since $g_k=\sigma_g\circ g^\#$, we have $g_k^*=(g^\#)^*\circ\sigma_g^*$.
Then the LHS and RHS of \ref{picard group action} fit in the following diagram. $$\xymatrix{\Pic(X^\circ)_{\bb{Q}} \ar@{->}[r]^{((g^{\#})^*)^{-1}} & \Pic((X^\circ)^{\sigma_g})_{\bb{Q}} \ar@{->}[r]^{(\sigma_g^*)^{-1}} \ar@{->}[d]_{s^{\sigma_g}} & \Pic(X^\circ)_{\bb{Q}} \ar@{->}[d]_{s} \\ \Pic(X^\circ)_{\bb{Q}} & \Pic((X^\circ)^{\sigma_g})_{\bb{Q}} \ar@{->}[l]_{(g^{\#})^*} & \Pic(X^\circ)_{\bb{Q}} \ar@{->}[l]_{\sigma_g^*}}$$
To prove \ref{picard group action}, it amounts to showing the square in the diagram commutes, which is immediate by the construction of $s^{\sigma_g}$.

Let $\cal{L}$ be a polarization of $X$ fixed by the $G$-action and let $g\in G$. Denote by $\cal{L}^{\sigma_g}$ the base change of $\cal{L}$ by $\sigma_g$. Then $g^{\#}$ induces an isomorphism $P(X,\cal{L})\to P(X^{\sigma_g},\cal{L}^{\sigma_g})$, therefore an isomorphism of their desingularizations $X^\circ\to (X^\circ)^{\sigma_g}$. In particular, $g^{\#}$ maps the exceptional divisors of $P(X,\cal{L})$ to those of $P(X^{\sigma_g},\cal{L}^{\sigma_g})$. This implies that the $G$-action on $\Aut(\Pic(X^\circ)_{\bb{Q}})$ restricts to a $G$-action on $\cal{W}_{X,\cal{L}}$.

\begin{lem}\label{ugly cocycle lemma}
    For any $\cal{L}$-terminal model, the function $$\alpha_{\cal{L},\cal{X}}:G\to \cal{W}_{X,\cal{L}}\ \ \ g\mapsto s_{g^{\#}_\cal{X}}$$
    defines a $1$-cocycle of $G$ with values in $\cal{W}_{X,\cal{L}}$.
\end{lem}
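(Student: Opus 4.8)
The plan is to verify directly the $1$-cocycle identity
$$\alpha_{\cal{L},\cal{X}}(gh)=\alpha_{\cal{L},\cal{X}}(g)\cdot{}^{g}\!\big(\alpha_{\cal{L},\cal{X}}(h)\big),$$
where ${}^{g}(-)$ is the left $G$-action on $\cal{W}_{X,\cal{L}}$ recalled above, i.e. $s\mapsto (g_k^{\#})^{*}\circ s^{\sigma_g}\circ ((g_k^{\#})^{*})^{-1}$. That $\alpha_{\cal{L},\cal{X}}$ takes values in $\cal{W}_{X,\cal{L}}$ is already the content of the preceding lemma (using that $\cal{L}$ is $G$-invariant), so only the identity remains; and since the $G$-action on $\Aut(\Pic(X^\circ)_{\bb{Q}})$ restricts to the one on $\cal{W}_{X,\cal{L}}$, it is harmless to check the identity in $\Aut(\Pic(X^\circ)_{\bb{Q}})$.

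The first step is a factorization of birational maps. Since $G$ acts on the right, $(gh)_{\cal{X}}=h_{\cal{X}}\circ g_{\cal{X}}$; combining this with $g_{\cal{X}}=\sigma_g\circ g^{\#}_{\cal{X}}$, $h_{\cal{X}}=\sigma_h\circ h^{\#}_{\cal{X}}$, $(gh)_{\cal{X}}=\sigma_{gh}\circ(gh)^{\#}_{\cal{X}}$ and the relation $\sigma_{gh}=\sigma_h\circ\sigma_g$ (the semilinear automorphism attached to $gh$ being the composite of those attached to $g$ and $h$), the two factors $\sigma_h$ cancel and one is left with
$$(gh)^{\#}_{\cal{X}}=(h^{\#}_{\cal{X}})^{\sigma_g}\circ g^{\#}_{\cal{X}}\colon\ \cal{X}\ \dashrightarrow\ \cal{X}^{\sigma_g}\ \dashrightarrow\ \cal{X}^{\sigma_{gh}},$$
where $(h^{\#}_{\cal{X}})^{\sigma_g}$ denotes the base change along $\sigma_g$ of the birational map $h^{\#}_{\cal{X}}\colon\cal{X}\dashrightarrow\cal{X}^{\sigma_h}$, and we use $(\cal{X}^{\sigma_h})^{\sigma_g}=\cal{X}^{\sigma_{gh}}$. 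It suffices to check this on generic fibres, where it follows from the universal property characterising $(-)^{\#}$ as the $K$-linearisation of a semilinear automorphism.

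The second step applies Proposition \ref{properties of s_f}(2) to this factorization, with the first map $g^{\#}_{\cal{X}}$ in the role of ``$f$'' and $(h^{\#}_{\cal{X}})^{\sigma_g}$ in the role of ``$g$'':
$$s_{(gh)^{\#}_{\cal{X}}}=s_{g^{\#}_{\cal{X}}}\cdot (g^{\#}_{\cal{X}})_k^{*}\cdot s_{(h^{\#}_{\cal{X}})^{\sigma_g}}\cdot \big((g^{\#}_{\cal{X}})_k^{*}\big)^{-1}.$$
I then invoke two base-change compatibilities along $\sigma_g$: namely $(g^{\#}_{\cal{X}})_k=g_k^{\#}$ (forming the special fibre commutes with the $(-)^{\#}$ construction, which follows from $g^{\#}_{\cal{X}}=\sigma_g^{-1}\circ g_{\cal{X}}$ and the definition of the specialization homomorphism), and $s_{(h^{\#}_{\cal{X}})^{\sigma_g}}=(s_{h^{\#}_{\cal{X}}})^{\sigma_g}$ (the construction of $s_{(-)}$ from graphs, proper pushforward, intersection with pullbacks, and the canonical isomorphism on special fibres is compatible with base change along an automorphism of $\Spec\cal{O}_K$). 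Substituting these, the right-hand side becomes $s_{g^{\#}_{\cal{X}}}\cdot (g_k^{\#})^{*}\cdot (s_{h^{\#}_{\cal{X}}})^{\sigma_g}\cdot ((g_k^{\#})^{*})^{-1}$, which is exactly $\alpha_{\cal{L},\cal{X}}(g)\cdot{}^{g}\!\big(\alpha_{\cal{L},\cal{X}}(h)\big)$ by the description of the $G$-action recalled above (cf.\ \ref{picard group action}). This is the desired cocycle identity.

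I expect the only real work to be bookkeeping: the right-action convention for $G$, the convention for composing birational maps, and the order appearing in Proposition \ref{properties of s_f}(2) all have to be kept consistent so that the $\sigma$'s telescope to leave precisely the base-changed map $(h^{\#}_{\cal{X}})^{\sigma_g}$; and one must confirm that the construction of $s_f$ is compatible with base change along $\sigma_g$. Neither step needs anything beyond the constructions already set up, so the argument should be brief.
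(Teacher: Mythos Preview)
Your proposal is correct and follows essentially the same route as the paper: factor $(gh)^{\#}_{\cal{X}}=(h^{\#}_{\cal{X}})^{\sigma_g}\circ g^{\#}_{\cal{X}}$, apply Proposition \ref{properties of s_f}(2), and use the base-change compatibility $s_{(h^{\#}_{\cal{X}})^{\sigma_g}}=(s_{h^{\#}_{\cal{X}}})^{\sigma_g}$ together with $(g^{\#}_{\cal{X}})_k=g_k^{\#}$ to recognise the result as $\alpha_{\cal{L},\cal{X}}(g)\cdot g(\alpha_{\cal{L},\cal{X}}(h))$. The only differences are expository: you make the right-action bookkeeping and the compatibility $(g^{\#}_{\cal{X}})_k=g_k^{\#}$ explicit, whereas the paper leaves these implicit.
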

\begin{proof}
    Let $g,h\in G$. We abuse notation and write $\sigma_g$ also for the induced base change isomorphism $X^{\sigma_g}\to X$. Let $(h^{\#})^{\sigma_g}=\sigma_g^{-1}\circ h^\#\circ\sigma_g$ be the base change of $h^{\#}$ by $\sigma_g$, i.e. the following diagram commutes.
    $$\xymatrix{X^{\sigma_g} \ar@{->}[r]^{(h^{\#})^{\sigma_g}} \ar@{->}[d]^{\sigma_g} & {(X^{\sigma_h})^{\sigma_g}} \ar@{->}[d]^{\sigma_g} \\ X \ar@{->}[r]^{h^\#} & X^{\sigma_h}}$$
    We use the subscript $\cal{X}$ to denote the birational map induced on $\cal{X}$. The construction of $s_{h^\#_\cal{X}}$ is compatible with base change, so we have $s_{(h^{\#}_{\cal{X}})^{\sigma_g}}=s_{(h^{\#}_{\cal{X}})}^{\sigma_g}$.
    
    Also, by construction, we have $\sigma_{h\circ g}=\sigma_h\circ\sigma_g$. Therefore,
    \begin{align*}
        (h^{\#})^{\sigma_g}\circ g^{\#} & = \sigma_g^{-1}\circ h^\#\circ\sigma_g \circ g^\# \\ & = \sigma_g^{-1}\circ(\sigma_h^{-1}\circ h)\circ\sigma_g\circ(\sigma_g^{-1}\circ g) \\ & = \sigma_{h\circ g}^{-1}\circ(h\circ g) = (h\circ g)^\#.
    \end{align*}
    In particular, we have $(h^{\#}_\cal{X})^{\sigma_g}\circ g_{\cal{X}}^{\#}=(h\circ g)_{\cal{X}}^\#$.
    
    Then we compute
    \begin{align*}
        \alpha_{\cal{L},\cal{X}}(gh) & = s_{(hg)^{\#}_\cal{X}}=s_{(h^{\#}_{\cal{X}})^{\sigma_g}\circ g^{\#}_\cal{X}} \\ & = s_{g^{\#}_\cal{X}}\circ (g_k^{\#})^*\circ s_{(h^{\#}_{\cal{X}})^{\sigma_g}}\circ ((g_k^{\#})^*)^{-1} \\ & = s_{g^{\#}_\cal{X}}\circ (g_k^{\#})^*\circ s_{h^{\#}_{\cal{X}}}^{\sigma_g}\circ ((g_k^{\#})^*)^{-1} \\ & = \alpha_{\cal{L},\cal{X}}(g)\circ g(\alpha_{\cal{L},\cal{X}}(h)).
    \end{align*}
    This proves that $\alpha_{\cal{L},\cal{X}}$ is a 1-cocycle.
\end{proof}

\begin{cor}
    The $G$-action on $X$ extends to $\cal{X}$ if and only if the $1$-cocycle $\alpha_{\cal{L},\cal{X}}$ is trivial.
    \begin{proof}
        This follows by Lemma \ref{ugly cocycle lemma} and Proposition \ref{birational to weyl group}.
    \end{proof}
\end{cor}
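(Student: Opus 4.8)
The plan is to combine two things already established in the excerpt: the reduction, recorded just before Lemma~\ref{ugly cocycle lemma}, that extending the $G$-action to $\cal{X}$ amounts to extending each $g^{\#}_{\cal{X}}$ to an isomorphism; and the isomorphism criterion of Proposition~\ref{birational to weyl group}(2), which detects this through the invariant $s_{(-)}$. The only additional ingredient is the observation that element-wise extensions automatically assemble into a genuine $G$-action, so that it suffices to treat one $g\in G$ at a time.

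Concretely, I would proceed as follows. By definition the $G$-action on $X$ extends to $\cal{X}$ if and only if, for every $g\in G$, the induced birational self-map $g_{\cal{X}}:\cal{X}\dashrightarrow\cal{X}$ extends to an automorphism; and, as explained immediately before Lemma~\ref{ugly cocycle lemma}, this holds precisely when $g^{\#}_{\cal{X}}:\cal{X}\dashrightarrow\cal{X}^{\sigma_g}$ extends to an isomorphism. Since $\cal{X}$ is $\cal{L}$-terminal and $\cal{L}$ is $G$-invariant, the lemma asserting $s_{g^{\#}_{\cal{X}}}\in\cal{W}_{X,\cal{L}}$ --- or rather its proof --- shows that $g^{\#}_{\cal{X}}$ is an $\cal{L}$-terminal birational map. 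Proposition~\ref{birational to weyl group}(2) then gives that $g^{\#}_{\cal{X}}$ is an isomorphism if and only if $s_{g^{\#}_{\cal{X}}}=\mathrm{id}$, i.e. if and only if $\alpha_{\cal{L},\cal{X}}(g)=\mathrm{id}$. Quantifying over $g\in G$, the $G$-action extends to $\cal{X}$ if and only if $\alpha_{\cal{L},\cal{X}}(g)=\mathrm{id}$ for every $g$, that is, if and only if the cocycle $\alpha_{\cal{L},\cal{X}}$ is trivial.

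The one point I would make explicit is that if each $g_{\cal{X}}$ extends to an automorphism $\widetilde{g}$ of $\cal{X}$, then $g\mapsto\widetilde{g}$ is indeed a $G$-action lifting the one on $X$: for $g,h\in G$ the automorphisms $\widetilde{g}\,\widetilde{h}$ and $\widetilde{gh}$ of $\cal{X}$ restrict to the same map on the schematically dense generic fibre $X$, and $\cal{X}$ is separated over $\cal{O}_K$, so they coincide. Granting this, the corollary is a direct consequence, and I do not expect any genuine obstacle: the substantive work has been front-loaded into Proposition~\ref{birational to weyl group}. The only thing requiring a little care is making the $\cal{L}$-terminality of $g^{\#}_{\cal{X}}$ explicit, since that is exactly the hypothesis under which Proposition~\ref{birational to weyl group}(2) applies.
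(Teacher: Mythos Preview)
Your argument is correct and matches the paper's approach: the paper's one-line proof cites exactly Lemma~\ref{ugly cocycle lemma} (which defines $\alpha_{\cal{L},\cal{X}}$ and, via the preceding lemma's proof, gives the $\cal{L}$-terminality of $g^{\#}_{\cal{X}}$) and Proposition~\ref{birational to weyl group}(2), which is precisely the reduction you carry out. Your added remark that the element-wise extensions assemble into a genuine $G$-action by separatedness and density of the generic fibre is a worthwhile detail that the paper leaves implicit.
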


Another natural question is how $\alpha_{\cal{L},\cal{X}}$ changes as we choose different $\cal{X}$.
\begin{lem}
    Let $\cal{X}, \cal{X}^+$ be two $\cal{L}$-terminal models and $f:\cal{X}\dashrightarrow\cal{X}^+$ be the birational map induced by the identity map on $X$. Then for every $g\in G$, $$\alpha_{\cal{L},\cal{X}^+}(g)=s_f^{-1}\circ\alpha_{\cal{L},\cal{X}}(g)\circ g(s_f).$$ In particular, the cohomology class represented by $\alpha_{\cal{L},\cal{X}}$ is independent of the choice of $\cal{L}$-terminal model $\cal{X}$. We denote this class by $\alpha_{\cal{L}}$.
\end{lem}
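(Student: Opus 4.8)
The plan is to compute both $1$-cocycles directly in terms of the $s_{(-)}$ maps and exploit the cocycle-style identities in Proposition~\ref{properties of s_f}(2)--(3). Fix $g\in G$. The birational map $(g^\#)_{\cal{X}^+}\colon\cal{X}^+\dashrightarrow(\cal{X}^+)^{\sigma_g}$ on the model $\cal{X}^+$ should be compared to the corresponding map on $\cal{X}$ by inserting the birational map $f$ induced by $\mathrm{id}_X$ at the source and the map $f^{\sigma_g}$ (the base change of $f$ by $\sigma_g$) at the target. Concretely, since everything on the generic fibre is $\mathrm{id}_X$ resp.\ $g^\#$, the two composites $f^{\sigma_g}\circ (g^\#)_{\cal{X}^+}$ and $(g^\#)_{\cal{X}}\circ f$ are both the birational map $\cal{X}\dashrightarrow(\cal{X}^+)^{\sigma_g}$ induced by $g^\#$ on the generic fibre, hence equal. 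Then I would apply Proposition~\ref{properties of s_f}(2) to both sides of this equation to express $s_{(g^\#)_{\cal{X}^+}}$ in terms of $s_f$, $s_{(g^\#)_{\cal{X}}}$, and $s_{f^{\sigma_g}}$.

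The two ingredients that make this work are: first, $s_{f^{\sigma_g}}=(s_f)^{\sigma_g}$, which is the same "construction commutes with base change" fact already used in the proof of Lemma~\ref{ugly cocycle lemma}; and second, the description of the $G$-action on $\Aut(\Pic(X^\circ)_{\bb{Q}})$ from equation~\eqref{picard group action}, namely $g(s)=(g_k^\#)^*\circ s^{\sigma_g}\circ((g_k^\#)^*)^{-1}$, together with the chain-rule formulas from Proposition~\ref{properties of s_f}(2)--(3) relating $(f_k)^*$-conjugates. Unwinding: from $f^{\sigma_g}\circ(g^\#)_{\cal{X}^+}=(g^\#)_{\cal{X}}\circ f$ and Proposition~\ref{properties of s_f}(2),
\[
s_{(g^\#)_{\cal{X}}}\circ (g_k^\#)^*\circ s_{f^{\sigma_g}}\circ((g_k^\#)^*)^{-1}
= s_f\circ (f_k)^*\circ s_{(g^\#)_{\cal{X}^+}}\circ((f_k)^*)^{-1},
\]
where I have used that $(g^\#)_k$ resp.\ $f_k$ are the relevant induced isomorphisms on the special fibres. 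Substituting $s_{f^{\sigma_g}}=(s_f)^{\sigma_g}$, the left-hand inner conjugate becomes exactly $g(s_f)$ by \eqref{picard group action}. Solving for $s_{(g^\#)_{\cal{X}^+}}$ and recognizing $s_{(g^\#)_{\cal{X}^+}}=\alpha_{\cal{L},\cal{X}^+}(g)$, $s_{(g^\#)_{\cal{X}}}=\alpha_{\cal{L},\cal{X}}(g)$, $s_f$ invertible by Proposition~\ref{properties of s_f}(3), yields the claimed formula $\alpha_{\cal{L},\cal{X}^+}(g)=s_f^{-1}\circ\alpha_{\cal{L},\cal{X}}(g)\circ g(s_f)$ after absorbing the $(f_k)^*$-conjugation into the definition of the composition convention (here one must be slightly careful: the precise placement of $(f_k)^*$ versus $(g_k^\#)^*$ factors is exactly what the chain rules in Proposition~\ref{properties of s_f}(2)--(3) are for, and matching them against the $G$-action formula is the one place bookkeeping can go wrong).

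Once the displayed identity is established, the final sentence is immediate: it says precisely that the cocycles $\alpha_{\cal{L},\cal{X}}$ and $\alpha_{\cal{L},\cal{X}^+}$ differ by the coboundary of $s_f\in\cal{W}_{X,\cal{L}}$ (note $s_f\in\cal{W}_{X,\cal{L}}$ by Proposition~\ref{birational to weyl group}(1), since $f$ is $\cal{L}$-terminal as a map between two $\cal{L}$-terminal models), so they represent the same class in $H^1(G,\cal{W}_{X,\cal{L}})$; independence of $\cal{L}$ is handled identically, or one notes any two $G$-fixed polarizations can be compared through a common terminal model. I expect the main obstacle to be purely notational: keeping straight the three different "pullback" maps on Picard groups — $(f_k)^*$, $(g_k^\#)^*$, and $\sigma_g^*$ — and checking that the identity $f^{\sigma_g}\circ(g^\#)_{\cal{X}^+}=(g^\#)_{\cal{X}}\circ f$ of birational maps genuinely holds at the level of graphs (it does, because all four arrows restrict to the same $K$-linear map $g^\#\colon X\to X^{\sigma_g}$ on generic fibres, and a birational map between separated models is determined by its generic fibre). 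No genuinely new geometric input beyond Section~\ref{s_f construction subsection} is needed.
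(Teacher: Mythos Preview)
Your approach is essentially identical to the paper's: both use the commutative square of birational maps $f^{\sigma_g}\circ g^{\#}_{\cal{X}} = g^{\#}_{\cal{X}^+}\circ f$, apply Proposition~\ref{properties of s_f}(2) to each side, and invoke the base-change compatibility $s_{f^{\sigma_g}}=(s_f)^{\sigma_g}$ together with the description \eqref{picard group action} of the $G$-action. Note that in your prose the identity is written with the $\cal{X}$ and $\cal{X}^+$ subscripts swapped (so the composites do not type-check as maps $\cal{X}\dashrightarrow(\cal{X}^+)^{\sigma_g}$), but your subsequent displayed equation is the correct one and matches the paper's computation exactly, including the handling of the $(f_k)^*$-conjugation.
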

\begin{proof}
    View $\alpha_{\cal{L},\cal{X}^+}$ as a function
    $$\alpha_{\cal{L},\cal{X}^+}:G\to \Aut(\Pic(\cal{X}^+_k)),$$ i.e. we don't identify $X^\circ=\cal{X}_k$ with $\cal{X}^+_k$.
    Then we want to prove
    $$f_k^*\circ\alpha_{\cal{L},\cal{X}^+}(g)\circ (f^*_k)^{-1}=s_f^{-1}\circ\alpha_{\cal{L},\cal{X}}(g)\circ g(s_f).$$
    Denote by $g_{\cal{X}^+}$ the birational self-map on $\cal{X}^+$ induced by $g$ and by $f^{\sigma_g}$ the base change of $f$ by $\sigma_g$. Then the following diagram commutes.
    $$\xymatrix{ \cal{X} \ar@{-->}[r]^-{g^{\#}_\cal{X}} \ar@{-->}[d]_f & \cal{X}^{\sigma_g} \ar@{-->}[d]^{f^{\sigma_g}} \\ \cal{X}^+ \ar@{-->}[r]^-{g_{\cal{X}^+}^{\#}} & (\cal{X}^+)^{\sigma_g}}$$
    In particular, $s_{f^{\sigma_g}\circ g^{\#}_\cal{X}} = s_{g^{\#}_{\cal{X}^+}\circ f}$.
    We compute
    \begin{align*}
        \alpha_{\cal{L},\cal{X}}(g)\circ g(s_f) & = s_{g^{\#}_{\cal{X}}}\circ (g_k^{\#})^* \circ s_{f^{\sigma_g}} \circ ((g_k^{\#})^*)^{-1} \\ & = s_{f^{\sigma_g}\circ g^{\#}_\cal{X}} = s_{g^{\#}_{\cal{X}^+}\circ f} \\ & = s_f\circ f^*_{k}\circ \alpha_{\cal{L},\cal{X}^+}(g)\circ (f_k^*)^{-1}
    \end{align*}
    This finishes the proof.
\end{proof}

This gives a criterion for the extendability of group actions.

\begin{thm}\label{first criterion}
    Let $X$ be a K3 or Enriques surface over $K$ with good reduction. Let $G$ be a finite group acting on $X$ with base field $K_0$, such that $K/K_0$ is finite and unramified. Then the following are equivalent.
    \begin{enumerate}
        \item The action of $G$ is extendable.
        \item The cohomology class $\alpha_{\cal{L}}$ is trivial for some polarization $\cal{L}$ fixed by $G$.
        \item The cohomology class $\alpha_{\cal{L}}$ is trivial for every polarization $\cal{L}$ fixed by $G$.
    \end{enumerate}
\end{thm}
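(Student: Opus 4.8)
The plan is to string together the results already established in this subsection. The equivalence $(2)\iff(3)$ is the easy part: given two $G$-invariant polarizations $\cal{L}$ and $\cal{L}'$, one wants to compare the classes $\alpha_{\cal{L}}$ and $\alpha_{\cal{L}'}$. Since extendability of the $G$-action is intrinsic to the abstract $G$-action on $X$ and does not mention any polarization, the cleanest route is to prove $(1)\iff(2)$ for a fixed but arbitrary $G$-invariant $\cal{L}$, which then forces $(2)$ and $(3)$ to be equivalent because each is separately equivalent to $(1)$. Recall from the preceding remark that a $G$-invariant polarization always exists (take $\bigotimes_{g\in G} g^*\cal{L}$), so the statements are non-vacuous.

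For $(1)\implies(2)$: suppose the $G$-action extends to some smooth model $\cal{X}_0$. Fix a $G$-invariant polarization $\cal{L}$. The model $\cal{X}_0$ need not be $\cal{L}$-terminal, but by Remark \ref{surgery} there exists an $\cal{L}$-terminal model $\cal{X}$, and the $G$-action extends to $\cal{X}$ as well — indeed, for each $g\in G$ the birational self-map $g_{\cal{X}}$ factors through an isomorphism on $\cal{X}_0$, and the surgery producing $\cal{X}$ from $\cal{X}_0$ is $G$-equivariant because $\cal{L}$ is $G$-invariant, so each $g^{\#}_{\cal{X}}$ extends to an isomorphism. By Proposition \ref{birational to weyl group}(2), $s_{g^{\#}_{\cal{X}}}=\mathrm{id}$ for every $g$, so $\alpha_{\cal{L},\cal{X}}$ is the trivial cocycle and $\alpha_{\cal{L}}$ is the trivial class.

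For $(2)\implies(1)$: suppose $\alpha_{\cal{L}}$ is trivial for some $G$-invariant $\cal{L}$. Pick any $\cal{L}$-terminal model $\cal{X}$ (which exists by Remark \ref{surgery}). Since $\alpha_{\cal{L}}$ is the trivial class, the cocycle-change formula of the last lemma lets us replace $\cal{X}$ by another $\cal{L}$-terminal model $\cal{X}^+$: concretely, triviality of the class means there is some $w\in\cal{W}_{X,\cal{L}}$ with $\alpha_{\cal{L},\cal{X}}(g)=w^{-1}\circ\mathrm{id}\circ g(w)$... more precisely, unwinding the definition of $\sim$ in Section \ref{1-cocycle sub-section}, triviality gives $w\in\cal{W}_{X,\cal{L}}$ with $w\,\alpha_{\cal{L},\cal{X}}(g)=g(w)$ for all $g$; by Corollary \ref{surjectivity improved} there is an $\cal{L}$-terminal birational map $f:\cal{X}\dashrightarrow\cal{X}^+$ with $s_f=w$, and the cocycle-change lemma then shows $\alpha_{\cal{L},\cal{X}^+}(g)=\mathrm{id}$ for every $g$. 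Hence $s_{g^{\#}_{\cal{X}^+}}=\mathrm{id}$, so by Proposition \ref{birational to weyl group}(2) each $g^{\#}_{\cal{X}^+}$ extends to an isomorphism, hence so does each $g_{\cal{X}^+}$, and the $G$-action extends to $\cal{X}^+$. This gives $(1)$.

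The main obstacle is bookkeeping rather than mathematics: one must check carefully that the surgery of Remark \ref{surgery} can be performed $G$-equivariantly (using $G$-invariance of $\cal{L}$), and that "triviality of the cohomology class" unwinds to exactly the hypothesis needed to invoke Corollary \ref{surjectivity improved} and the cocycle-change lemma. Both are formal consequences of the machinery already set up, so no genuinely new input is required.
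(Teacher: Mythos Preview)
Your argument follows essentially the same route as the paper's, but there are two points where your write-up is looser than the paper's.

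For $(2)\implies(1)$, there is a sign slip. With $\alpha_{\cal{L},\cal{X}}(g)=w^{-1}g(w)$ and the cocycle-change formula $\alpha_{\cal{L},\cal{X}^+}(g)=s_f^{-1}\,\alpha_{\cal{L},\cal{X}}(g)\,g(s_f)$, choosing $s_f=w$ gives $w^{-2}g(w)^2$, not the identity. You need $s_f=w^{-1}$, exactly as the paper takes it. This is a harmless bookkeeping error, but it should be fixed.

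For $(1)\implies(2)$ (equivalently $(1)\implies(3)$), your justification is too optimistic. Remark \ref{surgery} only asserts that \emph{some} $\cal{L}$-terminal model exists; it does not give a procedure that starts from a given smooth model $\cal{X}_0$ carrying the $G$-action and produces an $\cal{L}$-terminal one to which the action still extends. The sentence ``$g_{\cal{X}}$ factors through an isomorphism on $\cal{X}_0$'' does not by itself force $g^\#_{\cal{X}}$ to be an isomorphism on an unrelated $\cal{L}$-terminal $\cal{X}$. The paper closes this gap by invoking \cite[Proposition 4.6]{matsumoto_2021_extendability_automorphisms_k3surfaces}, which establishes precisely that an extendable action extends to some $\cal{L}$-terminal model (and the paper notes the proof works verbatim for not-necessarily-linear actions). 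You correctly flag this as ``the main obstacle,'' but it is not a formal consequence of the machinery already set up in this paper; it is genuine external input.
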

\begin{proof}
    Since $(3)\implies (2)$, it suffices to prove that $(1)\implies (3)$ and $(2)\implies (1)$. Assume the action of $G$ is extendable and let $\cal{L}$ be a polarization of $X$ fixed by $G$. Then the action extends to some $\cal{L}$-terminal model by \cite[Proposition 4.6]{matsumoto_2021_extendability_automorphisms_k3surfaces}, which holds for group actions that are not necessarily linear without any change to the proof. This shows $(1)\implies (3)$.

    Next, we prove $(2)\implies (1)$. Suppose $\alpha_{\cal{L}}$ is trivial for some $\cal{L}$ fixed by $G$. Let $\cal{X}$ be an $\cal{L}$-terminal model of $X$. Then there exists some $w\in \cal{W}_{X,\cal{L}}$ such that
    $$\alpha_{\cal{L},\cal{X}}(g)=w^{-1}g(w).$$
    By Corollary \ref{surjectivity improved}, let $f:\cal{X}\dashrightarrow\cal{X}^+$ be an $\cal{L}$-terminal birational map such that $w^{-1}=s_f$. Then for every $g\in G$, $$\alpha_{\cal{L},\cal{X}^+}(g)=s_f^{-1}\cdot\alpha_{\cal{L},\cal{X}}(g)\cdot g(s_f)=\mathrm{id}.$$
    This proves that the action of $G$ extends to $\cal{X}^+$.
\end{proof}

\section{Extendability and $\ell$-adic cohomology}\label{proof of the main results}

This section is based on \cite[Section 8 and 9]{Chiarellotto_2019_neron_ogg_shafarevich_criterion_k3surface}. In this section, we define the $\ell$-adic realization of $\alpha_\cal{L}$ and prove Theorem \ref{main theorem} and Theorem \ref{linear case main theorem}. Let $X$ be a K3 or Enriques surface over $K$ with good reduction. Let $G$ be a finite group acting on $X$ with base field $K_0$, such that $K/K_0$ is finite and unramified. Denote by $\sigma:G\to G_{K/K_0}$ the action of $G$ on $K$. Let $\cal{L}$ be a polarization on $X$ fixed by the $G$-action. Recall that we denote the subgroup of $\Pic(X^\circ)$ generated by the exceptional divisors of the minimal desingularization $X^\circ\to P(X,\cal{L})_k$ by $\Lambda_{X,\cal{L}}$. Let $\Lambda_{X,\cal{L},\ell}=\Lambda_{X,\cal{L}}\otimes\bb{Q}_{\ell}$.

\subsection{$G$-action and $\ell$-adic cohomology}\label{fibre product action}

Note that since $G$ does not necessarily act $K$-linearly on $X$, there is no natural $G$-action on $X_{K^{\mathrm{s}}}$. Instead, we consider the fibre product $\overline{G}=G\times_{G_{K/K_0}}G_{K_0}$ of $\sigma:G\to G_{K/K_0}$ and the restriction homomorphism $G_{K_0}\to G_{K/K_0}$. We define a right $\overline{G}$-action on $X_{K^\mathrm{s}}$ by letting $G$ act on $X$ and $G_{K_0}$ act on $\Spec K^{\mathrm{s}}$. Therefore, $\overline{G}$ acts on $H^2_{\et}(X_{K^{\mathrm{s}}},\bb{Q}_\ell(1))$ on the left via pullback, and on $\Aut(H^2_{\et}(X_{K^{\mathrm{s}}},\bb{Q}_\ell(1)))$ on the left via conjugation.

Via specialization, $G$ also acts on $X^\circ$. Let $k_0$ be the residue field of $K_0$.  We form the fibre product $\overline{G}_k=G\times_{G_{k/k_0}}G_{k_0}$. Similarly, $\overline{G}_k$ acts on $X^\circ_{\overline{k}}$, $H^2_{\et}(X^\circ_{\overline{k}},\bb{Q}_\ell(1))$ and $\Aut(H^2_{\et}(X^\circ_{\overline{k}},\bb{Q}_\ell(1)))$.

Moreover, the natural surjection $G_{K_0}\to G_{k_0}$ induces a surjection $\overline{G}\to\overline{G}_k$ with kernel $I_K$. Since $X$ has good reduction, $I_K$ acts trivially on $H^2_{\et}(X_{K^{\mathrm{s}}},\bb{Q}_\ell(1))$. Therefore, $\overline{G}_k\cong\overline{G}/I_K$ acts on $H^2_{\et}(X_{K^{\mathrm{s}}},\bb{Q}_\ell(1))$.

Note that we have the following short exact sequence \begin{equation}\label{semi-linear short exact sequence}
    1\to G_{k}\to\overline{G}_k\to G\to 1
\end{equation}
where the first homomorphism maps $\sigma$ to $(\mathrm{id}, \sigma)$ and the second homomorphism is projection. Then $G$ acts on $\Aut_{G_k}(H^2_{\et}(X_{K^{\mathrm{s}}},\bb{Q}_{\ell}(1)))$, $\Aut_{G_k}(H^2_{\et}(X^\circ_{\overline{k}},\bb{Q}_{\ell}(1)))$, $\Aut(H^2_{\et}(X_{K^{\mathrm{s}}},\bb{Q}_\ell(1))^{G_k})$, and $\Aut(H^2_{\et}(X^\circ_{\overline{k}},\bb{Q}_\ell(1))^{G_k})$, 

\subsection{$\ell$-adic realization of $\alpha_{\cal{L}}$}

\begin{lem}
    There exists a $G$-equivariant group homomorphism $$i_{\ell}:\cal{W}_{X,\cal{L}}\to \Aut_{G_k}(H^2_{\et}(X^\circ_{\overline{k}},\bb{Q}_{\ell}(1)))$$ such that for every irreducible component $E\subset E_{X,\cal{L}}$, $s_E$ maps to the automorphism $$H^2_{\et}(X^\circ_{\overline{k}},\bb{Q}_{\ell}(1))\to H^2_{\et}(X^\circ_{\overline{k}},\bb{Q}_{\ell}(1))\ \ \ \alpha\mapsto \alpha+([E]\cup\alpha)[E].$$
\end{lem}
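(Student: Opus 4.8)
The plan is to build $i_\ell$ from the cycle class map $[-]\colon\Pic(X^\circ)_{\bb Q}\hookrightarrow H:=H^2_{\et}(X^\circ_{\overline k},\bb Q_\ell(1))$, by letting $\cal W_{X,\cal L}$ act on the image of $[-]$ exactly as it already acts on $\Pic(X^\circ)_{\bb Q}$ and extending by the identity on an orthogonal complement. First I would record three facts about $N:=\im[-]$: it is fixed pointwise by $G_k$ (classes of divisors defined over $k$ are Galois invariant); the cycle class map is compatible with the pairings, $[D_1]\cup[D_2]=D_1.D_2$, which is part of the commutativity of \ref{l-adic analogue}; and the cup product restricts to a non-degenerate form on $N$. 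The last point follows from the Hodge index theorem when $X^\circ$ is K3 (so the intersection form on $\Pic(X^\circ)_{\bb Q}$ is non-degenerate), and for Enriques surfaces from the fact that there $[-]$ is an isomorphism onto $H$ with the non-degenerate $E_8\oplus U$ form on the source. Combined with Poincar\'e duality on $H$, this yields a decomposition $H=N\oplus N^\perp$ of $G_k$-modules with $G_k$ acting trivially on $N$.

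Then I would \emph{define} $i_\ell(w)$, for $w\in\cal W_{X,\cal L}\subseteq\Aut(\Pic(X^\circ)_{\bb Q})=\Aut(N)$, to act as $w$ on $N$ (through the identification $[-]$) and as the identity on $N^\perp$. This is visibly a group homomorphism $\cal W_{X,\cal L}\to\Aut(H)$, and it lands in $\Aut_{G_k}(H)$: on $N$ the $G_k$-action is trivial so any endomorphism commutes with it, on $N^\perp$ the map is the identity, and the decomposition is $G_k$-stable. Evaluating on a generator $s_E$: for $\alpha=[D]\in N$ one gets $i_\ell(s_E)(\alpha)=[s_E(D)]=[D]+([E]\cup[D])[E]$ from the compatibility with pairings and the definition of $s_E$, while for $\alpha\in N^\perp$ one has $[E]\cup\alpha=0$, hence $i_\ell(s_E)(\alpha)=\alpha$. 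So $i_\ell(s_E)$ is the automorphism $\alpha\mapsto\alpha+([E]\cup\alpha)[E]$ on all of $H$, as required.

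For $G$-equivariance I would observe that both $w\mapsto i_\ell(g\cdot w)$ and $w\mapsto g\cdot i_\ell(w)$ are group homomorphisms $\cal W_{X,\cal L}\to\Aut_{G_k}(H)$ (the second because $G=\overline G_k/G_k$ acts on $\Aut_{G_k}(H)$ by group automorphisms), so it suffices to compare them on the $s_E$. Since $\cal L$ is $G$-fixed, $g$ induces an automorphism of $P(X,\cal L)$ and hence of the exceptional locus $E_{X,\cal L}$; writing $E'$ for the image of $E$, the description of the $G$-action on $\cal W_{X,\cal L}$ by conjugation through $g_k^*$ gives $g\cdot s_E=s_{E'}$. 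On the other side, the $\overline G_k$-action on $H$ is pullback along automorphisms of the scheme $X^\circ_{\overline k}$, so it sends $[E]$ to $[E']$ and respects cup products; substituting into the formula for $i_\ell(s_E)$ produces $\alpha\mapsto\alpha+([E']\cup\alpha)[E']=i_\ell(s_{E'})$, i.e. $g\cdot i_\ell(s_E)=i_\ell(g\cdot s_E)$.

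The step I expect to be the real obstacle is the well-definedness implicit in the second paragraph: a priori one only has a prescription on the generating reflections, and promoting it to a homomorphism on $\cal W_{X,\cal L}$ requires that every relation among the $s_E$ in $\Aut(\Pic(X^\circ)_{\bb Q})$ still holds among the automorphisms $\alpha\mapsto\alpha+([E]\cup\alpha)[E]$ of $H$. This is exactly what the orthogonal decomposition $H=N\oplus N^\perp$ buys, since it reduces any such relation to the corresponding (true) relation on $N$ together with the trivial identity on $N^\perp$; thus the non-degeneracy of the intersection form on $\Pic(X^\circ)_{\bb Q}$ is the technical heart of the argument. The only other care needed is bookkeeping the semilinear $G$-action through the fibre-product groups $\overline G$ and $\overline G_k$ in the equivariance step, and this reduces to two facts already in hand: $G$ permutes the components of $E_{X,\cal L}$, and the $\overline G_k$-action on cohomology is geometric pullback.
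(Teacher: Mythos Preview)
Your proof is correct but uses a slightly different orthogonal decomposition than the paper. You split $H$ along $N=\im\bigl(\Pic(X^\circ)_{\bb Q}\hookrightarrow H\bigr)$ and its orthogonal complement, while the paper splits along the smaller subspace $\Lambda_{X,\cal L,\bb Q_\ell}$ (the span of the exceptional classes) and its complement. The resulting homomorphism $i_\ell$ is the same either way, since each reflection $s_E$ already acts trivially on $\Lambda_{X,\cal L}^\perp\cap\Pic(X^\circ)_{\bb Q}$; the difference is only in how you justify non-degeneracy. The paper's choice makes this step immediate---the intersection matrix on $\Lambda_{X,\cal L}$ is a direct sum of ADE Cartan matrices, hence negative definite---whereas you invoke the Hodge index theorem for K3 surfaces and a separate argument for Enriques surfaces. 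In exchange, your version is more explicit about $G$-equivariance, which the paper's proof leaves essentially unaddressed; your reduction to generators via $g\cdot s_E=s_{E'}$ and the compatibility of pullback with cup product is a clean way to handle it.
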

\begin{proof}
    We construct such $i_\ell$ as follows. There is a natural action of $\cal{W}_{X,\cal{L}}$ on $\Lambda_{X,\cal{L},\bb{Q}_\ell}$, which embeds into $H^2_{\et}(X^\circ_{\overline{k}},\bb{Q}_\ell(1))$. Let $\Lambda_{X,\cal{L},\bb{Q}_\ell}^\perp$ be the orthogonal complement of $\Lambda_{X,\cal{L},\bb{Q}_\ell}$ with respect to the cup product. Since the intersection pairing matrix of $\Lambda_{X,\cal{L},\bb{Q}_{\ell}}$ is negative definite, $H^2_{\et}(X^\circ_{\overline{k}},\bb{Q}_{\ell}(1))\cong \Lambda_{X,\cal{L},\bb{Q}_\ell}\oplus \Lambda_{X,\cal{L},\bb{Q}_\ell}^\perp$.
    
    We extend the action of $\cal{W}_{X,\cal{L}}$ on $\Lambda_{X,\cal{L},\bb{Q}_\ell}$ to $H^2_{\et}(X^\circ_{\overline{k}},\bb{Q}_{\ell}(1))$ by letting $\cal{W}_{X,\cal{L}}$ act trivially on $\Lambda_{X,\cal{L},\bb{Q}_\ell}^\perp$. This defines $i_{\ell}:\cal{W}_{X,\cal{L}}\to \Aut_{G_k}(H^2_{\et}(X^\circ_{\overline{k}},\bb{Q}_{\ell}(1)))$. We need to check that it has the desired property.

     Let $\alpha\in H^2_{\et}(X^\circ_{\overline{k}},\bb{Q}_{\ell}(1)))$ which decomposes uniquely as $[E']+\alpha'$ where $E'\in\Lambda_{X,\cal{L},\bb{Q}_\ell}$ and $\alpha'\in\Lambda_{X,\cal{L},\bb{Q}_\ell}^\perp$. Then for every irreducible component $E\subset E_{X,\cal{L}}$,
     $$s_E(\alpha)=[E']+(E.E')[E]+\alpha'=\alpha+(E.E')[E]=\alpha+([E]\cup\alpha)[E].$$
     This finishes the proof.
\end{proof}

\begin{prop}\cite[c.f.][Theorem 8.4]{Chiarellotto_2019_neron_ogg_shafarevich_criterion_k3surface}\label{8.4}
    Let $X$ be a K3 or Enriques surface over $K$ with good reduction. Let $\cal{L}$ be a polarization of $X$ and $f:\cal{X}\dashrightarrow\cal{X}^+$ an $\cal{L}$-terminal birational map between smooth models of $X$. Then $$i_{\ell}(s_f)=s_{f,\ell}.$$
\end{prop}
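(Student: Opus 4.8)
The plan is to reduce the claim $i_\ell(s_f) = s_{f,\ell}$ to a computation on the lattice $\Lambda_{X,\cal{L},\bb{Q}_\ell} \subset H^2_{\et}(X^\circ_{\overline{k}},\bb{Q}_\ell(1))$ and on its orthogonal complement separately, using the orthogonal decomposition $H^2_{\et}(X^\circ_{\overline{k}},\bb{Q}_\ell(1)) \cong \Lambda_{X,\cal{L},\bb{Q}_\ell} \oplus \Lambda_{X,\cal{L},\bb{Q}_\ell}^\perp$ that underlies the construction of $i_\ell$. By Proposition \ref{birational to weyl group}, $s_f \in \cal{W}_{X,\cal{L}}$, so $i_\ell(s_f)$ makes sense. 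By definition $i_\ell(s_f)$ acts on $\Lambda_{X,\cal{L},\bb{Q}_\ell}$ via the tautological action of $\cal{W}_{X,\cal{L}}$ on $\Pic(X^\circ)_{\bb{Q}}$ (tensored with $\bb{Q}_\ell$, and transported through the cycle class map, which is compatible with intersection/cup products by the diagram \eqref{l-adic analogue}) and acts as the identity on $\Lambda_{X,\cal{L},\bb{Q}_\ell}^\perp$. So it suffices to show that $s_{f,\ell}$ does the same two things.

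First I would extend $K$ so that the irreducible components $E_i$ of $E_{X,\cal{L}}$ are geometrically irreducible and $P(X,\cal{L})_k$ has only ADE singularities; this is harmless because the formation of both $i_\ell$ and $s_{f,\ell}$ is compatible with unramified base change (as in the proof of Proposition \ref{birational to weyl group}, where $\cal{W}_{X,\cal{L}} = \cal{W}_{X_L,\cal{L}_L}^{G_{L/K}}$, and $s_{f,\ell}$ is defined via the graph $\Gamma_{f,k}$ which behaves well under base change). In this situation the proof of Proposition \ref{birational to weyl group} produces the explicit graph description $\Gamma_{f,k} = \Delta_{X^\circ} + \sum_{i,j} b_{ij}(E_i \times E_j)$, whence
\begin{equation}\label{sfl-explicit}
s_{f,\ell}(\alpha) = \alpha + \sum_{i,j} b_{ij}([E_j]\cup\alpha)[E_i]
\end{equation}
for $\alpha \in H^2_{\et}(X^\circ_{\overline{k}},\bb{Q}_\ell(1))$; this follows from the definition $\tilde s_{f,\ell}(\alpha) = p_{1*}([\Gamma_{f,k}]\cup p_2^*\alpha)$ together with the projection formula and the identity $p_{1*}(p_2^*\alpha) = 0$ on a product of surfaces for $\alpha$ in degree $2$. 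Formula \eqref{sfl-explicit} shows at once that $s_{f,\ell}$ acts trivially on $\Lambda_{X,\cal{L},\bb{Q}_\ell}^\perp$, since every $[E_j]$ lies in $\Lambda_{X,\cal{L},\bb{Q}_\ell}$ and pairs to zero with $\Lambda_{X,\cal{L},\bb{Q}_\ell}^\perp$; and it shows that the image of $s_{f,\ell}$ is contained in $\Lambda_{X,\cal{L},\bb{Q}_\ell}$ plus the original class, i.e.\ $s_{f,\ell}$ preserves the decomposition and is the identity on the second summand. On $\Lambda_{X,\cal{L},\bb{Q}_\ell}$, \eqref{sfl-explicit} is exactly the $\bb{Q}_\ell$-linear extension of the endomorphism $s_f$ of $\Pic(X^\circ)_{\bb{Q}}$ written in the same form in the proof of Proposition \ref{birational to weyl group}, transported via the cycle class map; by the commutativity of \eqref{l-adic analogue} this agrees with the action of $i_\ell(s_f)$ on $\Lambda_{X,\cal{L},\bb{Q}_\ell}$. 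Comparing the two summands gives $i_\ell(s_f) = s_{f,\ell}$.

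The main obstacle is making precise the two "compatibility with base change" claims and the passage to the geometrically-irreducible ADE situation: one must check that enlarging $K$ does not move the class $i_\ell(s_f)$ and does not move $s_{f,\ell}$, i.e.\ that both are computed from data (the exceptional curves, the graph $\Gamma_{f,k}$, the cup product) that is stable under unramified base change, and that the decomposition $\Lambda \oplus \Lambda^\perp$ is compatible with it. Once that bookkeeping is in place, the remaining input is the explicit graph formula from the proof of Proposition \ref{birational to weyl group} and the standard facts about the cycle class map (compatibility with cup products, pushforward, and the projection formula) quoted from \cite[Corollary 10.7, Theorem 11.1]{Etale_cohomology_milne_book}, and the computation is immediate.
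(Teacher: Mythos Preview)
Your proof is correct and follows the same decomposition strategy as the paper's: compare $i_\ell(s_f)$ and $s_{f,\ell}$ separately on $\Lambda_{X,\cal{L},\bb{Q}_\ell}$ and on $\Lambda_{X,\cal{L},\bb{Q}_\ell}^\perp$. The paper avoids your base-change detour entirely: on $\Lambda_{X,\cal{L},\bb{Q}_\ell}$ it invokes diagram~\eqref{l-adic analogue} directly to get $s_{f,\ell}([D])=[s_f(D)]$ and then checks $[s_f(D)]=i_\ell(s_f)([D])$ on the generators $s_E$ of $\cal{W}_{X,\cal{L}}$; on $\Lambda_{X,\cal{L},\bb{Q}_\ell}^\perp$ it simply notes that $\Gamma_{f,k}$ differs from the graph of $f_k$ by a cycle supported on the product of the exceptional loci (a fact requiring no geometric-irreducibility hypothesis), so the ``main obstacle'' you flag never arises.
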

\begin{proof}
    We first prove that $i_\ell(s_f)$ and $s_{f,\ell}$ agree on the subspace $\Lambda_{X,\cal{L},\bb{Q}_\ell}$. By the commutative diagram \ref{l-adic analogue}, for any $D\in \Lambda_{X,\cal{L},\bb{Q}_\ell}$, $s_{f,\ell}([D])=[s_f(D)]$. We need to show that $$[s_f(D)]=i_\ell(s_f)([D]).$$
    Since $s_f\in\cal{W}_{X,\cal{L}}$, it suffices to check this for $s_f=s_E$. This is obvious by the construction of $i_\ell$.

    It remains to prove that $i_\ell(s_f)$ and $s_{f,\ell}$ agree on $\Lambda_{X,\cal{L},\bb{Q}_\ell}^\perp$. Indeed, both of them are the identity on $\Lambda_{X,\cal{L},\bb{Q}_\ell}^\perp$ by the constructions of $i_\ell$ and $s_{f,\ell}$.
\end{proof}

\subsection{Proof of Theorem \ref{main theorem}}

Define $j_\ell$ as the composition of $i_\ell$ with the restriction $\Aut_{G_k}(H^2_{\et}(X^\circ_{\overline{k}},\bb{Q}_{\ell}(1)))\to\Aut(H^2_{\et}(X^\circ_{\overline{k}},\bb{Q}_\ell(1))^{G_k})$, $$j_\ell:\cal{W}_{X,\cal{L}}\xrightarrow{i_\ell}\Aut_{G_k}(H^2_{\et}(X^\circ_{\overline{k}},\bb{Q}_{\ell}(1)))\to\Aut(H^2_{\et}(X^\circ_{\overline{k}},\bb{Q}_\ell(1))^{G_k}).$$
Then $j_\ell$ induces a pointed map
$${j_\ell}_*:H^1(G,\cal{W}_{X,\cal{L}})\to H^1(G,\Aut(H^2_{\et}(X^\circ_{\overline{k}},\bb{Q}_\ell(1))^{G_k})).$$ Let $\beta_{\cal{L},\ell}={j_\ell}_*(\alpha_{\cal{L},\ell})$.

\begin{lem}\label{trivial kernel}
    The pointed map ${j_{\ell}}_*$ has trivial kernel.
\end{lem}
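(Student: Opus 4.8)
The plan is to translate triviality of ${j_\ell}_*[\alpha]$ into an isomorphism of $\ell$-adic $G$-modules, discard the part of the cohomology on which $\cal{W}_{X,\cal{L}}$ acts trivially, descend to the reflection representation of the Weyl group over $\bb{R}$, and finish using its Weyl chamber structure.

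Given $\alpha\in Z^1(G,\cal{W}_{X,\cal{L}})$ representing a class in $\ker {j_\ell}_*$, set $M=H^2_{\et}(X^\circ_{\overline{k}},\bb{Q}_\ell(1))^{G_k}$ and $N=\Lambda_{X,\cal{L},\bb{Q}_\ell}\subseteq M$ (the exceptional classes are $G_k$-invariant). By Lemma \ref{twist a module} there is a $G$-isomorphism $M\cong M^{j_\ell\circ\alpha}$. Since the intersection pairing on $N$ is negative definite, $M=N\oplus N^\perp$ as $G$-modules (orthogonal complement in $M$), and $j_\ell(\cal{W}_{X,\cal{L}})$ acts on $N$ through $\cal{W}_{X,\cal{L}}$ and trivially on $N^\perp$, so $M^{j_\ell\circ\alpha}=N^\alpha\oplus N^\perp$ as $G$-modules. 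As $\bb{Q}_\ell[G]$ is semisimple, cancellation yields a $G$-isomorphism $N\cong N^\alpha$ of $\bb{Q}_\ell[G]$-modules. The $\cal{W}_{X,\cal{L}}$- and $G$-actions being $\bb{Q}$-linear, $N$ and $N^\alpha$ arise from $\bb{Q}[G]$-modules by base change, and a $\bb{Q}_\ell$-isomorphism between them forces one over $\bb{Q}$ (the $G$-equivariant isomorphisms are the non-vanishing locus of a nonzero polynomial on the $\bb{Q}$-vector space $\Hom_{\bb{Q}[G]}(N^\alpha_{\bb{Q}},N_{\bb{Q}})$, and $\bb{Q}$ is infinite), hence one over $\bb{R}$. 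Writing $\rho\colon G\to O(\Lambda_{X,\cal{L},\bb{R}})$ for the action (it, like $\cal{W}_{X,\cal{L}}$, preserves the negative definite form) and picking $f$ with $\rho^\alpha(g):=\alpha(g)\rho(g)=f^{-1}\rho(g)f$, a polar decomposition argument---$ff^{\dagger}$ commutes with $\rho(G)$, hence so does its positive square root $Q$, and $u:=Q^{-1}f\in O(\Lambda_{X,\cal{L},\bb{R}})$ still conjugates $\rho$ to $\rho^\alpha$---lets us take $f=u$ to be an isometry.

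It then remains to deduce $[\alpha]=0$. Let $R$ be the root system of $(-2)$-classes, so $\Aut(R)=\cal{W}_{X,\cal{L}}\rtimes\Gamma_0$ with $\Gamma_0$ the stabiliser of the chamber $C_0$ whose walls are the components of $E_{X,\cal{L}}$; as $G$ permutes those components, $\rho(G)\subseteq\Gamma_0$. A torsor computation on the $\cal{W}_{X,\cal{L}}$-set of chambers shows $[\alpha]=0$ if and only if $\rho^\alpha\colon G\to\Aut(R)$ stabilises a chamber, equivalently if and only if $(\Lambda_{X,\cal{L},\bb{R}})^{\rho^\alpha(G)}$ meets the complement of the reflection hyperplanes. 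Now $(\Lambda_{X,\cal{L},\bb{R}})^{\rho^\alpha(G)}=u^{-1}\!\big((\Lambda_{X,\cal{L},\bb{R}})^{\rho(G)}\big)$; the target contains regular vectors ($\rho(G)\subseteq\Gamma_0$ acts on the open cone $C_0$, hence fixes a point of it), and $\rho^\alpha\cong\rho$ forces the two invariant subspaces to have equal dimension. Showing that these facts, together with the combinatorics of $R$, place a regular vector into $(\Lambda_{X,\cal{L},\bb{R}})^{\rho^\alpha(G)}$---equivalently, upgrading the $O(\Lambda_{X,\cal{L},\bb{R}})$-conjugacy of $\rho$ and $\rho^\alpha$ to a $\cal{W}_{X,\cal{L}}$-conjugacy---is the heart of the lemma, and the only place where the reflection-group structure of $\cal{W}_{X,\cal{L}}$ (rather than just its finiteness) is used; I expect this to be the main obstacle, and would carry it out by following the corresponding step of \cite{Chiarellotto_2019_neron_ogg_shafarevich_criterion_k3surface}.
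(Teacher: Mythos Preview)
Your reduction from $M=H^2_{\et}(X^\circ_{\overline{k}},\bb{Q}_\ell(1))^{G_k}$ to $N=\Lambda_{X,\cal{L},\bb{Q}_\ell}$ via the orthogonal splitting $M=N\oplus N^\perp$ and semisimplicity of $\bb{Q}_\ell[G]$ is exactly the paper's argument. At that point the paper stops: it simply invokes \cite[Theorem~4.1]{Chiarellotto_2019_neron_ogg_shafarevich_criterion_k3surface}, which asserts directly that the map $H^1(G,\cal{W}_{X,\cal{L}})\to H^1(G,\Aut(\Lambda_{X,\cal{L},\bb{Q}_\ell}))$ induced by the reflection representation has trivial kernel, and the lemma is done.

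Everything you write after obtaining $N\cong N^\alpha$---the descent from $\bb{Q}_\ell$ to $\bb{Q}$ to $\bb{R}$, the polar decomposition to produce an isometric intertwiner, and the chamber/regular-vector setup---is a partial unpacking of the \emph{proof} of that cited theorem. You correctly identify the crux (upgrading $O(\Lambda_{X,\cal{L},\bb{R}})$-conjugacy of $\rho$ and $\rho^\alpha$ to $\cal{W}_{X,\cal{L}}$-conjugacy), and you correctly note that this is where the reflection-group structure enters; but since you then defer that step to the very reference the paper cites, the extra work buys nothing. Your proposal is correct, but circuitous: you may cite \cite[Theorem~4.1]{Chiarellotto_2019_neron_ogg_shafarevich_criterion_k3surface} immediately after cancelling $N^\perp$ and conclude.
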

\begin{proof}
    Denote the action of $\cal{W}_{X,\cal{L}}$ on $\Lambda_{X,\cal{L},\bb{Q}_{\ell}}$ by $j'_\ell:\cal{W}_{X,\cal{L}}\to\Aut(\Lambda_{X,\cal{L},\bb{Q}_{\ell}})$, so we have a pointed map $${j'_\ell}_*:H^1(G,\mathcal{W}_{X,\cal{L}})\to H^1(G,\Aut(\Lambda_{X,\cal{L},\bb{Q}_{\ell}})).$$ By \cite[Theorem 4.1]{Chiarellotto_2019_neron_ogg_shafarevich_criterion_k3surface}, ${j'_\ell}_*$ has trivial kernel, so it suffices to prove that ${j_\ell}_*$ and ${j'_\ell}_*$ have the same kernel.

    The cycle class map gives a $G$-injection $\Lambda_{X,\cal{L},\bb{Q}_{\ell}}\to H^2_{\et}(X^\circ_{\overline{k}},\bb{Q}_\ell(1))^{G_k}$. Denote by $T_{\ell}$ the quotient and consider the short exact sequence of $G$-representations $$0\to\Lambda_{X,\cal{L},\bb{Q}_{\ell}}\to H^2_{\et}(X^\circ_{\overline{k}},\bb{Q}_\ell(1))^{G_k}\to T_{\ell}\to0.$$ Suppose $\alpha\in H^1(G,\cal{W}_{X,\cal{L}})$. By Proposition \ref{twist a module}, $${j'_\ell}_*(\alpha)\text{ is trivial}\iff \Lambda_{X,\cal{L},\bb{Q}_{\ell}}\cong\Lambda_{X,\cal{L},\bb{Q}_{\ell}}^\alpha,$$ $${j_\ell}_*(\alpha)\text{ is trivial}\iff H^2_{\et}(X^\circ_{\overline{k}},\bb{Q}_\ell(1))^{G_k}\cong H^2_{\et}(X^\circ_{\overline{k}},\bb{Q}_\ell(1))^{G_k,\alpha}.$$ Next, by the construction of the action of $\cal{W}_{X,\cal{L}}$ on $H^2_{\et}(X^\circ_{\overline{k}},\bb{Q}_\ell(1))$, $\cal{W}_{X,\cal{L}}$ acts trivially on $T_{\ell}$, so $T_{\ell}\cong T_{\ell}^\alpha$. Finally, by the semi-simplicity of the representations of the finite group $G$, we have $$\Lambda_{X,\cal{L},\bb{Q}_{\ell}}\cong\Lambda_{X,\cal{L},\bb{Q}_{\ell}}^\alpha\iff H^2_{\et}(X^\circ_{\overline{k}},\bb{Q}_\ell(1))^{G_k}\cong H^2_{\et}(X^\circ_{\overline{k}},\bb{Q}_\ell(1))^{G_k,\alpha}.$$ This finishes the proof.
\end{proof}

In particular, $\beta_{\cal{L},\ell}$ controls the extendability of the $G$-action by Theorem \ref{first criterion}.

\begin{prop}\cite[c.f.][Theorem 8.6]{Chiarellotto_2019_neron_ogg_shafarevich_criterion_k3surface}\label{semi-linear case twist}
    There is a $G$-isomorphism $$H^2_{\et}(X_{K^{\mathrm{s}}},\bb{Q}_\ell(1))^{G_k}\cong H^2_{\et}(X^\circ_{\overline{k}},\bb{Q}_\ell(1))^{G_k,\beta_{\cal{L},\ell}}.$$
\end{prop}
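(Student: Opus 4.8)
The plan is to take the comparison (smooth base change) isomorphism, restricted to $G_k$-invariants, as the candidate isomorphism, and to show that twisting by $\beta_{\cal{L},\ell}$ is precisely what is needed to make it $G$-equivariant.

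Fix an $\cal{L}$-terminal model $\cal{X}$ of $X$, so that $\cal{X}_k=X^\circ$; this is permissible because $\alpha_{\cal{L}}$, hence $\beta_{\cal{L},\ell}$, is independent of the choice of $\cal{L}$-terminal model (see the discussion preceding Theorem \ref{first criterion}), and by Lemma \ref{twist a module} the isomorphism class of $H^2_{\et}(X^\circ_{\overline k},\bb{Q}_\ell(1))^{G_k,\beta_{\cal{L},\ell}}$ depends only on the cohomology class $\beta_{\cal{L},\ell}$. Write $V=H^2_{\et}(X_{K^{\mathrm{s}}},\bb{Q}_\ell(1))$ and $W=H^2_{\et}(X^\circ_{\overline k},\bb{Q}_\ell(1))$, and let $\phi\colon W\to V$ be the comparison isomorphism of Lemma \ref{generic pullback}. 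It is $G_K$-equivariant for the action on $W$ through $G_K\twoheadrightarrow G_k$ and on $V$ through $G_K\twoheadrightarrow G_K/I_K\cong G_k$ (using good reduction), hence restricts to a linear isomorphism $W^{G_k}\xrightarrow{\sim}V^{G_k}$. Let $\rho_V$, $\rho_W$ be the natural $G$-actions on $V^{G_k}$, $W^{G_k}$ induced by the exact sequence \ref{semi-linear short exact sequence}. By Proposition \ref{8.4} together with the construction of $\beta_{\cal{L},\ell}$, the cocycle $g\mapsto s_{g^\#_\cal{X},\ell}|_{W^{G_k}}$, where $s_{g^\#_\cal{X},\ell}=i_\ell(s_{g^\#_\cal{X}})\in\Aut_{G_k}(W)$, represents $\beta_{\cal{L},\ell}$, so the $\beta_{\cal{L},\ell}$-twisted $G$-action on $W^{G_k}$ is $g\mapsto s_{g^\#_\cal{X},\ell}|_{W^{G_k}}\circ\rho_W(g)$. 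It therefore suffices to prove that, on $W^{G_k}$, $$\phi\circ s_{g^\#_\cal{X},\ell}\circ\rho_W(g)=\rho_V(g)\circ\phi\qquad\text{for all }g\in G.$$

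To prove this identity I would, for each $g\in G$, choose $\widetilde\tau\in G_{K_0}$ restricting to $\sigma_g\in G_{K/K_0}$, so that $(g,\widetilde\tau)\in\overline{G}$ lies over $(g,\tau)\in\overline{G}_k$ and over $g$; then $\rho_W(g)$ and $\rho_V(g)$ are pullback along the automorphisms of $X^\circ_{\overline k}$ and $X_{K^{\mathrm{s}}}$ induced by $(g,\tau)$ and $(g,\widetilde\tau)$. Using the factorization $g=\sigma_g\circ g^\#$ with $g^\#\colon X\to X^{\sigma_g}$ linear over $K$, the automorphism of $X^\circ_{\overline k}$ induced by $(g,\tau)$ decomposes as the base change to $\overline k$ of the $k$-linear map $(g^\#)_k\colon X^\circ\to(X^\circ)^{\sigma_g}$ followed by the $\tau$-twisted identification $(X^\circ)^{\sigma_g}_{\overline k}\xrightarrow{\sim}X^\circ_{\overline k}$, and similarly over $K^{\mathrm{s}}$. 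Applying Lemma \ref{generic pullback} to $f=g^\#_\cal{X}\colon\cal{X}\dashrightarrow\cal{X}^{\sigma_g}$ gives $\phi\circ s_{g^\#_\cal{X},\ell}=(g^\#)^*_K\circ\phi^{\sigma_g}\circ((g^\#)^*_k)^{-1}$, with $\phi^{\sigma_g}$ the comparison isomorphism of $\cal{X}^{\sigma_g}$; substituting the factorization of $\rho_W(g)$ cancels $((g^\#)^*_k)^{-1}$ against the pullback by $(g^\#)_k$ and leaves the pullback $t^\circ$ by the $\tau$-twist; the functoriality of the comparison isomorphism under the base change $\sigma_g$ of $\Spec\cal{O}_K$ turns $\phi^{\sigma_g}\circ t^\circ$ into $t^K\circ\phi$, where $t^K$ is the pullback by the $\widetilde\tau$-twist over $K^{\mathrm{s}}$; and finally $(g^\#)^*_K\circ t^K$ is exactly the pullback along the automorphism of $X_{K^{\mathrm{s}}}$ induced by $(g,\widetilde\tau)$, i.e.\ $\rho_V(g)$. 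Restricting to $W^{G_k}$ loses nothing, since $s_{g^\#_\cal{X},\ell}$ is $G_k$-equivariant and replacing $\widetilde\tau$ by another lift of $g$ changes $\rho_W(g)$ and $\rho_V(g)$ only by elements of $G_k$, acting trivially on the respective invariant subspaces.

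The main obstacle is this last step: the careful bookkeeping of the base changes by $\sigma_g$ at the levels of $\cal{O}_K$, $K$, $K^{\mathrm{s}}$, $k$ and $\overline k$, the decomposition of the $\overline{G}$- and $\overline{G}_k$-actions through $g=\sigma_g\circ g^\#$, and the compatibility of the comparison isomorphism with all of these — in the $K$-linear case of \cite[Theorem 8.6]{Chiarellotto_2019_neron_ogg_shafarevich_criterion_k3surface} the twisting automorphisms $\sigma_g$ vanish and this is appreciably shorter. A minor point that also needs checking is that $g\mapsto s_{g^\#_\cal{X},\ell}|_{W^{G_k}}$ is a $1$-cocycle representing $\beta_{\cal{L},\ell}$; this follows from Lemma \ref{ugly cocycle lemma}, the $G$-equivariance of $i_\ell$ (hence of $j_\ell$), and Proposition \ref{8.4}.
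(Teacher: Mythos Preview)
Your proposal is correct and follows essentially the same approach as the paper: take the comparison isomorphism $\phi$ as the candidate, reduce $G$-equivariance of the twist to the identity $\phi\circ s_{g^\#_{\cal X},\ell}\circ\rho_W(g)=\rho_V(g)\circ\phi$, expand $s_{g^\#_{\cal X},\ell}$ via Lemma \ref{generic pullback} and Proposition \ref{8.4}, factor $\rho_W(g)$ and $\rho_V(g)$ through $g=\sigma_g\circ g^\#$, and finish by the compatibility of $\phi$ with the Galois base change $\overline\sigma_g$. The only cosmetic difference is that the paper lifts $\sigma_g$ to $G_{k_0}$ (working directly in $\overline{G}_k$) whereas you lift to $G_{K_0}$ and then pass to $\overline{G}_k$ via $\overline{G}\twoheadrightarrow\overline{G}_k$; since $I_K$ acts trivially this is immaterial.
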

\begin{proof}
    Since $X$ has good reduction over $K$, we have a $G_k$-equivariant isomorphism $$\phi:H^2_{\et}(X^\circ_{\overline{k}},\bb{Q}_\ell(1))\to H^2_{\et}(X_{K^{\mathrm{s}}},\bb{Q}_\ell(1)),$$
    which restricts to an isomorphism $H^2_{\et}(X^\circ_{\overline{k}},\bb{Q}_\ell(1))^{G_k}\to H^2_{\et}(X_{K^{\mathrm{s}}},\bb{Q}_\ell(1))^{G_k}$. We claim that this gives a $G$-isomorphism if we twist $H^2_{\et}(X^\circ_{\overline{k}},\bb{Q}_\ell(1))^{G_k}$ by $\beta_{\cal{L},\ell}$. Let $g\in G$ and let $\overline{g}$ be any lift of $g$ to $\overline{G}_k$. We need to show that
    \begin{align*}
        & \phi\circ\beta_{\cal{L},\ell}(g)\circ \overline{g}_k^*=\overline{g}^*\circ\phi.
    \end{align*}
    By Proposition \ref{8.4} and Lemma \ref{generic pullback}, $$\beta_{\cal{L},\ell}(g)=(i_\ell\circ\alpha_\cal{L})(g)=i_\ell(s_{g^{\#}_{\cal{X}}})=s_{g^{\#}_{\cal{X}},\ell}=\phi^{-1}\circ (g^{\#})^*\circ\phi\circ((g^{\#}_k)^*)^{-1}.$$
    It suffices to prove that
    \begin{equation}\label{ladic group action}
        \overline{g}^*\circ\phi\circ (\overline{g}_k^*)^{-1}=(g^{\#})^*\circ\phi\circ((g^{\#}_k)^*)^{-1}.
    \end{equation}

    Let $\overline{\sigma}_g$ be any lift of $\sigma_g\in G_{k/k_0}$ to $G_{k_0}$, so $(g,\overline{\sigma}_g)$ lifts $g$ to $\overline{G}_k$. Therefore, $\overline{g}*=(g^\#)^*\circ\overline{\sigma}_g^*$ and $\overline{g}_k^*=(g_k^\#)\circ\overline{\sigma}_g^*$. Then the LHS and the RHS of \ref{ladic group action} fit in the following diagram $$\xymatrix{H^2_{\et}(X^\circ_{\overline{k}},\bb{Q}_{\ell}(1)) \ar@{->}[r]^{((g^{\#})^*)^{-1}} & H^2_{\et}((X^{\circ}_{\overline{k}})^{\overline{\sigma}_g},\bb{Q}_{\ell}(1)) \ar@{->}[r]^{(\overline{\sigma}_g^*)^{-1}} \ar@{->}[d]_{\phi} & H^2_{\et}(X^\circ_{\overline{k}},\bb{Q}_{\ell}(1)) \ar@{->}[d]_{\phi} \\ H^2_{\et}(X_{\overline{K}},\bb{Q}_{\ell}(1)) & H^2_{\et}(X_{\overline{K}}^{\overline{\sigma}_g},\bb{Q}_{\ell}(1)) \ar@{->}[l]_{(g^{\#})^*} & H^2_{\et}(X_{\overline{K}},\bb{Q}_{\ell}(1)) \ar@{->}[l]_{\overline{\sigma}_g^*}}$$
    To prove \ref{ladic group action}, it suffices to show that the square commutes, which is obvious.
\end{proof}

Now we obtain the following theorem.

\begin{thm}\label{refined main theorem}
    Let $X$ be a K3 or Enriques surface over $K$ with good reduction. Let $G$ be a finite group acting on $X$ with base field $K_0$. Assume $K/K_0$ is finite and unramified. Then the following are equivalent.
    \begin{enumerate}
        \item The $G$-action on $X$ is extendable.
        \item There is a $\overline{G}_k$-isomorphism $$H^2_{\et}(X_{K^{\mathrm{s}}},\bb{Q}_\ell(1))\cong H^2_{\et}(X^\circ_{\overline{k}},\bb{Q}_\ell(1)).$$
        \item There is a $G$-isomorphism $$H^2_{\et}(X_{K^{\mathrm{s}}},\bb{Q}_\ell(1))^{G_k}\cong H^2_{\et}(X^\circ_{\overline{k}},\bb{Q}_\ell(1))^{G_k}.$$
    \end{enumerate}
\end{thm}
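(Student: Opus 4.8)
The plan is to deduce Theorem~\ref{refined main theorem} by combining the cocycle-theoretic criterion of Theorem~\ref{first criterion} with the $\ell$-adic realization machinery just developed. First I would establish the chain of equivalences $(1)\iff(3)\iff(2)$, treating $(3)$ as the central hub since it is the condition appearing in Theorem~\ref{main theorem}.

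For $(1)\iff(3)$: fix a polarization $\cal{L}$ of $X$ fixed by the $G$-action (which exists since $G$ is finite). By Theorem~\ref{first criterion}, extendability of the $G$-action is equivalent to triviality of the cohomology class $\alpha_{\cal{L}}\in H^1(G,\cal{W}_{X,\cal{L}})$. By Lemma~\ref{trivial kernel}, the pointed map ${j_\ell}_*$ has trivial kernel, so $\alpha_{\cal{L}}$ is trivial if and only if $\beta_{\cal{L},\ell}={j_\ell}_*(\alpha_{\cal{L},\ell})$ is trivial. Then Lemma~\ref{twist a module} says $\beta_{\cal{L},\ell}$ is trivial if and only if there is a $G$-isomorphism $H^2_{\et}(X^\circ_{\overline{k}},\bb{Q}_\ell(1))^{G_k}\cong H^2_{\et}(X^\circ_{\overline{k}},\bb{Q}_\ell(1))^{G_k,\beta_{\cal{L},\ell}}$, and Proposition~\ref{semi-linear case twist} identifies the target with $H^2_{\et}(X_{K^{\mathrm{s}}},\bb{Q}_\ell(1))^{G_k}$ as a $G$-module. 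Stringing these together yields $(1)\iff(3)$.

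For $(2)\implies(3)$: the exact sequence \ref{semi-linear short exact sequence} shows $G_k\subset\overline{G}_k$, so a $\overline{G}_k$-isomorphism $H^2_{\et}(X_{K^{\mathrm{s}}},\bb{Q}_\ell(1))\cong H^2_{\et}(X^\circ_{\overline{k}},\bb{Q}_\ell(1))$ restricts to an isomorphism of $G_k$-invariants, which is then $\overline{G}_k/G_k=G$-equivariant. For $(3)\implies(2)$ — which I expect to be the main obstacle — I would argue as follows: assuming $(3)$, by what was just shown the class $\alpha_{\cal{L}}$ is trivial, so the $G$-action extends to some smooth model $\cal{X}$. Then the comparison isomorphism $\phi:H^2_{\et}(X^\circ_{\overline{k}},\bb{Q}_\ell(1))\to H^2_{\et}(X_{K^{\mathrm{s}}},\bb{Q}_\ell(1))$ coming from smooth base change is automatically $\overline{G}_k$-equivariant: it is $G_k$-equivariant as always, and the extended $G$-action on $\cal{X}$ makes it compatible with the $G$-action on both sides via the same kind of base-change compatibility argument as in the proof of Proposition~\ref{semi-linear case twist} (there with $s_{g^\#_{\cal{X}},\ell}=\mathrm{id}$ since $g^\#_{\cal{X}}$ extends to an isomorphism). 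Combining the $G_k$-equivariance and the $G$-equivariance over the common quotient structure gives $\overline{G}_k$-equivariance. The delicate point here is checking that the two semi-linear actions genuinely assemble along the fibre product $\overline{G}_k=G\times_{G_{k/k_0}}G_{k_0}$ in a compatible way, i.e.\ that the chosen lifts and base-change identifications are coherent — this is the bookkeeping that Section~\ref{fibre product action} was set up to handle, and it should go through by the same diagram chase used for \ref{ladic group action}.

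Finally I would note that Theorem~\ref{main theorem} is the equivalence $(1)\iff(3)$ of this theorem, so no separate proof of Theorem~\ref{main theorem} is needed; it is subsumed here.
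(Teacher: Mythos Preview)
Your proposal is correct and follows essentially the same approach as the paper. The only cosmetic difference is the ordering: the paper proves $(1)\Rightarrow(2)$, $(2)\Rightarrow(3)$, and then $(1)\iff(3)$, whereas you prove $(1)\iff(3)$ first and then close the loop via $(2)\Rightarrow(3)$ and $(3)\Rightarrow(1)\Rightarrow(2)$; the key ingredients (Theorem~\ref{first criterion}, Lemma~\ref{trivial kernel}, Proposition~\ref{semi-linear case twist}, and the $\overline{G}_k$-equivariance of the comparison isomorphism once the action extends) are identical in both.
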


\begin{proof}
    Suppose the $G$-action on $X$ extends to a smooth model $\cal{X}$ of $X$. The comparison isomorphism $H^2_{\et}(X_{K^{\mathrm{s}}},\bb{Q}_\ell(1))\cong H^2_{\et}(X^\circ_{\overline{k}},\bb{Q}_\ell(1))$ given by $\cal{X}$ is equivariant with respect to the action of $\overline{G}_k$. Therefore, $(1)\implies (2)$.
    
    Also, the restriction of a $\overline{G}_k$-isomorphism $H^2_{\et}(X_{K^{\mathrm{s}}},\bb{Q}_\ell(1))\cong H^2_{\et}(X^\circ_{\overline{k}},\bb{Q}_\ell(1))$ to the $G_k$-invariant subspace is exactly a $G$-isomorphism $H^2_{\et}(X_{K^{\mathrm{s}}},\bb{Q}_\ell(1))^{G_k}\cong H^2_{\et}(X^\circ_{\overline{k}},\bb{Q}_\ell(1))^{G_k}$. Therefore, $(2)\implies (3)$.
    
    Lastly, let $\cal{L}$ be a polarization on $X$ fixed by $G$. We have
    \begin{align*}
        & H^2_{\et}(X_{\overline{k}}^\circ,\bb{Q}_\ell(1))^{G_k}\cong H^2_{\et}(X_{K^{\mathrm{s}}},\bb{Q}_\ell(1))^{G_k}\text{ as $G$-representations} \\ \iff & \beta_{\cal{L},\ell}\text{ is trivial} \iff \alpha_{\cal{L},\ell} \text{ is trivial} \iff \text{the $G$-action is extendable}
    \end{align*}
    by Proposition \ref{semi-linear case twist}, Lemma \ref{trivial kernel}, and Theorem \ref{first criterion}. Thus, $(1)\iff (3)$. This finishes the proof.
\end{proof}

In particular, this proves Theorem \ref{main theorem}.

\subsection{Proof of Theorem \ref{linear case main theorem}}

The proof of Theorem \ref{linear case main theorem} is completely analogous to that of Theorem \ref{refined main theorem}. Suppose the $G$-action on $X$ is $K$-linear. Then $G$ naturally acts on $X^\circ_{\overline{k}}$ and $\Aut_{\bb{Q}_\ell}(H^2_{\et}(X^\circ_{\overline{k}},\bb{Q}_\ell(1)))$.

Define $q_\ell$ as the composition of $i_\ell$ with the inclusion $\Aut_{G_k}(H^2_{\et}(X^\circ_{\overline{k}},\bb{Q}_{\ell}(1)))\to\Aut_{\bb{Q}_\ell}(H^2_{\et}(X^\circ_{\overline{k}},\bb{Q}_\ell(1)))$,
$$q_\ell:\cal{W}_{X,\cal{L}}\xrightarrow{i_\ell}\Aut_{G_k}(H^2_{\et}(X^\circ_{\overline{k}},\bb{Q}_{\ell}(1)))\to\Aut_{\bb{Q}_\ell}(H^2_{\et}(X^\circ_{\overline{k}},\bb{Q}_\ell(1))).$$
Then $q_\ell$ induces a pointed map
$${q_\ell}_*:H^1(G,\cal{W}_{X,\cal{L}})\to H^1(G,\Aut_{\bb{Q}_\ell}(H^2_{\et}(X^\circ_{\overline{k}},\bb{Q}_\ell(1)))).$$
Let $\gamma_{\cal{L},\ell}={q_\ell}_*(\alpha_{\cal{L},\ell})$.

\begin{lem}\label{trivial kernel linear case}
    The pointed map ${q_{\ell}}_*$ has trivial kernel.
\end{lem}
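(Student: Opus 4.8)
The statement to prove is that the pointed map ${q_\ell}_*$ has trivial kernel, which is the $K$-linear analogue of Lemma~\ref{trivial kernel}. The plan is to mimic the proof of Lemma~\ref{trivial kernel} almost verbatim, replacing the role of $\Aut(H^2_{\et}(X^\circ_{\overline{k}},\bb{Q}_\ell(1))^{G_k})$ with $\Aut_{\bb{Q}_\ell}(H^2_{\et}(X^\circ_{\overline{k}},\bb{Q}_\ell(1)))$ and keeping the subspace $\Lambda_{X,\cal{L},\bb{Q}_\ell}$ as the ``core'' on which $\cal{W}_{X,\cal{L}}$ acts faithfully. Concretely, I would again invoke \cite[Theorem~4.1]{Chiarellotto_2019_neron_ogg_shafarevich_criterion_k3surface} to know that the pointed map ${j'_\ell}_*:H^1(G,\cal{W}_{X,\cal{L}})\to H^1(G,\Aut(\Lambda_{X,\cal{L},\bb{Q}_\ell}))$ coming from the defining action $j'_\ell$ of the Weyl group on $\Lambda_{X,\cal{L},\bb{Q}_\ell}$ has trivial kernel, and then show that ${q_\ell}_*$ and ${j'_\ell}_*$ have the same kernel.

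For the comparison of kernels I would use the $G$-equivariant exact sequence of $\bb{Q}_\ell$-representations
$$0\to\Lambda_{X,\cal{L},\bb{Q}_\ell}\to H^2_{\et}(X^\circ_{\overline{k}},\bb{Q}_\ell(1))\to T'_\ell\to 0,$$
where $T'_\ell$ is the quotient and the first map is the cycle class map (which is $G$-equivariant because the $G$-action permutes the exceptional components of $P(X,\cal{L})_k$, as established for the polarization fixed by $G$). Here I use that, since the intersection form on $\Lambda_{X,\cal{L},\bb{Q}_\ell}$ is negative definite, one in fact has an internal orthogonal decomposition $H^2_{\et}(X^\circ_{\overline{k}},\bb{Q}_\ell(1))\cong\Lambda_{X,\cal{L},\bb{Q}_\ell}\oplus\Lambda_{X,\cal{L},\bb{Q}_\ell}^\perp$ of $\bb{Q}_\ell[G]$-modules (the complement being $G$-stable because the pairing is $G$-invariant), and that by construction of $i_\ell$ (hence of $q_\ell$) the Weyl group acts trivially on $\Lambda_{X,\cal{L},\bb{Q}_\ell}^\perp\cong T'_\ell$. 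Then for a class $\alpha\in H^1(G,\cal{W}_{X,\cal{L}})$, Lemma~\ref{twist a module} gives
$${j'_\ell}_*(\alpha)\text{ trivial}\iff\Lambda_{X,\cal{L},\bb{Q}_\ell}\cong\Lambda_{X,\cal{L},\bb{Q}_\ell}^\alpha,\qquad {q_\ell}_*(\alpha)\text{ trivial}\iff H^2_{\et}(X^\circ_{\overline{k}},\bb{Q}_\ell(1))\cong H^2_{\et}(X^\circ_{\overline{k}},\bb{Q}_\ell(1))^\alpha,$$
and since $T'^\alpha_\ell\cong T'_\ell$ always, the semisimplicity of $\bb{Q}_\ell$-representations of the finite group $G$ forces these two conditions to be equivalent. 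This shows $\ker {q_\ell}_*=\ker{j'_\ell}_*=\{*\}$.

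The only point requiring a little care is verifying that all the relevant maps — the cycle class inclusion, the splitting, and hence the identification $T'_\ell\cong\Lambda^\perp_{X,\cal{L},\bb{Q}_\ell}$ — are equivariant for the (honest, not merely semilinear) $G$-action in the $K$-linear setting; but this is immediate because a $K$-linear $G$-action fixing $\cal{L}$ specializes to a genuine $G$-action on $X^\circ$ permuting the components of $E_{X,\cal{L}}$, so everything in the argument of Lemma~\ref{trivial kernel} goes through with $\overline{G}_k$ replaced by $G$ and $H^2(\cdots)^{G_k}$ replaced by the full $H^2(\cdots)$. I do not anticipate any genuine obstacle here; the proof is a routine transcription of Lemma~\ref{trivial kernel}, and I would simply write ``the proof is identical to that of Lemma~\ref{trivial kernel}, with $H^2_{\et}(X^\circ_{\overline{k}},\bb{Q}_\ell(1))^{G_k}$ replaced by $H^2_{\et}(X^\circ_{\overline{k}},\bb{Q}_\ell(1))$.''
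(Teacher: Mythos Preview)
Your proposal is correct and follows essentially the same argument as the paper: both reduce to \cite[Theorem~4.1]{Chiarellotto_2019_neron_ogg_shafarevich_criterion_k3surface} for the map ${j'_\ell}_*$ on $\Lambda_{X,\cal{L},\bb{Q}_\ell}$, then use the short exact sequence with quotient on which $\cal{W}_{X,\cal{L}}$ acts trivially together with Lemma~\ref{twist a module} and semisimplicity of $\bb{Q}_\ell[G]$-modules to conclude that ${q_\ell}_*$ and ${j'_\ell}_*$ have the same kernel. Your extra remark that the orthogonal splitting is $G$-equivariant is a harmless elaboration of what the paper leaves implicit.
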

\begin{proof}
    Denote the action of $\cal{W}_{X,\cal{L}}$ on $\Lambda_{X,\cal{L},\bb{Q}_{\ell}}$ by $j'_\ell:\cal{W}_{X,\cal{L}}\to\Aut(\Lambda_{X,\cal{L},\bb{Q}_{\ell}})$. The kernel of ${j'_{\ell}}_*$ is trivial by \cite[Theorem 4.1]{Chiarellotto_2019_neron_ogg_shafarevich_criterion_k3surface}. It suffices to prove that ${q_\ell}_*$ and ${j'_\ell}_*$ have the same kernel.

    The cycle class map gives a $G$-injection 
    $\Lambda_{X,\cal{L},\bb{Q}_{\ell}}\to H^2_{\et}(X^\circ_{\overline{k}},\bb{Q}_\ell(1))$. Denote by $S_{\ell}$ the quotient and consider the short exact sequence of $G$-representations $$0\to\Lambda_{X,\cal{L},\bb{Q}_{\ell}}\to H^2_{\et}(X^\circ_{\overline{k}},\bb{Q}_\ell(1))\to S_{\ell}\to0.$$ Suppose $\alpha\in H^1(G,\cal{W}_{X,\cal{L}})$. By Proposition \ref{twist a module},
    $${j'_\ell}_*(\alpha)\text{ is trivial}\iff \Lambda_{X,\cal{L},\bb{Q}_{\ell}}\cong\Lambda_{X,\cal{L},\bb{Q}_{\ell}}^\alpha,$$
    $${q_\ell}_*(\alpha)\text{ is trivial}\iff H^2_{\et}(X^\circ_{\overline{k}},\bb{Q}_\ell(1))^{G_k}\cong H^2_{\et}(X^\circ_{\overline{k}},\bb{Q}_\ell(1))^{G_k,\alpha}.$$
    Next, $\cal{W}_{X,\cal{L}}$ acts trivially on $S_{\ell}$ by construction, so $S_{\ell}\cong S_{\ell}^\alpha$. Then we are done by the semi-simplicity of the representations of the finite group $G$.
\end{proof}

\begin{prop}\label{linear case twist}
    There is a $G$-isomorphism $$H^2_{\et}(X_{K^{\mathrm{s}}},\bb{Q}_\ell(1))\cong H^2_{\et}(X^\circ_{\overline{k}},\bb{Q}_\ell(1))^{\gamma_{\cal{L},\ell}}.$$
\end{prop}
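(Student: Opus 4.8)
The plan is to mimic the proof of Proposition \ref{semi-linear case twist} almost verbatim, replacing the semi-linear $\overline{G}_k$-action everywhere by the honest $G$-action that exists because the $G$-action on $X$ is now $K$-linear. Since $X$ has good reduction, the smooth base change theorem gives a comparison isomorphism
$$\phi:H^2_{\et}(X^\circ_{\overline{k}},\bb{Q}_\ell(1))\to H^2_{\et}(X_{K^{\mathrm{s}}},\bb{Q}_\ell(1))$$
which is $G_k$-equivariant. I would show that $\phi$ becomes a $G$-isomorphism once the source is twisted by $\gamma_{\cal{L},\ell}$. Fix an $\cal{L}$-terminal model $\cal{X}$ (using a polarization $\cal{L}$ fixed by $G$, which exists by the averaging remark), so that $\alpha_{\cal{L}}$ is represented by $\alpha_{\cal{L},\cal{X}}:g\mapsto s_{g_{\cal{X}}^\#}$, and hence $\gamma_{\cal{L},\ell}$ is represented by $g\mapsto i_\ell(s_{g_{\cal{X}}^\#})$.

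The key computation: for each $g\in G$ I need to verify the identity
$$\phi\circ\gamma_{\cal{L},\ell}(g)\circ g_k^{*}=g^{*}\circ\phi,$$
where $g^{*}$ and $g_k^{*}$ denote the pullback actions of $g$ on $H^2_{\et}(X_{K^{\mathrm{s}}},\bb{Q}_\ell(1))$ and $H^2_{\et}(X^\circ_{\overline{k}},\bb{Q}_\ell(1))$ respectively (these exist precisely because the action is $K$-linear, so there is no need for the lift $\overline{g}$ that appeared in Proposition \ref{semi-linear case twist}). By Proposition \ref{8.4} and Lemma \ref{generic pullback},
$$\gamma_{\cal{L},\ell}(g)=q_\ell(s_{g_{\cal{X}}^\#})=s_{g_{\cal{X}}^\#,\ell}=\phi^{-1}\circ(g^\#)^{*}\circ\phi\circ((g_k^\#)^{*})^{-1}.$$
In the $K$-linear case $\sigma_g=\mathrm{id}$, so $g^\#=g$ and $g_k^\#=g_k$; substituting, the required identity collapses to $\phi\circ\phi^{-1}\circ g^{*}\circ\phi\circ(g_k^{*})^{-1}\circ g_k^{*}=g^{*}\circ\phi$, which is a tautology. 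Thus $\phi$ descends to the claimed $G$-isomorphism $H^2_{\et}(X_{K^{\mathrm{s}}},\bb{Q}_\ell(1))\cong H^2_{\et}(X^\circ_{\overline{k}},\bb{Q}_\ell(1))^{\gamma_{\cal{L},\ell}}$.

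There is essentially no hard step here, since the $K$-linear hypothesis removes the semi-linear bookkeeping (the fibre-product groups and the square-commutativity check) that was the only real content of Proposition \ref{semi-linear case twist}. The one point worth being careful about is checking that $\gamma_{\cal{L},\ell}$, which a priori depends on the chosen $\cal{L}$-terminal model $\cal{X}$, is well defined as a cohomology class; but this is exactly the content of the lemma showing $\alpha_{\cal{L},\cal{X}}$ has a well-defined class $\alpha_{\cal{L},\ell}$ together with functoriality of ${q_\ell}_*$, so it is already available. I would therefore simply remark that the proof is formally identical to that of Proposition \ref{semi-linear case twist} with $\overline{G}_k$ replaced by $G$ and all lifts $\overline{g},\overline{\sigma}_g$ suppressed, and carry out the one-line substitution above.
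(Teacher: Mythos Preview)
Your proposal is correct and follows essentially the same approach as the paper: both use the comparison isomorphism $\phi$, reduce to the identity $\phi\circ\gamma_{\cal{L},\ell}(g)\circ g_k^{*}=g^{*}\circ\phi$, compute $\gamma_{\cal{L},\ell}(g)$ via Proposition~\ref{8.4} and Lemma~\ref{generic pullback}, and then observe that $g^\#=g$ in the $K$-linear case makes the identity immediate. The paper's proof is slightly terser but the logic is identical.
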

\begin{proof}
    Since $X$ has good reduction over $K$, we have an isomorphism
    $$\phi:H^2_{\et}(X^\circ_{\overline{k}},\bb{Q}_\ell(1))\to H^2_{\et}(X_{K^{\mathrm{s}}},\bb{Q}_\ell(1)).$$
    We claim that this gives a $G$-isomorphism if we twist $H^2_{\et}(X^\circ_{\overline{k}},\bb{Q}_\ell(1))^{G_k}$ by $\gamma_{\cal{L},\ell}$. This amounts to showing that
    \begin{align*}
        & \phi\circ\gamma_{\cal{L},\ell}(g)\circ g_k^*=g^*\circ\phi.
    \end{align*}
    This is much simpler than the proof of Proposition \ref{semi-linear case twist} since $g=g^\#$. By Proposition \ref{8.4} and Lemma \ref{generic pullback}, $$\gamma_{\cal{L},\ell}(g)=(i_\ell\circ\alpha_\cal{L})(g)=i_\ell(s_{g_{\cal{X}}})=s_{g_{\cal{X}},\ell}=\phi^{-1}\circ g^*\circ\phi\circ((g_k)^*)^{-1}.$$
    This finishes the proof.
\end{proof}

\begin{proof}[Proof of Theorem \ref{linear case main theorem}]
    Let $\cal{L}$ be a polarization on $X$ fixed by $G$. We have
    \begin{align*}
        & H^2_{\et}(X_{\overline{k}}^\circ,\bb{Q}_\ell)\cong H^2_{\et}(X_{K^{\text{s}}},\bb{Q}_\ell)\text{ as $G$-representations} \\ \iff & H^2_{\et}(X_{\overline{k}}^\circ,\bb{Q}_\ell(1))\cong H^2_{\et}(X_{K^{\text{s}}},\bb{Q}_\ell(1))\text{ as $G$-representations} \\ \iff & \gamma_{\cal{L},\ell}\text{ is trivial} \iff \alpha_{\cal{L},\ell} \text{ is trivial} \iff \text{the $G$-action is extendable}
    \end{align*}
    by Proposition \ref{linear case twist}, Lemma \ref{trivial kernel linear case}, and Theorem \ref{first criterion}.
\end{proof}

\section{Applications}\label{application}

In this section, we prove Corollary \ref{characteristic and extendability} and Corollary \ref{good reduction of enriques surfaces}.

\subsection{Extendability of symplectic group actions}

We prove Corollary \ref{characteristic and extendability} in this sub-section. Suppose $X$ is a K3 surface over $K$ with good reduction and $G$ is a finite group acting $K$-linearly on $X$.

\begin{lem}\label{order preserving}
    If $p\nmid |G|$, for every $g\in G$, $g$ and $g_k$ have the same order.
\end{lem}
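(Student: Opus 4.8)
The plan is to prove that the restriction of the specialization homomorphism $\mathrm{sp}\colon\Aut(X)\to\Aut(X^\circ)$ to $\langle g\rangle$ is injective. Since $\mathrm{sp}$ is a group homomorphism, $\mathrm{ord}(g_k)\mid\mathrm{ord}(g)$ automatically, so injectivity on $\langle g\rangle$ gives the reverse divisibility, hence equality. Writing $n=\mathrm{ord}(g_k)$ and $h=g^{n}$, one has $h_k=(g_k)^{n}=\mathrm{id}_{X^\circ}$ and $\mathrm{ord}(h)=\mathrm{ord}(g)/n$ divides $|G|$, hence is prime to $p$. So everything reduces to the claim: a $K$-linear automorphism $h$ of $X$ of order prime to $p$ with $h_k=\mathrm{id}$ is the identity.

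I would prove this by passing to a canonical integral model of $X$ over $\cal{O}_K$ on which $h$ genuinely acts. Choose a polarization $\cal{L}$ of $X$ fixed by $G$ (for instance $\bigotimes_{\gamma\in G}\gamma^{*}\cal{L}_{0}$) and form the RDP model $P=P(X,\cal{L})$ of Theorem \ref{RDP models}. Because $h^{*}\cal{L}\cong\cal{L}$, the functoriality of $P$ in the pair $(X,\cal{L})$ yields an $\cal{O}_K$-automorphism $H$ of $P$ with $H_K=h$; in particular $\mathrm{ord}(H)=\mathrm{ord}(h)$ is prime to $p$. Since $X^\circ\to P_k$ is the canonical minimal desingularization, hence dominant with $P_k$ reduced, and intertwines the induced actions by functoriality, the hypothesis $h_k=\mathrm{id}$ forces $H_k=\mathrm{id}_{P_k}$. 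Finally, $P$ is integral: being $\cal{O}_K$-flat with geometrically integral generic fibre $X$, it is torsion-free over the domain $\cal{O}_K$ with irreducible reduced generic fibre, hence irreducible and reduced.

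It then remains to prove a rigidity statement: an $\cal{O}_K$-automorphism $H$ of finite order prime to $p$ of an integral separated flat $\cal{O}_K$-scheme $P$ with $H_k=\mathrm{id}$ is the identity. I would test this at the generic point $\xi$ of the divisor $P_k=V(\pi)$, where $\cal{O}_{P,\xi}$ is a discrete valuation ring with uniformizer $\pi$ and residue field $\kappa(\xi)$ of characteristic $p$, and where $H$ acts trivially modulo $\pi$. A successive-approximation argument then closes the gap: if $H\equiv\mathrm{id}\pmod{\pi^{m}}$ for some $m\ge1$, then (using $H(\pi)=\pi$) the additive map $x\mapsto (H(x)-x)/\pi^{m}\bmod\pi$ descends to $\kappa(\xi)$, and $H^{\mathrm{ord}(H)}=\mathrm{id}$ together with $(H-\mathrm{id})^{2}(\cal{O}_{P,\xi})\subseteq\pi^{2m}\cal{O}_{P,\xi}$ forces $\mathrm{ord}(H)$ times this map to vanish; as $\mathrm{ord}(H)$ is invertible in $\kappa(\xi)$ it vanishes, i.e. $H\equiv\mathrm{id}\pmod{\pi^{m+1}}$. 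Inductively $H$ acts trivially on $\widehat{\cal{O}}_{P,\xi}$, hence on $\cal{O}_{P,\xi}$, so the fixed subscheme $P^{H}$ (closed, as $P$ is separated) contains a dense open around $\xi$; since $P$ is reduced and irreducible, $P^{H}=P$, so $H=\mathrm{id}$ and therefore $h=H_K=\mathrm{id}$.

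The step I expect to need the most care is the descent of $h_k=\mathrm{id}$ from the smooth surface $X^\circ$ to the possibly singular special fibre $P_k$, i.e. verifying that the functoriality of the RDP model is compatible with the specialization homomorphism $\mathrm{sp}$; the rigidity argument itself is elementary. Alternatively one can argue cohomologically: by Lemma \ref{generic pullback} the comparison isomorphism carries $h^{*}$ on $H^{2}_{\et}(X_{K^{\mathrm{s}}},\bb{Q}_{\ell})$ to $s_{h_{\cal{X}},\ell}$ once $h_k=\mathrm{id}$, after which one invokes \cite{matsumoto_2021_extendability_automorphisms_k3surfaces} to the effect that a tame automorphism acting trivially on $\ell$-adic $H^{2}$ is trivial; but this route still requires showing that operator is trivial, so I prefer the direct model-theoretic argument above.
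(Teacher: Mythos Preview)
Your argument is correct and takes a genuinely different route from the paper. The paper's proof is a one-line invocation of \cite[Proposition~6.1]{matsumoto_2021_extendability_automorphisms_k3surfaces}, which asserts directly that the ratio $\mathrm{ord}(g)/\mathrm{ord}(g_k)$ is always a power of $p$; Matsumoto establishes this via the flops between smooth models from \cite[Section~4]{liedtke_2017_good_reduction_k3surfaces}. You instead give a self-contained rigidity argument on the projective RDP model $P=P(X,\cal{L})$: the automorphism $h=g^{\mathrm{ord}(g_k)}$ extends to $H$ on $P$ by functoriality of $P$ in $(X,\cal{L})$, the hypothesis $h_k=\mathrm{id}$ descends to $H_k=\mathrm{id}$ via the minimal resolution, and successive approximation in the DVR $\cal{O}_{P,\xi}$ forces $H=\mathrm{id}$. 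This buys independence from the external reference at the cost of length; it is essentially the classical fact that a prime-to-$p$ order automorphism of a flat integral $\cal{O}_K$-scheme that is trivial on the special fibre must be trivial. The compatibility you flag does hold: choosing $\cal{X}$ to be $\cal{L}$-terminal, the construction of $P$ supplies an actual morphism $\mu\colon\cal{X}\to P$ whose special fibre is the resolution $X^\circ\to P_k$, and the identity $H\circ\mu=\mu\circ h_{\cal{X}}$ over the generic fibre then restricts as needed. One cosmetic point: the output of your DVR computation is really that the \emph{generic point of $P$} lies in $P^H$ (since $\mathrm{Frac}\,\cal{O}_{P,\xi}=K(P)$), rather than ``a dense open around $\xi$''; this is harmless since $P$ is integral.
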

\begin{proof}
    Suppose $g$ has order $n$ and $g_k$ has order $m$. The order of $g_k$ divides the order of $g$ so $n=mk$ for some integer $k$. By \cite[Proposition 6.1]{matsumoto_2021_extendability_automorphisms_k3surfaces}, $k$ is a power of $p$. Since $p\nmid |G|$, we have $k=1$ so $m=n$ as we want.
\end{proof}

\begin{lem}\label{symplecticness preserving}
    If $g$ is symplectic on $X$, so is $g_k$ on $X^\circ$.
\end{lem}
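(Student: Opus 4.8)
The plan is to compare the action of $g$ on global $2$-forms on $X$ with the action of $g_k$ on global $2$-forms on $X^\circ$ by realizing both as the two fibre restrictions of a single $\cal{O}_K$-linear action on the relative dualizing module of a smooth model, and then to use that a unit of $\cal{O}_K^\times$ reducing to $1$ on the generic fibre also reduces to $1$ on the special fibre.

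First I would fix a smooth model $\cal{X}$ of $X$ over $\cal{O}_K$. By the lemma on extendability of birational self-maps in Section \ref{specialization homomorphism subsection}, the $K$-linear automorphism $g$ extends to a morphism $g_U\colon U\to\cal{X}$ defined on an open subspace $U\subseteq\cal{X}$ whose complement is a finite union of curves on the special fibre; it is an open immersion onto an open subspace $U'$ of the same kind. Since $g$ commutes with the structure map $X\to\Spec K$ and the generic fibre is schematically dense in $\cal{X}$, the map $g_U$ commutes with $\cal{X}\to\Spec\cal{O}_K$, i.e.\ $g_U$ is $\cal{O}_K$-linear; and by the very construction of the specialization homomorphism, $g_k\in\Aut(X^\circ)$ is the automorphism of $X^\circ$ extending the restriction of $g_U$ to the open subset $U\cap X^\circ$ of $X^\circ$.

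Next I would consider $\omega:=\omega_{\cal{X}/\cal{O}_K}$, a line bundle on $\cal{X}$ whose restriction to the generic (resp.\ special) fibre is $\omega_X$ (resp.\ $\omega_{X^\circ}$), by compatibility of the dualizing sheaf of a smooth morphism with base change. As $X$ and $X^\circ$ are K3 surfaces, $h^0$ of the canonical bundle equals $1$ on both fibres, so by cohomology and base change over the reduced base $\Spec\cal{O}_K$ the module $M:=H^0(\cal{X},\omega)$ is free of rank $1$ over $\cal{O}_K$ and satisfies $M\otimes_{\cal{O}_K}K\cong H^0(X,\omega_X)$ and $M\otimes_{\cal{O}_K}k\cong H^0(X^\circ,\omega_{X^\circ})$. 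Because $\cal{X}$ is smooth, hence normal, and the complements $\cal{X}\setminus U$ and $\cal{X}\setminus U'$ have codimension $2$, restriction induces isomorphisms $M\xrightarrow{\sim}H^0(U,\omega|_U)$ and $M\xrightarrow{\sim}H^0(U',\omega|_{U'})$; combining these with the canonical isomorphism $g_U^*(\omega|_{U'})\cong\omega|_U$ coming from the $\cal{O}_K$-isomorphism $g_U\colon U\xrightarrow{\sim}U'$, the pullback $g_U^*$ induces an $\cal{O}_K$-linear automorphism of $M$, which must be multiplication by some unit $u\in\cal{O}_K^\times$.

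Finally, applying $-\otimes_{\cal{O}_K}K$ turns this automorphism into the action of $g$ on $H^0(X,\omega_X)$, which is the identity since $g$ is symplectic, whence $u=1$. Applying $-\otimes_{\cal{O}_K}k$ and using the same normality/codimension argument on $X^\circ$ together with the description of $g_k$ above, the reduced automorphism is the action of $g_k$ on $H^0(X^\circ,\omega_{X^\circ})$, which is therefore multiplication by $\bar u=1$; hence $g_k$ is symplectic. The points requiring care are the $\cal{O}_K$-linearity of the extension $g_U$ and the two Hartogs-type identifications that guarantee it is one and the same unit $u$ governing both fibres; the cohomology-and-base-change step is routine once one knows the special fibre is again a K3 surface.
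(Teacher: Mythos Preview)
Your proposal is correct and follows essentially the same approach as the paper: both arguments extend $g$ to an open $U\subset\cal{X}$ with codimension-$2$ complement, use the Hartogs/normality identification $H^0(U,\omega_{\cal{X}/\cal{O}_K}|_U)\cong H^0(\cal{X},\omega_{\cal{X}/\cal{O}_K})\cong\cal{O}_K$, and then compare the generic and special fibre actions through this rank-one $\cal{O}_K$-module. Your packaging via a single unit $u\in\cal{O}_K^\times$ and cohomology-and-base-change is slightly more streamlined than the paper's explicit diagram chase with restriction maps, but the underlying mechanism is identical.
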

\begin{proof}
    By \cite[Proposition 4.7]{liedtke_2017_good_reduction_k3surfaces}, there exists a smooth model $\cal{X}$ such that $g$ is defined over an open dense subset of $U\subset\cal{X}$ whose complement consists of finitely many curves on the special fibres. Denote the intersection of $U$ with $X^\circ$ as $V$, an open dense subset of $X^\circ$. Denote the relative canonical divisor of $U$ and $V$ respectively as $\omega_U$ and $\omega_V$.

    We make the following claims.
    \begin{enumerate}
        \item The restriction map $H^0(X^\circ,\omega_{X^\circ})\to H^0(V,\omega_V)$ is injective.
        \item The restriction map $H^0(U,\omega_U)\to H^0(X,\omega_X)$ is injective.
        \item The image of the restriction map $H^0(U,\omega_U)\to H^0(V,\omega_V)$ equals the image of $H^0(X^\circ,\omega_{X^\circ})\to H^0(V,\omega_V)$.
    \end{enumerate}
    Indeed, since $X^\circ$ is integral and $\omega_{X^\circ}$ is trivial, (1) holds. Also, $U$ is isomorphic to $\cal{X}$ away from codimension $2$, so $H^0(U,\omega_U)=H^0(\cal{X},\omega_{\cal{X}})=\cal{O}_K$, which is torsion-free. Since the restriction map in (2) is the base change to from $\cal{O}_K$ to $K$, it is injective. Finally, we have a commutative diagram
    $$\xymatrix{ H^0(\cal{X},\omega_{\cal{X}}) \ar@{->}[r] \ar@{->}[d] & H^0(X^\circ,\omega_{X^\circ}) \ar@{->}[d] \\ H^0(U,\omega_U) \ar@{->}[r] & H^0(V,\omega_V)}  $$ The vertical map on the left is an isomorphism. The horizontal map on the top corresponds to the quotient homomorphism $\cal{O}_K\to k$, therefore surjective. Then (3) holds by diagram chasing.

    Suppose $g$ acts trivially on $H^0(X,\omega_X)$. By (2), $g$ also acts trivially on $H^0(U,\omega_U)$ and thus on the image of $H^0(U,\omega_U)\to H^0(V,\omega_V)$. Then by (3), $g_k$ acts trivially on the image of $H^0(X^\circ,\omega_{X^\circ})\to H^0(V,\omega_V)$. Finally, by (1), $g_k$ acts trivially on $H^0(X^\circ,\omega_{X^\circ})$. Therefore, $g_k$ is symplectic on $X^\circ$.
\end{proof}

\begin{proof}[Proof of Corollary \ref{characteristic and extendability}]
    By Theorem \ref{linear case main theorem}, we need to prove that $H^2_{\et}(X_{K^{\mathrm{s}}},\bb{Q}_\ell)$ and $H^2_{\et}(X^\circ_{\overline{k}},\bb{Q}_\ell)$ are isomorphic as $G$-representations. Since $\bb{Q}_\ell$ has characteristic $0$, it suffices to compare their characters. Equivalently, we prove that for all $g\in G$, $g$ and $g_k$ have the same characteristic polynomial on $H^2_{\et}(X_{K^{\mathrm{s}}},\bb{Q}_\ell)$ and $H^2_{\et}(X^\circ_{\overline{k}},\bb{Q}_\ell)$ respectively.
    
    First, $g$ and therefore $g_k$ are symplectic by Lemma \ref{symplecticness preserving}. Then by \cite[Lemma 2.13]{matsumoto_2021_extendability_automorphisms_k3surfaces}, the characteristic polynomials of $g$ and $g_k$ only depend on their orders. Finally, by Lemma \ref{order preserving}, $g$ and $g_k$ have the same order. This finishes the proof.
\end{proof}

\subsection{Good reduction of Enriques surfaces}

We prove Corollary \ref{good reduction of enriques surfaces} in this sub-section. First, we recall some facts about Enriques surfaces.

Suppose the ground field $F$ has characteristic different from $2$. Then every K3 surface over $F$ quotient by an involution without fixed points is an Enriques surface by \cite[Proposition 10.15]{algebraic_surface}.

Conversely, every Enriques surface over $F$ is the quotient of a K3 double cover by a fixed-point-free involution. The construction is as follows. Suppose $Y$ is an Enriques surface. We choose an isomorphism $\omega_Y^{\otimes 2}\cong\cal{O}_Y$ and form the $\cal{O}_Y$-algebra $\cal{O}_Y\oplus\omega_Y$. We take the K3 double cover to be its relative spectrum $$X=\underline{\Spec}_Y(\cal{O}_Y\oplus\omega_Y)$$ and the associated involution $\iota$ is the unique involution that acts as the identity on $\cal{O}_Y$ and the $(-1)$-map on $\omega_Y$. This is a K3 surface by the proof of \cite[Proposition 10.14]{algebraic_surface}.

In fact, this construction can be done in families.

\begin{lem}\label{k3 double cover in family}
    Assume $p\neq 2$. Let $Y$ be an Enriques surface over $K$ with good reduction. Let $\cal{Y}$ be a smooth model of $Y$ over $\cal{O}_K$. Then there exists a smooth proper algebraic space $\cal{X}$ over $\cal{O}_K$ with an involution $\iota$ such that $\cal{X}_K$ and $\cal{X}_k$ are respectively K3 double covers of $Y$ and $\cal{Y}_k$ with associated involutions $\iota_K$ and $\iota_k$.
\end{lem}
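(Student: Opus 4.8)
The statement I want to prove is Lemma \ref{k3 double cover in family}: given an Enriques surface $Y/K$ with good reduction and a smooth model $\cal{Y}/\cal{O}_K$, I want to construct a smooth proper algebraic space $\cal{X}/\cal{O}_K$ with an involution $\iota$ whose generic and special fibres are the K3 double covers of $Y$ and $\cal{Y}_k$. The natural strategy is to mimic the classical construction of the K3 double cover \emph{relatively over} $\cal{O}_K$, i.e. to build $\cal{X}$ as a relative spectrum of an $\cal{O}_{\cal{Y}}$-algebra of the form $\cal{O}_{\cal{Y}}\oplus\omega$ for a suitable invertible sheaf $\omega$, and then check smoothness and properness fibrewise. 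So first I would identify the correct $\omega$: the relative dualising sheaf $\omega_{\cal{Y}/\cal{O}_K}$. Since $\cal{Y}\to\Spec\cal{O}_K$ is smooth proper with fibres Enriques surfaces, $\omega_{\cal{Y}/\cal{O}_K}$ is an invertible sheaf whose restriction to each fibre is the canonical sheaf of that fibre; in particular $\omega_{\cal{Y}/\cal{O}_K}^{\otimes 2}$ restricts to $\cal{O}$ on both fibres.

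The next step, and the one I expect to be the crux, is to produce a trivialisation $\omega_{\cal{Y}/\cal{O}_K}^{\otimes 2}\cong\cal{O}_{\cal{Y}}$ over all of $\cal{Y}$ (not just on the fibres), so that $\cal{O}_{\cal{Y}}\oplus\omega_{\cal{Y}/\cal{O}_K}$ acquires an $\cal{O}_{\cal{Y}}$-algebra structure (multiplication $\omega\otimes\omega\to\cal{O}$ given by this trivialisation) together with the involution $\iota$ acting as $+1$ on $\cal{O}_{\cal{Y}}$ and $-1$ on $\omega_{\cal{Y}/\cal{O}_K}$. The obstruction to such a trivialisation lies in the order-two part of $\Pic(\cal{Y})$; I would argue it vanishes by a specialization/deformation argument. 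Concretely: $H^1(\cal{Y}_k,\cal{O}_{\cal{Y}_k})=0$ for an Enriques surface in characteristic $\neq 2$ and $H^2(\cal{Y}_k,\cal{O}_{\cal{Y}_k})=0$ as well (Enriques surfaces have $p_g=0$ in char $\neq 2$), so by cohomology-and-base-change and the formal-functions/Grothendieck existence machinery the restriction map $\Pic(\cal{Y})\to\Pic(\cal{Y}_k)$ is injective (indeed an isomorphism on torsion), and the class of $\omega_{\cal{Y}/\cal{O}_K}^{\otimes 2}$ is trivial since it is trivial on the special fibre. Alternatively one can invoke that $\cal{Y}\to\Spec\cal{O}_K$ has a section or simply that $\Pic(\Spec\cal{O}_K)=0$ together with properness to descend the fibrewise trivialisation; I would use whichever is cleanest. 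Once the trivialisation is fixed (it is unique up to a unit, hence the pair $(\cal{X},\iota)$ is well-defined up to unique isomorphism), set
$$\cal{X}=\underline{\Spec}_{\cal{Y}}\bigl(\cal{O}_{\cal{Y}}\oplus\omega_{\cal{Y}/\cal{O}_K}\bigr).$$

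Finally I would verify the required properties of $\cal{X}$. Properness of $\cal{X}\to\Spec\cal{O}_K$ is immediate: $\cal{X}\to\cal{Y}$ is finite (affine and finite over the base sheaf of algebras), hence proper, and $\cal{Y}\to\Spec\cal{O}_K$ is proper. For flatness and smoothness I use the fibrewise criterion: $\cal{X}$ is flat over $\cal{O}_K$ because $\cal{O}_{\cal{Y}}\oplus\omega_{\cal{Y}/\cal{O}_K}$ is a locally free $\cal{O}_{\cal{Y}}$-module and $\cal{Y}$ is flat over $\cal{O}_K$, and then $\cal{X}\to\Spec\cal{O}_K$ is smooth provided each fibre $\cal{X}_K$, $\cal{X}_k$ is smooth over the corresponding residue field. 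But $\cal{X}_K=\underline{\Spec}_{Y}(\cal{O}_Y\oplus\omega_Y)$ and $\cal{X}_k=\underline{\Spec}_{\cal{Y}_k}(\cal{O}_{\cal{Y}_k}\oplus\omega_{\cal{Y}_k})$ are exactly the classical K3 double covers (the trivialisation restricts to a trivialisation on each fibre, and the residue characteristic is $\neq 2$), which are smooth K3 surfaces by the cited results on the construction of K3 double covers (\cite[Proposition 10.14]{algebraic_surface}). The involution $\iota$ on $\cal{X}$ restricts on each fibre to the associated involution, which is fixed-point-free on $\cal{X}_K$ and on $\cal{X}_k$ because the quotients $Y$ and $\cal{Y}_k$ are genuine Enriques surfaces (the double cover being étale is equivalent, in char $\neq 2$, to $\omega$ having no nonzero global section squaring to zero — here it is the canonical sheaf of an Enriques surface, which is $2$-torsion but nontrivial). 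This completes the construction; the only genuinely delicate point is the integral trivialisation of $\omega_{\cal{Y}/\cal{O}_K}^{\otimes 2}$, and everything else is a routine fibrewise check.
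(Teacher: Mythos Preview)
Your proposal is correct and follows essentially the same route as the paper: form $\cal{X}=\underline{\Spec}_{\cal{Y}}(\cal{O}_{\cal{Y}}\oplus\omega_{\cal{Y}/\cal{O}_K})$ using a global trivialisation of $\omega_{\cal{Y}/\cal{O}_K}^{\otimes 2}$, then check properness, flatness, and smoothness fibrewise. The only cosmetic difference is in how the trivialisation is obtained: the paper uses that restriction to the \emph{generic} fibre gives an isomorphism $\Pic(\cal{Y})\xrightarrow{\sim}\Pic(Y)$, whereas you argue via injectivity of restriction to the \emph{special} fibre using $H^1(\cal{Y}_k,\cal{O})=H^2(\cal{Y}_k,\cal{O})=0$; both are standard and lead to the same conclusion.
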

\begin{proof}
    We mimic the construction of a K3 double cover of an Enriques surface. First, since $Y$ is an Enriques surface, $\omega_{Y}$ is nontrivial but $\omega_{Y}^{\otimes 2}\cong \cal{O}_{Y}$. The pullback induces an isomorphism $\Pic(\cal{Y})\to\Pic(Y)$ that maps $\omega_{\cal{Y}}$ to $\omega_{Y}$. Therefore, $\omega_{\cal{Y}}$ is nontrivial but $\omega_{\cal{Y}}^{\otimes 2}\cong \cal{O}_{\cal{Y}}$. We choose such an isomorphism and construct the $\cal{O}_{\cal{Y}}$-algebra $\cal{O}_{\cal{Y}}\oplus\omega_{\cal{Y}}$. Let $$\cal{X}=\underline{\Spec}_{\cal{Y}}(\cal{O}_{\cal{Y}}\oplus\omega_{\cal{Y}})$$ and $\iota$ be the unique involution that acts as the identity on $\cal{O}_{\cal{Y}}$ and the $(-1)$-map on $\omega_{\cal{Y}}$.

    Since taking the relative spectrum commutes with base change, $\cal{X}_K$ and $\cal{X}_k$ are respectively the K3 double covers of $Y$ and $\cal{Y}_k$ determined by the base change of the isomorphism $\omega_{\cal{Y}}^{\otimes 2}\cong \cal{O}_{\cal{Y}}$ to $K$ and $k$. Also, $Y=\cal{X}_K/\iota_K$ and $\cal{Y}_k=\cal{X}_k/\iota_k$. By construction, $\cal{X}$ is proper and flat. It remains to show that $\cal{X}$ is smooth. Indeed, since $\cal{X}_K$ and $\cal{X}_k$ are smooth, $\cal{X}$ is smooth by flatness.
\end{proof}

Now we prove Corollary \ref{good reduction of enriques surfaces}.

\begin{proof}[Proof of Corollary \ref{good reduction of enriques surfaces}]
    Suppose $Y$ has good reduction with a smooth model $\cal{Y}$. Apply Lemma \ref{k3 double cover in family} to $Y$ and $\cal{Y}$. Let $\cal{X}$ and $\iota$ be as in Lemma \ref{k3 double cover in family}. Then $\cal{X}_K$ is a K3 double cover of $Y$ with associated involution $\iota_K$. Its specialization $\iota_k$ has no fixed points since $\cal{Y}_k=\cal{X}_k/\iota_k$ is an Enriques surface. This proves $(1)\implies (2)$. 
    
    Also, suppose $X$ is a K3 double cover of $Y$, with associated involution $\iota$, such that $X$ has good reduction and $\iota_k$ has no fixed points. Then $X^\circ/\iota_k$ is an Enriques surface. By \cite[Section 10.10]{algebraic_surface},
    $$\dim H^2_{\et}(X^\circ_{\overline{k}},\bb{Q}_\ell)^{\iota_k=\mathrm{id}}=\dim H^2_{\et}((X^\circ/\iota_k)_{\overline{k}},\bb{Q}_\ell)=10.$$
    This proves that $(2)\implies (3)$.

    It remains to prove that $(3)\implies (1)$. Suppose $X$ is a K3 double cover of $Y$, with associated involution $\iota$, such that $X$ has good reduction and $\dim H^2_{\et}(X^\circ_{\overline{k}},\bb{Q}_\ell)^{\iota_k}=10.$ We claim that $\iota$ is extendable. Indeed, let $G=\langle\iota\rangle\cong\bb{Z}/2$. By Theorem \ref{linear case main theorem}, it suffices to prove that there is a $G$-isomorphism $$H^2_{\et}(X_{K^{\mathrm{s}}},\bb{Q}_\ell)\cong H^2_{\et}(X^\circ_{\overline{k}},\bb{Q}_\ell).$$ Every finite dimensional representation of $G$ decomposes as the direct sum of the subspace fixed by $\iota$ and the subspace where $\iota$ acts as the $(-1)$-map. Since $H^2_{\et}(X_{K^{\mathrm{s}}},\bb{Q}_\ell)$ and $ H^2_{\et}(X^\circ_{\overline{k}},\bb{Q}_\ell)$ both have dimension 22, it suffices to show that
    \begin{equation}\label{dim argument}
        \dim H^2_{\et}(X_{K^{\mathrm{s}}},\bb{Q}_\ell)^{\iota=\mathrm{id}}=\dim H^2_{\et}(X^\circ_{\overline{k}},\bb{Q}_\ell)^{\iota=\mathrm{id}}.
    \end{equation}
    Since $Y$ is an Enriques surface, by \cite[Section 10.10]{algebraic_surface}, the LHS of \ref{dim argument} is $$\dim H^2_{\et}(X_{K^{\mathrm{s}}},\bb{Q}_\ell)^{\iota=\mathrm{id}} = \dim H^2_{\et}(Y_{K^{\mathrm{s}}},\bb{Q}_\ell)=10.$$ On the other hand, the RHS of \ref{dim argument} is $10$ by assumption. This shows that $\iota$ is extendable.
    
    Let $\cal{X}$ be a smooth model of $X$ to which $\iota$ extends. We want to show that $\cal{Y}=\cal{X}/\iota$ is a smooth model of $Y$. By construction, $\cal{Y}$ is a model of $Y$. We need to prove that $\cal{Y}$ is smooth. By flatness, it suffices to prove that $\cal{Y}_k$ is smooth.
    
    Since $p\neq 2$, we have $\cal{Y}_k\cong X^\circ/{\iota_k}$. We claim that $\iota_k$ is non-symplectic. Suppose $\iota_k$ is symplectic. By \cite[Proposition 3.11, Chapter 15]{Huybrechts_2016}, $\cal{Y}_k$ is a surface with at worst RDPs whose minimal desingularization is a K3 surface. In particular, the Euler characteristic of $\cal{Y}_k$ is $2$. However, the Euler characteristic of $\cal{Y}_k$ should agree with that of $Y$, which is $1$. This is a contradiction, so $\iota_k$ is non-symplectic.
    
    Then by the classification in \cite[Remark 4.5.4]{Nikulin1983}, $\cal{Y}_k$ is either a smooth rational surface or an Enriques surface (in fact, it must be an Enriques surface but we don't need this). In particular, $\cal{Y}_k$ is smooth as we want. This shows that $(5)\implies (1)$ and finishes the proof.
\end{proof}

\printbibliography

@article{matsumoto_2021_extendability_automorphisms_k3surfaces,
  title={Extendability of automorphisms of K3 surfaces},
  author={Yuya Matsumoto},
  journal={Mathematical Research Letters},
  year={2016},
  url={https://api.semanticscholar.org/CorpusID:50351076}
}

@article{liedtke_2017_good_reduction_k3surfaces,
   title={Good reduction of K3 surfaces},
   volume={154},
   ISSN={1570-5846},
   url={http://dx.doi.org/10.1112/S0010437X17007400},
   DOI={10.1112/s0010437x17007400},
   number={1},
   journal={Compositio Mathematica},
   publisher={Wiley},
   author={Liedtke, Christian and Matsumoto, Yuya},
   year={2017},
   month=sep, pages={1–35} }

@article{Chiarellotto_2019_neron_ogg_shafarevich_criterion_k3surface,
   title={A Néron–Ogg–Shafarevich criterion for K3 surfaces},
   volume={119},
   ISSN={1460-244X},
   url={http://dx.doi.org/10.1112/plms.12237},
   DOI={10.1112/plms.12237},
   number={2},
   journal={Proceedings of the London Mathematical Society},
   publisher={Wiley},
   author={Chiarellotto, Bruno and Lazda, Christopher and Liedtke, Christian},
   year={2019},
   month=feb, pages={469–514} }

@book{Huybrechts_2016, place={Cambridge}, series={Cambridge Studies in Advanced Mathematics}, title={Lectures on K3 Surfaces}, publisher={Cambridge University Press}, author={Huybrechts, Daniel}, year={2016}, collection={Cambridge Studies in Advanced Mathematics}}

@book{algebraic_surface,
author = {Badescu, Lucian},
year = {2001},
month = {01},
pages = {},
title = {Algebraic Surfaces},
isbn = {978-1-4419-3149-8},
doi = {10.1007/978-1-4757-3512-3}
}

@article{automorphism_of_lattices,
 ISSN = {03874982},
 URL = {http://www.jstor.org/stable/43685410},
 author = {Zenji Kobayashi and Jun Morita},
 journal = {Tsukuba Journal of Mathematics},
 number = {2},
 pages = {323--336},
 publisher = {Editorial Committee of Tsukuba Journal of Mathematics},
 title = {AUTOMORPHISMS OF CERTAIN ROOT LATTICES},
 urldate = {2025-03-11},
 volume = {7},
 year = {1983}
}

@article{Matsumoto_Shioda-Inose,
   title={On good reduction of some K3 surfaces related to abelian surfaces},
   volume={67},
   ISSN={0040-8735},
   url={http://dx.doi.org/10.2748/tmj/1429549580},
   DOI={10.2748/tmj/1429549580},
   number={1},
   journal={Tohoku Mathematical Journal},
   publisher={Mathematical Institute, Tohoku University},
   author={Matsumoto, Yuya},
   year={2015},
   month=jan }

@article{Nikulin1983,
    author = {Nikulin, V. V.},
    title = {Factor groups of groups of automorphisms of hyperbolic forms with respect to subgroups generated by 2-reflections. {Algebrogeometric} applications},
    journal = {Journal of Mathematical Sciences},
    volume = {22},
    pages = {1401--1475},
    year = {1983},
    doi = {10.1007/BF01094757},
    url = {https://doi.org/10.1007/BF01094757}
}

@article{artin_dilatation,
 ISSN = {0003486X, 19398980},
 URL = {http://www.jstor.org/stable/1970602},
 author = {M. Artin},
 journal = {Annals of Mathematics},
 number = {1},
 pages = {88--135},
 publisher = {[Annals of Mathematics, Trustees of Princeton University on Behalf of the Annals of Mathematics, Mathematics Department, Princeton University]},
 title = {Algebraization of Formal Moduli: II. Existence of Modifications},
 urldate = {2025-10-21},
 volume = {91},
 year = {1970}
}

@BOOK{SerreGaloisCohomology,
    AUTHOR = "Serre, Jean-Pierre",
    TITLE = "Galois Cohomology",
    PUBLISHER = "Springer-Verlag",
    YEAR = "1997",
    SERIES = "Springer Monographs in Mathematics"
}

@Inbook{Neron_model_Artin,
author="Artin, M.",
editor="Cornell, Gary
and Silverman, Joseph H.",
title="N{\'e}ron Models",
bookTitle="Arithmetic Geometry",
year="1986",
publisher="Springer New York",
address="New York, NY",
pages="213--230",
abstract="This is an exposition of the main theorem of N{\'e}ron's paper [2], and of Raynaud's subsequent work on the problem.",
isbn="978-1-4613-8655-1",
doi="10.1007/978-1-4613-8655-1_8",
url="https://doi.org/10.1007/978-1-4613-8655-1_8"
}

@book{Etale_cohomology_milne_book,
 URL = {http://www.jstor.org/stable/j.ctt1bpmbk1},
 author = {J. S. Milne},
 publisher = {Princeton University Press},
 title = {Etale Cohomology (PMS-33)},
 urldate = {2025-11-13},
 year = {1980}
}

\end{document}